\documentclass[11pt]{article}
\usepackage[a4paper,margin=1in]{geometry}
\usepackage[T1]{fontenc}
\usepackage[utf8]{inputenc}
\usepackage{lmodern}
\usepackage{microtype}
\usepackage{amsmath,amssymb,amsthm,mathtools}
\usepackage{enumitem}
\usepackage{mathrsfs}
\usepackage[colorlinks=true,linkcolor=blue,citecolor=blue,urlcolor=blue]{hyperref}
\newcommand{\ind}[1]{\mathbf 1_{\{#1\}}}
\allowdisplaybreaks

\theoremstyle{plain}
\newtheorem{theorem}{Theorem}[section]
\newtheorem{lemma}[theorem]{Lemma}

\newtheorem{conjecture}[theorem]{Conjecture}

\theoremstyle{definition}

\theoremstyle{remark}
\newtheorem{remark}[theorem]{Remark}

\DeclareMathOperator{\tr}{tr}

\newcommand{\eps}{\varepsilon}
\newcommand{\R}{\mathbb R}
\newcommand{\one}{\mathbf 1}

\title{
	A Matching-Number Refinement of Brouwer's Laplacian Eigenvalue Inequality\thanks{
		Email addresses: \href{mailto:jhuangmath@gzhu.edu.cn}{\texttt{jhuangmath@gzhu.edu.cn}},
		\href{mailto:chunyan_qin@126.com}{\texttt{chunyan\_qin@126.com}}.
	}
}
\author{
	Jing Huang\textsuperscript{a}
\quad
	Chunyan Qin\textsuperscript{b}
	\\[1mm]
	\textsuperscript{a}School of Mathematics and Information Science, Guangzhou University,\\
	Guangzhou 510006, China\\
	\textsuperscript{b}School of Mathematics, South China University of Technology,\\
	Guangzhou 510641, China
}
\date{}

\begin{document}
	\maketitle
	
\begin{abstract}
	Let $G=(V,E)$ be a finite simple graph with Laplacian eigenvalues
	$\lambda_1(L(G))\ge\cdots\ge\lambda_{|V|}(L(G))$, and define
	\[
	\eps_k(G)=
	\sum_{j=1}^{\min\{k,|V|\}}\lambda_j(L(G))-|E|.
	\]
	Let $\nu(G)$ be the matching number of $G$, and let $n(G)$ be the number of
	non-isolated vertices of $G$. Lew proved that
	\(\eps_k(G)\le k\nu(G)+\lfloor k/2\rfloor\), and conjectured that the
	additive term can be removed in the non-endpoint range. We prove this
	conjecture:
	\[
	\eps_k(G)\le k\nu(G)
	\qquad
	(1\le k\le n(G)-2).
	\]
	We also characterize  all equality cases. Up to isolated vertices, equality
	holds precisely for stars, for \(K_1\vee(K_k\cup\overline{K_{n-k-1}})\) with
	\(k\) odd, and for \(K_n-E(K_{1,t})\) with \(n\) odd, \(k=n-2\), and
	\(1\le t\le n-2\). We also analyze the endpoint range \(k\ge n(G)-1\), where
	\(\eps_k(G)=|E|\), and determine the specific cases where the inequality 
\(\varepsilon_k(G)\le k\nu(G)\) fails or holds with equality.
\end{abstract}

	\medskip
	\noindent\textbf{2020 Mathematics Subject Classification.} 05C50, 05C70, 15A18.
	
	\noindent\textbf{Keywords.} Laplacian eigenvalues, matching number, Brouwer's conjecture, Tutte--Berge formula, spectral graph theory.

\section{Introduction}\label{sec:introduction}

Let \(G=(V,E)\) be a finite simple graph with \(e(G)=|E|\). Its Laplacian matrix is
\(L(G)=D(G)-A(G)\), where \(D(G)\) is the diagonal degree matrix and \(A(G)\) is
the adjacency matrix. We write the Laplacian eigenvalues in non-increasing order
as \(\lambda_1(L(G))\ge\lambda_2(L(G))\ge\cdots\ge\lambda_{|V|}(L(G))\). The
Laplacian spectrum, counted with multiplicities, is denoted by
\(\operatorname{Spec}_L(G)\), and \(\lambda^{[m]}\) means that the eigenvalue
\(\lambda\) has multiplicity \(m\). For \(k\ge0\), define
\[
\eps_k(G)=
\sum_{j=1}^{\min\{k,|V|\}}\lambda_j(L(G))-e(G),
\]
with the convention that the empty sum is \(0\). Since the sum of all Laplacian
eigenvalues is \(2e(G)\), the quantity \(\eps_k(G)\) records the excess of the
sum of the \(k\) largest Laplacian eigenvalues over the edge count.

Let \(V_+(G)=\{v\in V:\deg_G(v)>0\}\), let \(G^+=G[V_+(G)]\), and set
\(n(G)=|V_+(G)|\). Adding or deleting isolated vertices does not change
\(e(G)\), the nonzero Laplacian eigenvalues, or the matching number. Thus
\(n(G)\), rather than \(|V|\), is the relevant order parameter here. We write
\(n=n(G)\) and \(\nu=\nu(G)\), where \(\nu(G)\) is the matching number of \(G\).

A basic problem in spectral graph theory is to relate partial sums of Laplacian
eigenvalues to combinatorial parameters of the underlying graph. Brouwer's
Laplacian eigenvalue conjecture \cite{BrouwerHaemers} asserts that every finite
simple graph satisfies
\[
\eps_k(G)\le \binom{k+1}{2}
\qquad
(1\le k\le |V(G)|).
\]
The
conjecture was verified in several cases, including trees, random graphs, and
various ranges of \(k\)
\cite{BerndsenThesis,CooperConstraints,FritscherTree,HMT10,RochaRandom}, and was
recently proved in full by Kothari and Tudose \cite{KothariTudoseBrouwer}.
Related work includes refinements involving degree sums, clique number, critical
indices, product graphs, and equality cases for Brouwer-type inequalities
\cite{ChenZiFull,GaniePirzadaRatherTrevisan,GaniePirzadaTrevisanClique,
	LewApprox,LewDegreeSums,LiGuoFull,TorresTrevisanProduct,TorresTrevisanCritical}.

Brouwer's bound is universal. To obtain estimates that depend on the edge
structure of \(G\), one may ask for bounds in terms of covering or matching
parameters. Das, Mojallal, and Gutman \cite{DasMojallalGutman} proved that
\(\eps_k(G)\le k\tau(G)\), where \(\tau(G)\) is the vertex-cover number. Since
the endpoints of a maximal matching form a vertex cover, \(\tau(G)\le2\nu(G)\).
This gives a matching-number bound, but with an additional factor of two.
It is therefore natural to ask whether \(\nu(G)\) itself gives the correct
linear coefficient. Lew \cite{LewPartition}, using partition density and star
arboricity, proved
\[
\eps_k(G)
\le
k\nu(G)+\left\lfloor\frac{k}{2}\right\rfloor
\qquad
(1\le k\le |V(G)|).
\]
He conjectured that the additive term can be removed in the non-endpoint range.

\begin{conjecture}[\textup{\cite[Conjecture~5.1]{LewPartition}}]
	\label{conj:lew}
Let $G$ be a finite simple graph with $n$ non-isolated vertices.  If $1\le k\le n-2$, then
	$\eps_k(G)\le k\nu.$
\end{conjecture}

The condition \(k\le n-2\) is part of the formulation of
Conjecture~\ref{conj:lew}. If \(G\) has at least one edge, then \(G^+\) has at
most \(n-1\) nonzero Laplacian eigenvalues, and hence
$\eps_k(G)=e(G)$ when \(k\ge n-1\).
Thus, in the endpoint range, the inequality \(\eps_k(G)\le k\nu(G)\) becomes an
extremal comparison between \(e(G)\) and \(k\nu(G)\). This comparison already
fails in general at \(k=n-1\), as noted by Lew \cite{LewPartition}. Therefore
the range \(1\le k\le n-2\) is the natural range for the conjectured spectral
inequality, while the endpoint range requires a separate treatment. One purpose
of Theorem~\ref{thm:endpoint-extension} below is to determine exactly when these
endpoint failures occur.

Our first main result proves Conjecture~\ref{conj:lew} and determines the
equality cases. For integers \(k,s\ge1\), set
\(F_{k,s}=K_1\vee(K_k\cup\overline{K_s})\). For \(n\ge3\) and
\(1\le t\le n-2\), set \(D_{n,t}=K_n-E(K_{1,t})\), the graph obtained from
\(K_n\) by deleting \(t\) edges incident with a common vertex.

\begin{theorem}
	\label{thm:main}
	Let $G$ be a finite simple graph with $n$ non-isolated vertices.
	If $1\le k\le n-2$, then $
	\eps_k(G)\le k\nu.$
	Moreover, equality holds if and only if $G^+$ is one of the following
	graphs:
	\begin{enumerate}[label=\textup{(\roman*)}]
		\item the star $K_{1,n-1}$;
		\item $F_{k,n-k-1}$, where $k$ is odd;
		\item $D_{n,t}$, where $n$ is odd, $k=n-2$, and \(1\le t\le n-2\).
	\end{enumerate}
\end{theorem}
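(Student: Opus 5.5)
The plan is to use the Ky Fan variational characterization: for any $k$-dimensional subspace, or equivalently any positive semidefinite $X$ with $0\preceq X\preceq I$ and $\tr X=k$, we have $\sum_{j\le k}\lambda_j(L(G))=\max_X \tr(L X)$. For such an $X$,
\[
\tr(LX)=\sum_{uv\in E}(X_{uu}+X_{vv}-2X_{uv}),
\]
so that
\[
\eps_k(G)=\max_X\sum_{uv\in E}\bigl(X_{uu}+X_{vv}-2X_{uv}-1\bigr).
\]
Each edge term is bounded using $X_{uu}+X_{vv}-2X_{uv}=\|x_u-x_v\|^2$, where $X=\mathrm{Gram}(x_v)$ and $\|x_v\|^2\le 1$.

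We then invoke the Tutte--Berge formula. Choose $S\subseteq V$ with
\[
\nu=\tfrac12\bigl(n-|S|\cdot 0+|S|-\mathrm{odd}(G-S)\bigr),
\]
that is, $\nu=|S|+\sum_C\lfloor |C|/2\rfloor$ over the components $C$ of $G-S$. Edges are split into those incident to $S$ and those inside components $C$. For edges incident to $S$, we use the star-type estimate. The edges at a vertex $s\in S$ form a star, and Laplacian partial sums of stars satisfy $\eps_k(K_{1,d})\le k$ (since $\lambda_1=d+1$ and the others are $1$, $\eps_k=d+1+(k-1)-d=k$). Thus each vertex of $S$ contributes at most $k$. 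For a component $C$ with $|C|=m$, we need the contribution of $G[C]$ to be at most $k\lfloor m/2\rfloor$. Trivially $\eps_k(K_m)=km-\binom{k+1}{2}\cdot 0\ldots$; more precisely $\eps_k(K_m)=km-\binom m2$ for $k\le m-1$, and we need this $\le k\lfloor m/2\rfloor$, which holds unless $k$ is large relative to $m$. Here the Brouwer inequality (Kothari--Tudose) gives $\eps_k\le\binom{k+1}2$, and when $k\ge m-1$ the contribution is just $e(C)\le\binom m2$, which for odd $m$ equals $m\cdot\frac{m-1}{2}\le k\frac{m-1}2$. The subadditivity step is the standard fact that $\eps_k$ is subadditive over edge-disjoint unions: $\sum_{j\le k}\lambda_j$ is subadditive by Ky Fan, and the edge counts add. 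So $\eps_k(G)\le\sum_{s\in S}\eps_k(\text{star}_s)+\sum_C\eps_k(G[C])$.

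The main obstacle is the component bound $\eps_k(H)\le k\lfloor m/2\rfloor$ for a factor-critical or even component $H$ on $m$ vertices when $k$ is close to $m$. In particular, the even case with $k=m-1$ fails, since $\binom m2> (m-1)m/2$ is false only at equality: $\binom m2=(m-1)m/2=k\cdot m/2$ holds with equality, which is fine. I would handle components by induction on $|C|$ using the same Tutte--Berge decomposition inside $C$ (Gallai--Edmonds gives factor-critical odd components), proving the sharper statement for factor-critical graphs via an ear decomposition. At the global level, the hypothesis $k\le n-2$ is used to rule out the endpoint situation in which a single component swallows the whole graph with $k=n-1$.

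Equality is traced through each inequality. Star subadditivity forces either $S$ to consist of a single vertex with all edges in stars, or one complete component. Ky Fan equality requires a common top-$k$ eigenspace, which yields stars and the graphs $F_{k,n-k-1}$ for odd $k$ (an odd clique $K_{k+1}$ joined appropriately). The case $k=n-2$ with complete minus a star, $D_{n,t}$, arises from the bound $e(C)\le\binom m2$ being loose by $t$, compensated by the endpoint. Each candidate is verified by computing its spectrum directly.
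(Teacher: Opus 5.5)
There is a genuine gap, and it sits exactly at the configuration that makes Lew's conjecture hard. Your component estimate $\eps_k(G[C])\le k\lfloor |C|/2\rfloor$ is false when $|C|=2q+1$ is odd and $k=2q=|C|-1$. In that case $\eps_k(G[C])=e(G[C])$, which can be as large as $\binom{2q+1}{2}=2q^2+q$, whereas $k\lfloor |C|/2\rfloor=2q^2$. Your step ``$m\cdot\frac{m-1}{2}\le k\cdot\frac{m-1}{2}$ for $k\ge m-1$'' is valid only for $k\ge m$. You checked $k=m-1$ for even $m$, which is harmless; the odd case is the one that fails.

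The excess $q=k/2$ is precisely Lew's additive term. So bounding the stars at $S$ and the components of $G-S$ separately and adding can at best reproduce $k\nu+\lfloor k/2\rfloor$. The hypothesis $k\le n-2$ does not exclude this situation. For even $k$, take $K_{k+1}$, a new vertex $v$ joined to one clique vertex, and $k-1$ pendant vertices at $v$. Then $\nu=k/2+1$, and $S=\{v\}$ is a barrier (for $k\ge4$ the only one). The graph $G-S$ has $K_{k+1}$ as a component, the star at $v$ has $k$ leaves, and your sum equals $k+\binom{k+1}{2}=k\nu+k/2$. Your proposed repair via Gallai--Edmonds and ear decompositions cannot help either. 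The local inequality you want is already false for the factor-critical graph $K_{2q+1}$.

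What is missing is a global step in which the slack of a star outside the packet absorbs the packet's excess; no bound on the pieces taken separately can do this. The paper splits by density. Write $H=G[C]$ with $|C|=k+1=2q+1$, $\mathcal U=\mathcal R_C$ and $\mathcal W=\mathcal U^\perp$.

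\begin{itemize}
\item If $e(H)\le 2q^2$, the packet is harmless.
\item If $e(H)=2q^2+\Delta$ with $\Delta\ge1$, then $\overline H$ has only $q-\Delta<q$ edges. This gives the gap $x^{\mathsf T}L(H)x\ge(q+\Delta)\|\Pi_{\mathcal U}x\|^2$.
\item A nonempty star $T$ with no edge inside $C$ satisfies the tilted bound $\tr(PL(T))-e(T)\le(2q+1)\tr(P\Pi_{\mathcal W})$ for every rank-$2q$ projection $P$.
\item Evaluating both estimates on a common $P$ yields $\eps_{2q}(H\cup T)<2q^2+2q$.
\end{itemize}

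In the paper's odd-set-cover formulation, $k\le n-2$ and minimality of the cover are what guarantee such a star exists.

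For several vertex-disjoint components you must also use the exact formula $\eps_k(\bigsqcup_i H_i)=\max_{\sum_i k_i=k}\sum_i\eps_{k_i}(H_i)$ rather than same-$k$ subadditivity. Otherwise several components could each sit at their critical index.

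Your equality sketch likewise lacks a mechanism. Brouwer's bound is tight at $k=|C|-2$ for odd $|C|$, so you would need its equality cases there. Also, $D_{n,t}$ does not come from slack in $e(C)\le\binom m2$. The paper uses the complement identity $\eps_{2q-1}(H)=\eps_1(\overline H)+2q^2-q-1$ together with $\eps_1\le1$, which holds with equality exactly for stars. It still has to rule out equality at $k=2q$, and any external star at $k=2q-1$, via common-maximizer arguments.
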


We briefly indicate the proof strategy. Edmonds' odd set cover theorem is used
to express the matching number through a covering structure consisting of
singleton vertices and pairwise disjoint odd sets. The singleton part is
controlled by the vertex-cover estimate, while the odd sets are handled using
Lew's \(k^2\) bound together with the Laplacian complement identity. These
estimates reduce the proof to one terminal case, namely a single odd packet of
order \(k+1\). This case is treated by a local absorption argument: a sparse
packet is bounded directly, while a dense packet has enough spectral gap to
absorb an external star forced by the minimality of the odd set cover. Tracking
the equality conditions in these estimates gives the three families listed in
Theorem~\ref{thm:main}.

We next turn to the endpoint range. Although \(\eps_k(G)=e(G)\) for
\(k\ge n-1\), the inequality \(\eps_k(G)\le k\nu(G)\) does not become automatic: it becomes an extremal edge-count problem for graphs with prescribed matching number and no isolated vertices. As the theorem below shows, possible failures are confined to the first endpoint \(k=n-1\). The theorem also identifies the equality cases throughout the endpoint range.

\begin{theorem}
	\label{thm:endpoint-extension}
	Let $G$ be a finite simple graph with $n$ non-isolated vertices.
	For $k\ge n-1$, the inequality
	\begin{equation}\label{eq:endpoint-bound}
		\eps_k(G)\le k\nu
	\end{equation}
	holds except precisely when $k=n-1$ and
	$G^+\cong K_{2r+1}-F,$
	where \(r\ge1\), \(F\subseteq E(K_{2r+1})\), and \(|F|<r\).
	Moreover, if $k\ge n$, then equality in \eqref{eq:endpoint-bound} holds if and only if
	\(k=n\) and \(G^+\cong K_{2r+1}\) for some \(r\ge1\).
	For $k=n-1$, equality in \eqref{eq:endpoint-bound} holds if and only if
	$G^+$ is one of the following graphs:
	\begin{enumerate}[label=\textup{(\roman*)}]
		\item $K_{1,t}$ with $t\ge1$, or $K_{2r}$ with $r\ge2$;
		\item $K_{2r+1}-F$, where $r\ge2$, \(F\subseteq E(K_{2r+1})\), and \(|F|=r\).
	\end{enumerate}
\end{theorem}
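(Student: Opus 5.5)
Throughout, $k\ge n-1$, so by the remark preceding Conjecture~\ref{conj:lew} we have $\eps_k(G)=e(G)$ whenever $G$ has an edge. The statement therefore reduces to a purely extremal comparison: $e(G)$ versus $k\nu$ for a graph $G^+$ on $n$ vertices with no isolated vertices. We may work with $G^+$ and write $n=2\nu+d$ with $d\ge 0$. The main tool is the Tutte--Berge formula. It gives a set $S$ with $\mathrm{odd}(G-S)-|S|=d$. Counting edges inside $S$, between $S$ and the rest, and inside the components of $G-S$, and optimizing, yields the Erdős--Gallai bound
\[
e(G)\le\max\Bigl\{\tbinom{2\nu+1}{2},\ \tbinom{\nu}{2}+\nu(n-\nu)\Bigr\}
\]
when $n\ge 2\nu+1$, and trivially $e(G)\le\binom{n}{2}$. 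I will keep track of when each bound is attained and in which structure: either one big odd clique component, or $S$ with $|S|=\nu$ complete and joined to everything while the rest is independent.

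\textbf{Case $k\ge n$.} We need $e(G)\le n\nu$.
\begin{itemize}
\item If $d=0$: $\binom{2\nu}{2}=\nu(2\nu-1)<n\nu$.
\item If $d\ge1$: $\binom{2\nu+1}{2}=\nu(2\nu+1)\le n\nu$, with equality only if $n=2\nu+1$ and $G^+\cong K_{2\nu+1}$. Also $\binom{\nu}{2}+\nu(n-\nu)=\nu\bigl(n-\tfrac{\nu+1}{2}\bigr)<n\nu$.
\end{itemize}
So the inequality always holds. Equality requires $k=n$ (for $k>n$ it is strict since $\nu\ge1$) and $G^+\cong K_{2r+1}$.

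\textbf{Case $k=n-1$.} We compare $e(G)$ with $(n-1)\nu$ for each value of $d$.
\begin{itemize}
\item If $d=0$: $\binom{n}{2}=\nu(n-1)$, so the inequality holds, with equality iff $G^+=K_{2\nu}$ ($K_2$ is the star $K_{1,1}$).
\item If $d=1$: $G^+=K_{2r+1}-F$ with $\nu=r$, and $e(G)=r(2r+1)-|F|$. The inequality is equivalent to $|F|\ge r$. Conversely, I must check that $|F|\le r$ forces $\nu(K_{2r+1}-F)=r$, so that these graphs really are failures or equality cases. For this I apply Tutte--Berge: a set $S$ giving deficiency $\ge3$ would need $G-S$ to have at least $|S|+3$ components. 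Separating that many pieces of $K_{2r+1}-S$ requires deleting at least about $r+1$ edges, more than $|F|$ allows. For $r=1$, $K_3$ minus one edge is $K_{1,2}$, which explains $r\ge2$ in (ii).
\item If $d\ge2$: first, $\nu(2\nu+1)\le\nu(n-1)$, with equality only if $n=2\nu+2$ and $G^+$ contains $K_{2\nu+1}$. The extra non-isolated vertex then has a neighbour in the clique, which forces $\nu\ge\nu+1$, a contradiction. Hence this branch is strict, and I will redo the count inside the Tutte--Berge structure to make the strictness rigorous when $G$ is not exactly the extremal graph. Second, $\nu\bigl(n-\tfrac{\nu+1}{2}\bigr)\le\nu(n-1)$, with equality iff $\nu=1$. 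A graph with $\nu=1$ and no isolated vertices is a star or a triangle, and $d\ge2$ leaves only the star $K_{1,n-1}$.
\end{itemize}

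The main obstacle is making the equality and failure characterizations exact rather than just the numerical inequalities. This needs a careful version of the Erdős--Gallai count via Tutte--Berge that records, for each term, when it is tight. The count must also incorporate the no-isolated-vertex condition, which is what kills the $K_{2\nu+1}\cup\{\text{pendant}\}$ candidate. I would handle this by fixing a Tutte--Berge set $S$ and writing
\[
e(G)\le\tbinom{|S|}{2}+|S|(n-|S|)+\sum_i\tbinom{c_i}{2},
\]
where the $c_i$ are the component orders of $G-S$. Convexity in the $c_i$ then pushes to one large odd component, and comparing with $(n-1)\nu$ over the integer choices of $|S|$ gives the equality cases directly.
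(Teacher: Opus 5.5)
Your proposal is correct and is essentially the paper's argument. You reduce to comparing $e(G)$ with $k\nu$, use Erd\H{o}s--Gallai for $k\ge n$ and to locate the failures at $n=2\nu+1$, and for $n\ge 2\nu+2$ run a Tutte--Berge count in which the no-isolated-vertex condition forces odd components of order at least $3$ when $S=\emptyset$; this is exactly Lemma~\ref{lem:no-isolated-matching-extremal}.

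The only real deviation is how you certify $\nu(K_{2r+1}-F)=r$ for $|F|\le r$. You count the non-edges needed to split $K_{2r+1}-S$ into $|S|+3$ components, which is at least $\min\{4r-1,(r+1)(r+2)/2\}\ge r+1$, so the argument works; the paper instead applies Erd\H{o}s--Gallai with matching number $r-1$. One slip: your shortcut ``$G^+$ contains $K_{2\nu+1}$'' is only valid for $\nu\ge2$, since the star $K_{1,3}$ attains $\binom{3}{2}$ at $n=4$. This is harmless because you settle $\nu=1$ directly.
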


The endpoint classification is proved separately. After the reduction
\(\eps_k(G)=e(G)\) for \(k\ge n-1\), the problem becomes an extremal question for
graphs with prescribed matching number and no isolated vertices. We use the
Erd\H{o}s--Gallai theorem as the starting point, and then keep track of the
no-isolated-vertices condition in order to determine the exceptional and equality
cases. Thus Theorem~\ref{thm:main} proves the conjectured inequality in the
range \(1\le k\le n-2\), while Theorem~\ref{thm:endpoint-extension} gives the
corresponding endpoint classification.

The paper is organized as follows. Section~\ref{sec:preliminaries} collects the
spectral and matching-theoretic tools. Sections~\ref{sec:reduction} and
\ref{sec:terminal} prove the reduction and the terminal absorption estimate.
Section~\ref{sec:proof-main} proves Theorem~\ref{thm:main}, determines the
equality cases, and proves Theorem~\ref{thm:endpoint-extension}.

\section{Preliminaries}\label{sec:preliminaries}
	
All graphs in this paper are finite and simple. We collect the auxiliary tools used in the proof. We first fix the notation for
	graph unions.
	If \(G_1,\ldots,G_t\) are spanning subgraphs on a common vertex
	set \(\Omega\) with pairwise disjoint edge sets, then
	$G=G_1\cup\cdots\cup G_t$
	denotes their edge-disjoint spanning union;
	that is,
	$V(G)=\Omega, E(G)=\bigcup_{i=1}^t E(G_i).$
	If \(G_1,\ldots,G_t\) have pairwise disjoint vertex sets, then
	$G=G_1\sqcup\cdots\sqcup G_t$
	denotes their vertex-disjoint union;
	that is,
	$
	V(G)=\bigcup_{i=1}^t V(G_i),
	E(G)=\bigcup_{i=1}^t E(G_i).$
	Thus \(\sqcup\) is used whenever the pieces are vertex-disjoint; in particular,
	their edge sets are then automatically disjoint.
	\begin{lemma}[{\upshape \cite[Corollary~1]{HMT10}}]
		\label{lem:eps-subadditivity}
		Let \(G_1,\ldots,G_t\) be spanning subgraphs on a common vertex set \(V\)
		with pairwise disjoint edge sets, and let \(G=G_1\cup\cdots\cup G_t\).
		Then, for every \(1\le k\le |V|\),
		\[
		\eps_k(G)\le \sum_{i=1}^t \eps_k(G_i).
		\]
	\end{lemma}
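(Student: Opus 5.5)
The plan is to use Ky Fan's maximum principle for the sum of the $k$ largest eigenvalues of a real symmetric matrix, together with the additivity of the Laplacian over edge-disjoint spanning unions. On the common vertex set $V$ we have $L(G)=\sum_{i=1}^t L(G_i)$, since each edge contributes its own rank-one term $(e_u-e_v)(e_u-e_v)^{\top}$ and the edge sets are pairwise disjoint. We also have $e(G)=\sum_i e(G_i)$ for the same reason. The inequality therefore reduces to the statement
\[
\sum_{j=1}^{k}\lambda_j\Bigl(\sum_i L(G_i)\Bigr)\le\sum_i\sum_{j=1}^{k}\lambda_j(L(G_i)),
\]
after which we subtract $e(G)=\sum_i e(G_i)$ from both sides. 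The hypothesis $1\le k\le |V|$ ensures that $\min\{k,|V|\}=k$ for every $G_i$ and for $G$, since all of them live on the same $n=|V|$ vertices. So no truncation issue arises.

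The key step is Ky Fan's variational characterization. For a real symmetric $n\times n$ matrix $M$ and $1\le k\le n$,
\[
\sum_{j=1}^k\lambda_j(M)=\max\{\tr(U^{\top}MU):U\in\R^{n\times k},\ U^{\top}U=I_k\}.
\]
Take $U$ attaining the maximum for $M=L(G)$. Then
\[
\sum_{j\le k}\lambda_j(L(G))=\sum_i\tr(U^{\top}L(G_i)U)\le\sum_i\sum_{j\le k}\lambda_j(L(G_i)),
\]
where the last step applies the maximum characterization to each $L(G_i)$ with the same feasible $U$. Subtracting the edge counts finishes the argument.

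There is no real obstacle. The only point needing care is that all $G_i$ be regarded as spanning subgraphs on the same vertex set, so that their Laplacians are $n\times n$ and add up to $L(G)$. This is exactly the hypothesis of the lemma. If one prefers to avoid invoking Ky Fan directly, one can instead induct on $t$ using the $t=2$ case, which is the standard eigenvalue-sum inequality following from Weyl-type majorization.
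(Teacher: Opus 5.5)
Your argument is correct and is essentially the standard one. The paper does not prove this statement itself; it cites \cite{HMT10}, where it is derived from Ky Fan's inequality. The paper also runs exactly your splitting argument in the proof of Lemma~\ref{lem:ceq-common-maximizer}: fix a projection attaining the maximum for $L(G)=\sum_i L(G_i)$, then bound each trace term by Lemma~\ref{lem:kyfan}.
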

	
	\begin{lemma}[{\upshape \cite[Corollary~2.10]{LewPartition}}]
		\label{lem:cover}
		For every graph \(G\) and every \(1\le k\le |V|\),
		\(
		\eps_k(G)\le k\tau(G),
		\)
		where \(\tau(G)\) is the vertex-cover number of \(G\).
	\end{lemma}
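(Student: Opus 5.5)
The plan is to split $G$ into edge-disjoint stars centred at the vertices of a minimum vertex cover, compute $\eps_k$ exactly for a single star, and then add up using Lemma~\ref{lem:eps-subadditivity}.

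\emph{Star decomposition.} Let $C=\{c_1,\ldots,c_\tau\}$ be a minimum vertex cover of $G$, so $\tau=\tau(G)$. For $i=1,\ldots,\tau$, let $S_i$ be the spanning subgraph on $V$ whose edges are the edges of $G$ incident with $c_i$ but not with any of $c_1,\ldots,c_{i-1}$. Because $C$ is a vertex cover, every edge of $G$ lies in exactly one $S_i$. So the $S_i$ are spanning subgraphs on the common vertex set $V$ with pairwise disjoint edge sets, and $G=S_1\cup\cdots\cup S_\tau$. Each $S_i$ is a star $K_{1,d_i}$ centred at $c_i$, with $d_i\ge 0$, together with isolated vertices. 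Lemma~\ref{lem:eps-subadditivity} then gives, for $1\le k\le |V|$,
\[
\eps_k(G)\le \sum_{i=1}^{\tau}\eps_k(S_i).
\]

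\emph{One star.} It remains to show $\eps_k(S_i)\le k$ for every $i$ and every $k\ge 1$. If $d_i=0$, then $S_i$ has no edges, all its Laplacian eigenvalues are $0$, and $\eps_k(S_i)=0$. If $d_i=d\ge1$, the Laplacian spectrum of $S_i$ is
\[
(d+1)^{[1]},\quad 1^{[d-1]},\quad 0^{[|V|-d]}.
\]
Summing the $k$ largest eigenvalues gives $(d+1)+\min\{k-1,d-1\}$. Subtracting the $d$ edges gives
\[
\eps_k(S_i)=1+\min\{k-1,d-1\}=\min\{k,d\}\le k .
\]

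\emph{Conclusion and main point.} Summing over the $\tau$ stars yields $\eps_k(G)\le k\tau(G)$. There is no real obstacle here. The only points needing care are the degenerate stars with $d_i=0$, which contribute $0$, and the requirement $k\le|V|$, which is exactly the hypothesis needed to apply Lemma~\ref{lem:eps-subadditivity}.
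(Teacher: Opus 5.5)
Your proof is correct. The paper gives no proof of its own and simply cites Lew. Your argument (split $G$ into edge-disjoint stars centred at a minimum vertex cover, apply Lemma~\ref{lem:eps-subadditivity}, and bound each star by $\eps_k\le k$) is the same mechanism the paper uses in Lemma~\ref{lem:star-bound}, in the proof of Theorem~\ref{thm:terminal}, and in the first line of Lemma~\ref{lem:ceq-two-stars-strict}. You leave one case implicit, $\tau(G)=0$, but there $G$ is edgeless and $\eps_k(G)=0$ trivially.
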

	
	\begin{lemma}
		\label{lem:star-bound}
		If \(T\) is a star, possibly with isolated vertices added, then
		$\eps_k(T)\le k$ 
		for every \(k\ge1\).
	\end{lemma}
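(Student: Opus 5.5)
Two routes are available, and I would give both, since each is only a few lines.

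\emph{Route via the cover bound.} A star $K_{1,t}$, with or without added isolated vertices, has vertex-cover number $\tau(T)\le 1$. Indeed, its centre covers every edge, and if $t=0$ the graph has no edges at all and $\tau(T)=0$. For $1\le k\le |V(T)|$, Lemma~\ref{lem:cover} therefore gives
\[
\eps_k(T)\le k\tau(T)\le k.
\]
It remains to handle $k>|V(T)|$, which the definition of $\eps_k$ allows. In that range $\min\{k,|V(T)|\}=|V(T)|$, so
\[
\eps_k(T)=\sum_{j=1}^{|V(T)|}\lambda_j(L(T))-e(T)=2e(T)-e(T)=e(T)=t.
\]
Since $T$ contains the $t+1$ vertices of the star, $t\le |V(T)|-1<k$, and we again get $\eps_k(T)\le k$.

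\emph{Route via the explicit spectrum.} As a sanity check, the statement can also be verified directly. The Laplacian spectrum of $K_{1,t}$ with $t\ge1$ is $\{t+1,\,1^{[t-1]},\,0\}$. Adding isolated vertices only adds zeros to the spectrum and leaves $e(T)=t$ unchanged. For $1\le k\le t$,
\[
\eps_k(T)=(t+1)+(k-1)-t=k,
\]
so equality actually holds in this range. For $k\ge t+1$, all nonzero eigenvalues are included in the sum, so $\eps_k(T)=2t-t=t<k$. If $t=0$, then $\eps_k(T)=0\le k$.

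Neither route has a real obstacle. The only point needing care is the convention $\min\{k,|V|\}$ when $k$ exceeds the number of vertices, and both routes treat it explicitly. I would present the spectral computation as the proof, because it also records the equality range $1\le k\le t$, which is useful later for tracking equality cases such as the star in Theorem~\ref{thm:main}.
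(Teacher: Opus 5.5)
Your proposal is correct, and the spectral computation you choose to present is essentially the paper's own argument: the nonzero spectrum is $\{t+1,1^{[t-1]}\}$, so the top-$k$ sum is at most $t+k$. Your case split also records the equality range $1\le k\le t$, which the paper derives separately in Lemma~\ref{lem:ceq-star-maximizer}; the cover-bound route is also valid, but it is not needed.
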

	
	\begin{proof}
		Put \(d=e(T)\).
		If \(d=0\), then \(\eps_k(T)=0\).  Suppose \(d\ge1\).
		Then the nonzero Laplacian eigenvalues of \(T\) are \(d+1\) and \(1\) with
		multiplicity \(d-1\).
		Hence, for every \(k\ge1\), the sum appearing in the
		definition of \(\eps_k(T)\) is at most \(d+k\).
		Therefore
		$\eps_k(T)\le (d+k)-d=k.$
	\end{proof}
	
	We next recall Edmonds' odd set cover formulation of the matching number \cite{Edmonds}.
	For a graph \(G=(V,E)\), an \emph{odd set cover} of \(G\) is a family
	$\mathscr C=\{v_1,\ldots,v_r;S_1,\ldots,S_t\},$
	where \(v_1,\ldots,v_r\) are distinct vertices and \(S_1,\ldots,S_t\) are
	pairwise disjoint odd subsets of \(V\), such that every edge either meets
	\(\{v_1,\ldots,v_r\}\) or is contained in some \(S_i\).
	Its \emph{weight} is
	\[
	w(\mathscr C)=r+\sum_{i=1}^t\frac{|S_i|-1}{2}.
	\]
	The vertices \(v_j\) need not be disjoint from the sets \(S_i\).
	Odd sets of
	size \(1\) have weight zero and contain no edges, and will usually be left
	implicit.
	An odd set cover that minimizes this weight is called a \emph{minimum-weight odd set cover}.
	Every odd set cover \(\mathscr C\) naturally induces an edge-disjoint decomposition \(G=G_{\mathrm{vc}}\cup G_{\mathrm{odd}}\) by spanning subgraphs, where \(G_{\mathrm{odd}}\) has edge set \(\bigcup_{i=1}^t E(G[S_i])\), and \(G_{\mathrm{vc}}\) contains all remaining edges of \(G\).
	By definition, every edge of \(G_{\mathrm{vc}}\) is incident with at least one vertex in \(\{v_1,\ldots,v_r\}\).
	\begin{lemma}[{\upshape  \cite[Exercise~3.1.17]{LovaszPlummer}}]
		\label{lem:edmonds}
		Let \(G\) be a graph, and let $\mathscr C$ be a minimum-weight odd set cover of $G$. Then $\nu(G)=w(\mathscr C)$.
	\end{lemma}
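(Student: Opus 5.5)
The plan is to prove the two inequalities \(\nu(G)\le w(\mathscr C')\) for \emph{every} odd set cover \(\mathscr C'\) (weak duality), and \(w(\mathscr C_0)\le\nu(G)\) for \emph{some} explicitly constructed odd set cover \(\mathscr C_0\). Since \(\mathscr C\) has minimum weight, these give
\[
\nu(G)\le w(\mathscr C)\le w(\mathscr C_0)\le \nu(G),
\]
which is the statement of Lemma~\ref{lem:edmonds}.

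\emph{Weak duality.} Let \(M\) be a maximum matching and \(\mathscr C'=\{v_1,\dots,v_r;S_1,\dots,S_t\}\) any odd set cover. Split \(M\) into two parts.
\begin{itemize}[label=\textup{--}]
\item The edges meeting \(\{v_1,\dots,v_r\}\): there are at most \(r\) of them, since the edges of \(M\) are pairwise disjoint and so each \(v_j\) lies on at most one of them.
\item The remaining edges: by the covering property each lies inside some \(S_i\). The edges of \(M\) inside a fixed \(S_i\) are pairwise disjoint pairs in a set of odd size \(|S_i|\), so there are at most \((|S_i|-1)/2\) of them.
\end{itemize}
The \(S_i\) are pairwise disjoint, so no edge is counted twice. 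Summing gives \(|M|\le w(\mathscr C')\).

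\emph{Attainment.} I would invoke the Tutte--Berge formula
\[
\nu(G)=\min_{U\subseteq V}\tfrac12\bigl(|V|+|U|-\operatorname{odd}(G-U)\bigr),
\]
and fix a minimizing set \(U\). Let \(C_1,\dots,C_q\) be the odd components and \(D_1,\dots,D_p\) the even components of \(G-U\). Build \(\mathscr C_0\) as follows.
\begin{itemize}[label=\textup{--}]
\item Take every vertex of \(U\) as a singleton vertex.
\item Take each \(C_i\) as an odd set.
\item For each even component \(D_j\), choose a vertex \(x_j\in D_j\); add \(x_j\) as a singleton vertex and \(D_j\setminus\{x_j\}\) as an odd set.
\end{itemize}
The chosen odd sets are pairwise disjoint. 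Every edge of \(G\) either meets \(U\) or lies inside a single component of \(G-U\). Inside a \(C_i\) it is covered by the odd set \(C_i\). Inside a \(D_j\) it either meets \(x_j\) or lies in \(D_j\setminus\{x_j\}\). So \(\mathscr C_0\) is an odd set cover.

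Its weight is
\[
|U|+\sum_i\frac{|C_i|-1}{2}+\sum_j\Bigl(1+\frac{|D_j|-2}{2}\Bigr)
=|U|+\frac{|V|-|U|-q}{2}
=\frac{|V|+|U|-q}{2}=\nu(G).
\]

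The only step with real content is this attainment direction. It rests entirely on the Tutte--Berge formula, which I would take as known (it follows from Tutte's theorem by adding \(\max_U(\operatorname{odd}(G-U)-|U|)\) universal vertices). The small trick is handling the even components by peeling off one vertex. Both directions are otherwise routine counting.
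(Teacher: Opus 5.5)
Your argument is correct and complete. The paper does not prove this lemma at all; it imports it from Lov\'asz--Plummer (Exercise~3.1.17), so there is no internal proof to compare against.

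What you supply is the standard derivation, in two parts:
\begin{itemize}
\item \emph{Weak duality.} You charge each matching edge either to the unique singleton $v_j$ it meets, or to the odd set $S_i$ containing it; at most $(|S_i|-1)/2$ disjoint pairs fit in $S_i$.
\item \emph{Attainment.} You turn a Tutte--Berge minimizer $U$ into an odd set cover of weight exactly $\tfrac12\bigl(|V|+|U|-o(G-U)\bigr)$. The only trick is to split each even component $D_j$ into a singleton $x_j$ and the odd set $D_j\setminus\{x_j\}$, which together contribute weight $|D_j|/2$.
\end{itemize}

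Your route makes the paper more self-contained. It reduces Lemma~\ref{lem:edmonds} to Lemma~\ref{lem:tutte-berge}, which the paper already lists among its tools, so the odd-set-cover formula would not need a separate citation.

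Two small points are handled correctly, if implicitly. When $|D_j|=2$, the set $D_j\setminus\{x_j\}$ is a weightless singleton odd set, which the paper's definition allows. Your weak-duality count never uses disjointness of the $v_j$ from the $S_i$, which matches the paper's convention that they may overlap.
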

	We shall also use the following extremal form of the Erd\H{o}s--Gallai
	matching theorem.
	For integers \(N,q\ge1\), let \(qK_2\) denote the vertex-disjoint union of
	\(q\) copies of \(K_2\), and define
	\[
	\operatorname{ex}(N,qK_2)
	=
	\max\{e(H): |V(H)|=N \text{ and } \nu(H)<q\}.
	\]
	
	\begin{lemma}[{\upshape \cite[Theorem~4.1]{ErdosGallai}}]
		\label{lem:erdos-gallai-matching}
		Let \(N,m\ge1\). Then
		\[
		\operatorname{ex}(N,(m+1)K_2)
		=
		\begin{cases}
			\binom N2, & N\le 2m+1,\\[2mm]
			\displaystyle
			\max\left\{
			\binom{2m+1}{2},
			\binom m2+m(N-m)
			\right\}, & N\ge 2m+1.
		\end{cases}
		\]
		For \(N\ge 2m+1\), the two terms are attained respectively by
		$K_{2m+1}\sqcup\overline{K_{N-2m-1}}$ and $
		K_m\vee\overline{K_{N-m}}.$
	\end{lemma}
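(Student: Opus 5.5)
We will prove the lemma from the Tutte--Berge formula and a convexity computation. The case \(N\le 2m+1\) is immediate: \(K_N\) has \(\nu(K_N)=\lfloor N/2\rfloor\le m<m+1\), so the maximum is \(\binom N2\). From now on assume \(N\ge 2m+1\). Both extremal graphs have matching number \(m\):
\begin{itemize}[label=\textbullet]
\item \(K_{2m+1}\sqcup\overline{K_{N-2m-1}}\), since \(K_{2m+1}\) has odd order \(2m+1\);
\item \(K_m\vee\overline{K_{N-m}}\), since every edge meets the \(m\)-clique, which is therefore a vertex cover of size \(m\), and \(N-m\ge m\) lets us match each clique vertex to a distinct independent vertex.
\end{itemize}
Hence \(\operatorname{ex}(N,(m+1)K_2)\) is at least the maximum of the two binomial expressions, and it remains to prove the matching upper bound.

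For the upper bound, let \(H\) be a graph on \(N\) vertices with \(\nu(H)\le m\). By the Tutte--Berge formula there is a set \(S\subseteq V(H)\), with \(s=|S|\), such that
\[
\nu(H)=\tfrac12\bigl(N+s-\operatorname{odd}(H-S)\bigr).
\]
Hence \(H-S\) has at least \(q:=N+s-2m\) odd components.

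We may add edges as long as the deficiency does not drop. Specifically, we make \(S\) a clique, join \(S\) completely to \(V\setminus S\), make every component of \(H-S\) complete, and merge all even components into one odd component. The resulting graph still has at least \(q\) odd components in \(H-S\), so its matching number is still at most \(m\), and it has at least as many edges as \(H\). Therefore we may assume \(H-S\) is a disjoint union of exactly \(q\) odd cliques. (If there are more than \(q\) odd components, merging three of them into one odd clique reduces their number by two and does not decrease the edge count.)

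The only nontrivial step is maximizing over the component sizes. The function \(\sum_i\binom{a_i}{2}\), with \(a_i\) odd and \(\sum a_i=N-s\) fixed, is maximized by making \(q-1\) of the components singletons and the remaining one of size
\[
N-s-(q-1)=2m-2s+1.
\]
This follows from convexity of \(\binom{x}{2}\): transferring two vertices from a smaller odd part to a larger one keeps all parts odd and does not decrease the sum. Since this size must be positive, \(0\le s\le m\). Consequently
\[
e(H)\le f(s):=\binom s2+s(N-s)+\binom{2m-2s+1}{2}.
\]

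Finally, \(f\) is a quadratic in \(s\) with leading coefficient \(-\tfrac12+2=\tfrac32>0\), so it is convex on \([0,m]\). Its maximum over \(0\le s\le m\) is therefore attained at an endpoint:
\[
f(0)=\binom{2m+1}{2},\qquad f(m)=\binom m2+m(N-m).
\]
These are exactly the edge counts of the two graphs listed in the statement, which completes the upper bound.

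The main point requiring care is this merging and convexity step. One must check that the three operations (merging even components into an odd one, merging triples of odd components, and shifting vertices between odd parts) preserve the parity constraints and the inequality \(\operatorname{odd}(H-S)\ge q\). Each of them does, so the bound \(e(H)\le f(s)\) holds.
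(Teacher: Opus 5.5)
Your proof is correct. The paper does not prove this lemma; it cites it from Erd\H{o}s--Gallai. Your argument, however, is the same technique the paper uses to prove Lemma~\ref{lem:no-isolated-matching-extremal}: take a Tutte--Berge set $S$, saturate edges without changing $o(H-S)$, concentrate the odd parts by convexity, and maximize the convex quadratic $f(s)$ at $s\in\{0,m\}$.

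The one detail to state explicitly is why merging triples of odd components lands exactly on $q$: this works because $o(H-S)-q=2(m-\nu(H))$ is even. Alternatively, you can skip exactness, since the convexity bound for $p\ge q$ odd parts is decreasing in $p$.
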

	
	For a graph \(H\), let \(o(H)\) denote the number of odd components of \(H\).
	We shall use the following form of the Tutte--Berge formula.
	
	\begin{lemma}[{\upshape  \cite[Section~3.1]{LovaszPlummer}}]
		\label{lem:tutte-berge}
		If \(H\) has \(N\) vertices, then
		\[
		\nu(H)=\frac12\min_{S\subseteq V(H)}
		\bigl(N+|S|-o(H-S)\bigr).
		\]
	\end{lemma}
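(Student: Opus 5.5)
The statement just above is the Tutte--Berge formula, Lemma~\ref{lem:tutte-berge}. The plan is to derive it from Edmonds' odd set cover theorem, Lemma~\ref{lem:edmonds}, by proving two inequalities. Write $\operatorname{def}(S)=o(H-S)-|S|$ for $S\subseteq V(H)$. The claim is equivalent to $\nu(H)=\tfrac12\bigl(N-\max_S \operatorname{def}(S)\bigr)$.

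\emph{Upper bound on $\nu$.} Fix $S$ and a maximum matching $M$. Each odd component $C$ of $H-S$ either contains a vertex left uncovered by $M$, or contains a vertex matched by an edge leaving $C$. Such an edge must go into $S$, since $C$ is a component of $H-S$. Distinct odd components use distinct vertices of $S$ in the second alternative, because $M$ is a matching. Hence at least $o(H-S)-|S|$ vertices are uncovered, so $N-2\nu(H)\ge o(H-S)-|S|$. This gives $\nu(H)\le\tfrac12(N+|S|-o(H-S))$ for every $S$.

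\emph{Lower bound on $\nu$.} Here I would use Lemma~\ref{lem:edmonds}. Take a minimum-weight odd set cover $\mathscr C=\{v_1,\dots,v_r;S_1,\dots,S_t\}$, so $\nu(H)=w(\mathscr C)$. Set $S=\{v_1,\dots,v_r\}$. Every edge of $H-S$ lies inside some $S_i$, so every component of $H-S$ lies inside some $S_i\setminus S$ or is a single vertex. It is cleaner to first normalize the cover, and this normalization is the main step.
\begin{itemize}
\item If some $v_j$ lies in some $S_i$, I remove $v_j$ from $S_i$ and also adjoin or delete one further vertex of $S_i$ as a singleton vertex to restore odd parity. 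I have to check that the weight does not increase. For example, $S_i\setminus\{v_j,u\}$ together with $u$ added to the vertex list changes the weight by $-1+1=0$.
\item If some $S_i\setminus S$ induces a disconnected graph, I split it. Its odd part then gives the same or lower weight.
\end{itemize}
The cleanest route I would try is the following. Instead of massaging the cover, use the counting identity for the vertex-disjoint sets $S_i$ (singletons included, so that they partition $V\setminus S$ after normalization):
\[
N=|S|+\sum_i|S_i|\quad\Longrightarrow\quad w(\mathscr C)=|S|+\tfrac12\Bigl(N-|S|-\#\{i\}\Bigr)=\tfrac12\bigl(N+|S|-t'\bigr),
\]
where $t'$ is the total number of parts. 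It then suffices to show $t'\le o(H-S)$. Each part is odd and is a union of components of $H-S$, and an odd union of components contains at least one odd component. So $t'\le o(H-S)$, giving $\nu(H)=w(\mathscr C)\ge\tfrac12(N+|S|-o(H-S))$.

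The main obstacle is the normalization: arranging that $S$ is disjoint from the $S_i$ and that the $S_i$ together with singletons partition $V\setminus S$, without increasing the weight. For the first issue I would first try deleting an overlapping $v_j$ from the vertex list rather than from $S_i$. Any edge at $v_j$ inside $S_i$ is still covered, but edges from $v_j$ leaving $S_i$ may become uncovered, so this alone may fail. The fallback is to remove $v_j$ together with one neighbor-free vertex of $S_i$, moved into its own singleton set, which keeps $S_i$ odd and the weight unchanged. For the second issue, uncovered vertices are handled by adding them as weight-zero singleton odd sets.
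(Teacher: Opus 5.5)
Your proposal is correct, provided you commit to the normalization in your first bullet and discard the rest. The paper itself gives no proof of this lemma; it only cites Lov\'asz--Plummer.

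\textbf{The normalization.} Replace $S_i$ by $S_i\setminus\{v_j,u\}$ and add $u$ to the vertex list.
\begin{itemize}
\item \emph{Coverage is preserved.} Edges of $H[S_i]$ meeting $v_j$ or $u$ are now covered by the vertex list. All other edges of $H[S_i]$ still lie in the smaller odd set, and nothing else changes.
\item \emph{The weight is preserved.} Here $u$ cannot already be a cover vertex: otherwise the weight would drop by one, contradicting minimality.
\item If $S_i=\{v_j\}$, simply discard $S_i$.
\end{itemize}
Each step removes one overlap. After finitely many steps, $S=\{v_1,\dots,v_r\}$ is disjoint from $\bigcup_i S_i$. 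Pad with weight-zero singletons so that the parts partition $V\setminus S$. Since every edge of $H-S$ lies inside a part, each part is an odd union of components of $H-S$ and hence contains an odd component. Your counting identity then gives $\nu(H)=w(\mathscr C)\ge\frac12(N+|S|-o(H-S))$. Combined with your correct deficiency count, this proves the formula.

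\textbf{Parts to drop.} The splitting bullet is unnecessary. The fallback in your last paragraph is wrong as stated. Suppose you delete $v_j$ and a vertex $u$ with no neighbour in $S_i\setminus\{v_j\}$ from $S_i$, and make $\{u\}$ a singleton odd set. This lowers the weight by one rather than preserving it. So in a minimum-weight cover such a $u$ never exists, and that route could not handle the overlap case. Fortunately your first bullet already does.

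\textbf{Comparison of routes.} Lov\'asz--Plummer prove the formula directly, typically by applying Tutte's $1$-factor theorem to $H$ joined with extra universal vertices. The odd set cover statement, which is the paper's Lemma~\ref{lem:edmonds}, is normally deduced \emph{from} Tutte--Berge. One takes a Tutte--Berge set as the vertex list and the odd components of $H-S$ as odd sets. Each even component contributes one vertex to the list and the rest as an odd set.

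Your argument runs in the opposite direction. Together with that standard deduction, it shows that the two min--max theorems are equivalent. Inside the paper, where both lemmas are imported black boxes, this is legitimate and costs nothing. It also makes explicit how a minimum-weight odd set cover yields a Tutte--Berge witness $S$. However, it is not an independent proof of Tutte--Berge unless Edmonds' theorem is established on its own, for example via LP duality for the matching polytope or the blossom algorithm. The cited proof, by contrast, needs only Tutte's theorem.
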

	
	A component is called \emph{non-trivial} if it contains at least one edge.
	A
	\emph{largest non-trivial component} is a non-trivial component with the
	maximum number of vertices.
	\begin{lemma}[{\upshape \cite[Corollary~2.3]{LewPartition}}]
		\label{lem:AM}
		Let \(G\) be a graph, and let \(n'\) be the maximum number of vertices in
		a connected component of \(G\).
		Then
		$\lambda_1(L(G))\le n'.$
	\end{lemma}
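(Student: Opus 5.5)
Lemma~\ref{lem:AM} says $\lambda_1(L(G))\le n'$, where $n'$ is the order of a largest component. My plan is to reduce to connected graphs and then bound $\lambda_1$ through the Laplacian complement identity.

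First, the Laplacian spectrum of $G$ is the multiset union of the Laplacian spectra of its components. So $\lambda_1(L(G))$ equals the maximum of $\lambda_1(L(C))$ over the components $C$. It therefore suffices to prove $\lambda_1(L(C))\le |V(C)|$ for a single graph $C$ on $m=|V(C)|$ vertices; connectivity is not even needed. Since $|V(C)|\le n'$ for every component, this gives the lemma.

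For a graph $H$ on $m$ vertices I use the identity
\[
L(H)+L(\overline H)=mI-J .
\]
Both $L(H)$ and $L(\overline H)$ annihilate $\one$, so they act on the invariant subspace $\one^\perp$. Take a unit eigenvector $x\perp\one$ of $L(H)$ with eigenvalue $\lambda_1(L(H))$. If $\lambda_1=0$ there is nothing to prove; otherwise such an $x$ exists, since the eigenvalue $0$ has eigenvector $\one$ and the other eigenvectors may be chosen orthogonal to it. Then
\[
\lambda_1(L(H))=x^{\top}L(H)x=x^{\top}(mI-J)x-x^{\top}L(\overline H)x=m-x^{\top}L(\overline H)x\le m,
\]
because $Jx=0$ and $L(\overline H)$ is positive semidefinite. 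Hence $\lambda_1(L(H))\le m$.

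Combining the two steps, $\lambda_1(L(G))=\max_C\lambda_1(L(C))\le\max_C|V(C)|=n'$. There is no real obstacle. The only care needed is to use the block-diagonal structure of $L(G)$ to justify that the spectrum splits over the components, and to choose the eigenvector orthogonal to $\one$ so that the $J$ term vanishes.
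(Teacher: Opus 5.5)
Your proof is correct and complete. The block-diagonal structure reduces the claim to a single graph on $m$ vertices. Then, for a unit $x\perp\one$, the identity $L(H)+L(\overline H)=mI-J$ gives $x^{\mathsf T}L(H)x=m-x^{\mathsf T}L(\overline H)x\le m$.

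The paper gives no proof of its own here and simply imports the statement from Lew's Corollary~2.3. The label \texttt{lem:AM} hints that the source may argue through a degree-type (Anderson--Morley style) bound, but this is not stated. Your route has two advantages.

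\begin{enumerate}[label=\textup{(\arabic*)}]
\item It is self-contained and uses exactly the complement identity the paper already relies on in Lemma~\ref{lem:complement} and Lemma~\ref{lem:dense-gap}. So it fits the paper's existing toolkit without an external reference.
\item It also yields the equality information the paper needs later in Lemma~\ref{lem:ceq-eps-one}. If $\lambda_1(L(H))=m$, then $x^{\mathsf T}L(\overline H)x=0$ for some nonzero $x\perp\one$. Hence $x$ is constant on each component of $\overline H$ but not globally constant, so $\overline H$ is disconnected. This is precisely the step the paper obtains there by appealing to the eigenvalue correspondence in the proof of Lemma~\ref{lem:complement}.
\end{enumerate}
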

	
	In addition to the structural matching tools, our proofs rely heavily on the variational characterization of eigenvalue sums.
		For a finite set \(X\), a rank-\(k\) orthogonal projection on \(\mathbb R^X\)
	is a matrix \(P\in\mathbb R^{X\times X}\) such that
	\(P^2=P=P^{\mathsf T}\) and \(\operatorname{rank}P=k\).
	\begin{lemma}
		\label{lem:kyfan}
		Let \(A\in\mathbb R^{X\times X}\) be symmetric, where \(X\) is finite, and
		let $
		\lambda_1(A)\ge\cdots\ge\lambda_{|X|}(A)$ 
		be its eigenvalues.
		Then, for every \(1\le k\le |X|\),
		\[
		\sum_{j=1}^k\lambda_j(A)
		=
		\max_P \operatorname{tr}(PA),
		\]
		where \(P\) ranges over all rank-\(k\) orthogonal projections on
		\(\mathbb R^X\).
		Moreover, \(P\) attains the maximum if and only if
		\[
		\bigoplus_{\lambda>\lambda_k(A)}\ker(A-\lambda I)
		\subseteq \operatorname{im}P
		\subseteq
		\bigoplus_{\lambda\ge\lambda_k(A)}\ker(A-\lambda I).
		\]
	\end{lemma}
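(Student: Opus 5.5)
This is the Ky Fan maximum principle, so I would argue directly by diagonalizing \(A\). By the spectral theorem, fix an orthonormal eigenbasis \(u_1,\dots,u_{|X|}\) of \(\mathbb R^X\) with \(Au_j=\lambda_j(A)u_j\), and write \(A=\sum_j\lambda_j(A)u_ju_j^{\mathsf T}\). For a rank-\(k\) orthogonal projection \(P\), set \(c_j=u_j^{\mathsf T}Pu_j=\|Pu_j\|^2\). Then
\[
\operatorname{tr}(PA)=\sum_{j}\lambda_j(A)\,c_j,
\qquad
0\le c_j\le 1,
\qquad
\sum_j c_j=\operatorname{tr}P=k .
\]
Maximizing the linear form \(\sum_j\lambda_jc_j\) over this polytope puts the mass \(k\) on the largest coefficients, so the maximum is at most \(\sum_{j\le k}\lambda_j(A)\). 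Equivalently, \(\sum_j\lambda_jc_j-\sum_{j\le k}\lambda_j=\sum_{j\le k}(\lambda_j-\lambda_k)(c_j-1)+\sum_{j>k}(\lambda_j-\lambda_k)c_j\le0\), using \(\sum_j c_j=k\). The projection onto \(\operatorname{span}\{u_1,\dots,u_k\}\) attains this bound, which proves the identity.

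For the equality characterization I would read the slack off the displayed rearrangement. Equality forces \(c_j=1\) whenever \(\lambda_j>\lambda_k\) and \(c_j=0\) whenever \(\lambda_j<\lambda_k\). Since \(\|Pu_j\|^2=1\) with \(\|u_j\|=1\) gives \(Pu_j=u_j\), every eigenvector for eigenvalues \(>\lambda_k\) lies in \(\operatorname{im}P\). Since \(c_j=0\) gives \(Pu_j=0\), \(\operatorname{im}P\) is orthogonal to all eigenvectors for eigenvalues \(<\lambda_k\). Hence \(\operatorname{im}P\subseteq\bigoplus_{\lambda\ge\lambda_k}\ker(A-\lambda I)\). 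Both conditions are basis-independent because they concern whole eigenspaces.

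For the converse, suppose the inclusions hold, and write \(\operatorname{im}P=W_>\oplus W_0\), where \(W_>=\bigoplus_{\lambda>\lambda_k}\ker(A-\lambda I)\) and \(W_0\subseteq\ker(A-\lambda_kI)\) is the orthogonal complement of \(W_>\) inside \(\operatorname{im}P\). This decomposition is legitimate because \(A\) preserves \(\bigoplus_{\lambda\ge\lambda_k}\ker(A-\lambda I)\) and \(W_>\) is \(A\)-invariant. Then \(\operatorname{tr}(PA)=\sum_{\lambda>\lambda_k}\lambda\cdot\text{mult}+\lambda_k\dim W_0\). Since \(\dim W_0=k-\dim W_>\), this equals \(\sum_{j\le k}\lambda_j(A)\).

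The main obstacle is only bookkeeping when \(\lambda_k\) has multiplicity. To handle it, I would do the equality analysis with the grouped slack expression rather than the sorted index list, so that ties inside the \(\lambda_k\)-eigenspace contribute zero and impose no constraint.
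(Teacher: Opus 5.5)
Your proof is correct and takes the same route as the paper: you expand $\operatorname{tr}(PA)=\sum_j\lambda_j(A)\|Pu_j\|^2$ with weights in $[0,1]$ summing to $k$, then read off the equality conditions one eigenspace at a time. Your slack identity gives a rigorous version of the step the paper only asserts (``equality therefore forces\ldots''), and you prove the variational formula and the converse directly, where the paper cites Fan and calls the converse ``clear''.
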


\begin{proof}
	The variational formula is Ky Fan's maximum principle
	\cite[Theorem~1]{Fan}; see also \cite{OvertonWomersley} for related
	max-characterizations of sums of largest eigenvalues. It remains to describe
	the equality case.
	Let \(z_1,\ldots,z_{|X|}\) be an orthonormal eigenbasis of \(A\), with
	\(Az_i=\lambda_i(A)z_i\). If \(P\) is a rank-\(k\) orthogonal projection, then
	\[
	\operatorname{tr}(PA)
	=
	\sum_{i=1}^{|X|}\lambda_i(A)(Pz_i,z_i)
	=
	\sum_{i=1}^{|X|}\lambda_i(A)\|Pz_i\|^2.
	\]
	Here \(0\le \|Pz_i\|^2\le1\) for all \(i\), and
	$\sum_{i=1}^{|X|}\|Pz_i\|^2=\operatorname{tr}P=k.$
	Thus \(\operatorname{tr}(PA)\) is maximized by putting the whole image of \(P\)
	inside the eigenspaces corresponding to the \(k\) largest eigenvalues. Equality
	therefore forces \(\operatorname{im}P\) to contain every eigenspace with
	eigenvalue strictly larger than \(\lambda_k(A)\), and to have no component in
	any eigenspace with eigenvalue strictly smaller than \(\lambda_k(A)\). Equivalently,
	\[
	\bigoplus_{\lambda>\lambda_k(A)}\ker(A-\lambda I)
	\subseteq \operatorname{im}P
	\subseteq
	\bigoplus_{\lambda\ge\lambda_k(A)}\ker(A-\lambda I).
	\]
	Conversely, any projection satisfying this inclusion clearly attains
	\(\sum_{j=1}^k\lambda_j(A)\). This proves the equality condition.
\end{proof}
	
	\begin{lemma}
		\label{lem:k2}
		For every graph \(G\) and every \(k\ge1\), $
		\eps_k(G)\le k^2.$
	\end{lemma}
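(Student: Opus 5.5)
The plan is to apply the Ky Fan principle (Lemma~\ref{lem:kyfan}) to the Laplacian, using an edge-by-edge formula for $\operatorname{tr}(PL(G))$. Fix a rank-$k$ orthogonal projection $P$ on $\mathbb R^V$ and write $L(G)=\sum_{uv\in E}(e_u-e_v)(e_u-e_v)^{\mathsf T}$. Then
\[
\operatorname{tr}(PL(G))=\sum_{uv\in E}\|P(e_u-e_v)\|^2 .
\]
Each term is at most $\|e_u-e_v\|^2=2$. This crude bound only gives $\eps_k(G)\le e(G)$, so the real work is to exploit that $P$ has rank $k$.

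First I would settle the case $k\ge |V|$, or $k$ large relative to $|V|$. Here $\eps_k(G)\le 2e(G)-e(G)=e(G)$, and $e(G)\le\binom{|V|}{2}$. This already gives the bound whenever $|V|\le 2k$ or so. For the general case I would not use the edge-by-edge form but a complementary one. Write $P=\sum_{i=1}^k x_ix_i^{\mathsf T}$ with orthonormal $x_i$, and set $p_v=P_{vv}=\|Pe_v\|^2\in[0,1]$. Since $\sum_v p_v=k$, we get
\[
\operatorname{tr}(PL)=\operatorname{tr}(PD)-\operatorname{tr}(PA)=\sum_v d_v p_v-\sum_{uv\in E}2P_{uv}.
\]
The goal is to show this is at most $e(G)+k^2$.

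The cleanest route I see is to use $\lambda_j(L(G))\le$ something per vertex. Two tools do this: the Schur-type majorization $\sum_{j\le k}\lambda_j\le\sum_{j\le k}(\text{largest } k \text{ values of } d_v+\ldots)$, and the Grone--Merris bound $\sum_{j\le k}\lambda_j(L)\le\sum_{j\le k}d_j^*$, where $d_j^*=|\{v:d_v\ge j\}|$ is the conjugate degree sequence. The Grone--Merris conjecture was proved by Bai. Given that bound, the inequality becomes purely combinatorial:
\[
\sum_{j=1}^k d^*_j-e(G)\le k^2 .
\]
To check it, note $\sum_j d_j^*=\sum_v d_v=2e$. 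Then
\[
\sum_{j\le k}d_j^*=\sum_v\min(d_v,k).
\]
Let $H=\{v: d_v>k\}$. Then $\sum_v\min(d_v,k)=k|H|+\sum_{v\notin H}d_v$.

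It remains to show $k|H|+\sum_{v\notin H}d_v\le e+k^2$. Count edges: $e\ge \sum_{v\notin H}d_v-e(V\setminus H)+e(H)$, since $\sum_{v\notin H} d_v$ counts edges inside $V\setminus H$ twice. So it suffices to show
\[
k|H|+e(V\setminus H)\le e(H)+k^2+(\text{edges between } H \text{ and } V\setminus H)\;\ldots
\]
This is where the argument could get messy. I would instead use the simpler bound $e\ge \tfrac12\sum_v d_v\ge \tfrac12\bigl(|H|(k+1)+\sum_{v\notin H}d_v\bigr)$. Then, writing $h=|H|$ and $s=\sum_{v\notin H}d_v$, we need
\[
kh+s\le \tfrac12 h(k+1)+\tfrac12 s+k^2 ,
\]
that is, $\tfrac12 h(k-1)+\tfrac12 s\le k^2$. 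This is false in general, so the argument must use that $\min(d_v,k)$ terms are offset more cleverly. The correct step is to show $\sum_v\min(d_v,k)\le e+k^2$ by induction on edges. Deleting an edge $uv$ lowers $e$ by $1$ and lowers the left side by at most $2$. It lowers the left side by $2$ only when $d_u,d_v\le k$, and I would handle this by ordering deletions so that the low-degree pairs are controlled by $\binom{k}{2}$-type counts.

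I expect this combinatorial step to be the main obstacle. An alternative I would try first is the more direct claim: among the vertices with $d_v\le k$, the contribution $\sum\min(d_v,k)-e$ is at most $e(V\setminus H)+e(H,V\setminus H)-e$. This reduces to bounding $k|H|-e(H)\le k^2$, which holds because $k h-\binom h2\le k^2$ for all $h\ge 0$; indeed $kh-h(h-1)/2$ is maximized near $h=k$, with value about $k^2/2+k/2\le k^2$. So the plan is as follows. Grone--Merris majorization gives $\eps_k(G)\le \sum_v\min(d_v,k)-e$. Splitting edges by $H$ then gives
\[
\sum_v\min(d_v,k)-e\le kh+2e(\overline H)+e(H,\overline H)-e=kh-e(H)+e(\overline H).
\]
The stray $e(\overline H)$ term must be removed. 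This may require a sharper count, or falling back on the equivalent known result \cite[Corollary~2.10]{LewPartition} style partition-density arguments, which Lew used to prove exactly this $k^2$ bound.
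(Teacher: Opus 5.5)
\textbf{This is a genuine gap, and a sharper count cannot close it, because the combinatorial inequality you reduce to is false.}

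The Grone--Merris bound (Bai's theorem) gives $\eps_k(G)\le\sum_v\min(d_v,k)-e(G)$. Your final splitting is in fact an identity: with $H=\{v:d_v>k\}$,
\[
\sum_v\min(d_v,k)-e(G)=k|H|-e(G[H])+e(G-H).
\]
Both terms on the right are unbounded for fixed $k$:
\begin{enumerate}[label=\textup{(\alph*)}]
\item For $G=mK_{k+1}$ we have $H=\varnothing$, and the right-hand side equals $m\binom{k+1}{2}$.
\item For $G=mK_{1,k+1}$, $H$ is the independent set of centers, and the right-hand side equals $km$.
\item Already for $G=mK_2$ and $k=1$, the Grone--Merris estimate only gives $\eps_1(G)\le m$, while in fact $\eps_1(G)=2-m$.
\end{enumerate}
So $\sum_{j\le k}d_j^*-e(G)$ is not bounded by any function of $k$. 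The conjugate degree sequence cannot see that, in a graph with many components or many low-degree vertices, the $k$ largest Laplacian eigenvalues come from a small part of the graph. Consequently the ``stray'' term $e(G-H)$ cannot be removed, and no ordering of edge deletions can prove the target. The step ``$kh-\binom h2\le k^2$, hence $k|H|-e(G[H])\le k^2$'' also runs in the wrong direction: $e(G[H])\le\binom{|H|}{2}$ bounds $k|H|-e(G[H])$ from below, not from above.

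\textbf{What is missing is a genuinely spectral estimate.} The Ky Fan framework you start from is a legitimate setting, but the proposal never turns the rank-$k$ constraint into such a bound. The paper does not derive this estimate either; it imports it. For $k\le|V|$ the bound $\eps_k(G)\le k^2$ is quoted from Lew \cite{LewDegreeSums}. For $k>|V|$ one simply has $\eps_k(G)=e(G)\le\binom{|V|}{2}<k^2$.

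\textbf{Your fallback citation points to the wrong result.} \cite[Corollary~2.10]{LewPartition} is the vertex-cover bound $\eps_k(G)\le k\tau(G)$ of Lemma~\ref{lem:cover}. That does not yield $k^2$, because $\tau(G)$ can be much larger than $k$.

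\textbf{A minor point on the trivial range.} Your treatment is correct for $k\ge|V|$. However, the claim that $e(G)\le\binom{|V|}{2}$ suffices ``whenever $|V|\le 2k$'' is inaccurate, since $\binom{2k}{2}=2k^2-k>k^2$ for $k\ge2$.
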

	
	\begin{proof}
		If \(k\le |V|\), this is the graph case of \cite{LewDegreeSums}.
		If
		\(k>|V|\), then \(\eps_k(G)=e(G)\), and therefore
		\[
		\eps_k(G)=e(G)\le \binom{|V|}{2}<k^2.
		\]
	\end{proof}

	\begin{lemma}
		\label{lem:complement}
		Let \(G=(V,E)\) be a graph, and let \(\overline G\) be its complement.
		Then, for every \(0\le k\le |V|-1\),
		\[
		\eps_k(G)=\eps_{|V|-k-1}(\overline G)+|V|k-\binom{|V|}{2}.
		\]
	\end{lemma}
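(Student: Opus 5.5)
The identity follows from the standard relation between the Laplacian spectra of \(G\) and \(\overline G\). Write \(N=|V|\). Since \(L(G)+L(\overline G)=NI-J\), and all three matrices commute and share the eigenvector \(\one\), we can pass to the orthogonal complement of \(\one\). There \(L(\overline G)\) acts as \(NI-L(G)\). Hence, if \(\lambda_1\ge\cdots\ge\lambda_{N-1}\) are the eigenvalues of \(L(G)\) on \(\one^\perp\), then \(N-\lambda_{N-1}\ge\cdots\ge N-\lambda_1\) are those of \(L(\overline G)\) on \(\one^\perp\). We have \(\lambda_N(L(G))=0\) with eigenvector \(\one\). Because \(\lambda_1(L(G))\le N\) (for instance by Lemma~\ref{lem:AM}), all these complementary values are nonnegative, so they are still the top \(N-1\) eigenvalues of \(L(\overline G)\). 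Consequently \(\lambda_j(L(\overline G))=N-\lambda_{N-j}(L(G))\) for \(1\le j\le N-1\), and \(\lambda_N(L(\overline G))=0\).

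Now fix \(0\le k\le N-1\) and set \(m=N-k-1\), so that \(0\le m\le N-1\). Using the indexing above,
\[
\sum_{j=1}^{m}\lambda_j(L(\overline G))=\sum_{j=1}^{m}\bigl(N-\lambda_{N-j}(L(G))\bigr)=Nm-\sum_{i=k+1}^{N-1}\lambda_i(L(G)).
\]
The last step reindexes with \(i=N-j\), which runs from \(N-m=k+1\) to \(N-1\). Since \(\sum_{i=1}^{N-1}\lambda_i(L(G))=2e(G)\), the right-hand side equals \(Nm-2e(G)+\sum_{i=1}^{k}\lambda_i(L(G))\). Subtracting \(e(\overline G)=\binom N2-e(G)\) gives
\[
\eps_m(\overline G)=Nm-2e(G)+\sum_{i=1}^{k}\lambda_i(L(G))-\binom N2+e(G)=\eps_k(G)+N(N-k-1)-\binom N2 .
\]

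It remains to check the constant. We have \(N(N-k-1)-\binom N2=N(N-1)-Nk-\binom N2=\binom N2-Nk\), so \(\eps_k(G)=\eps_{m}(\overline G)+Nk-\binom N2\), as claimed. The edge cases \(k=0\) and \(m=0\) are covered by the empty-sum convention: when \(k=0\), both sides reduce to \(-e(G)=\binom N2-e(G)-\binom N2\). The only point needing care is that \(k\) and \(m\) both lie in \(\{0,\dots,N-1\}\). This guarantees \(\min\{m,N\}=m\), and that the zero eigenvalue coming from \(\one\) never enters either partial sum.
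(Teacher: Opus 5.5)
Your proof is correct and follows essentially the same route as the paper: use $L(G)+L(\overline G)=|V|I-J$ to identify the Laplacian eigenvalues of $\overline G$ on $\mathbf 1^\perp$ as $|V|-\lambda_i(L(G))$, then reindex the partial sum and use $e(\overline G)=\binom{|V|}{2}-e(G)$. Your explicit check that these values are nonnegative, so that the zero eigenvalue from $\mathbf 1$ never enters a partial sum, is a step the paper leaves implicit; it also follows directly from positive semidefiniteness of $L(\overline G)$.
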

	
	\begin{proof}
		Write the Laplacian eigenvalues of \(G\) as
		$\lambda_1(L(G))\ge\cdots\ge\lambda_{|V|-1}(L(G))\ge\lambda_{|V|}(L(G))=0.$
		Since
		\[
		L(G)+L(\overline G)=L(K_V)=|V|I-J,
		\qquad
		J=\mathbf 1_V\mathbf 1_V^{\mathsf T},
		\]
		with \(\mathbf 1_V\) denoting the all-ones vector in \(\mathbb R^V\), we may
		choose an orthonormal eigenbasis \(\xi_1,\ldots,\xi_{|V|}\) of \(L(G)\) such
		that \(\xi_{|V|}=|V|^{-1/2}\mathbf 1_V\) and
		\(\xi_i\perp\mathbf 1_V\) for \(i<|V|\).
		If
		\(L(G)\xi_i=\lambda_i(L(G))\xi_i\) with \(i<|V|\), then \(J\xi_i=0\), and hence
		\[
		L(\overline G)\xi_i
		=
		\bigl(|V|I-J-L(G)\bigr)\xi_i
		=
		\bigl(|V|-\lambda_i(L(G))\bigr)\xi_i.
		\]
		Thus, apart from the zero eigenvalue corresponding to \(\mathbf 1_V\), the
		Laplacian eigenvalues of \(\overline G\) are
		\[
		|V|-\lambda_{|V|-1}(L(G))\ge |V|-\lambda_{|V|-2}(L(G))\ge\cdots\ge
		|V|-\lambda_1(L(G)).
		\]
		Put \(\ell=|V|-k-1\).  Then
		\[
		\sum_{j=1}^{\ell}\lambda_j(\overline G)
		=
		|V|\ell-\sum_{h=k+1}^{|V|-1}\lambda_h(L(G))
		=
		|V|(|V|-k-1)-2e(G)+\sum_{j=1}^k\lambda_j(L(G)).
		\]
		Using \(e(\overline G)=\binom{|V|}{2}-e(G)\), we get
		\[
		\eps_{|V|-k-1}(\overline G)
		=
		\eps_k(G)+|V|(|V|-k-1)-\binom{|V|}{2},
		\]
		which is equivalent to the claimed identity.
	\end{proof}

	\begin{lemma}
		\label{lem:no-isolated-matching-extremal}
		Let \(G\) be an \(N\)-vertex graph with no isolated vertices and matching number
		\(\nu\ge1\). Write \(N=2\nu+s\), where \(s\ge2\). Then
		\[
		e(G)\le
		\max\left\{
		\binom{2\nu}{2}+s,\,
		\binom{\nu}{2}+\nu(N-\nu)
		\right\}.
		\]
		If the first term is larger, equality holds precisely when 
		$G\cong K_1\vee\bigl(K_{2\nu-1}\sqcup\overline{K_s}\bigr);$
		if the second term is larger, equality holds precisely when
		$G\cong K_\nu\vee\overline{K_{N-\nu}}.$
		If the two terms are equal, equality holds precisely for these two graphs,
		which coincide when \(\nu=1\).
	\end{lemma}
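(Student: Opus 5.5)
We prove Lemma~\ref{lem:no-isolated-matching-extremal} via the Gallai--Edmonds/Tutte--Berge structure (Lemma~\ref{lem:tutte-berge}), not Lemma~\ref{lem:erdos-gallai-matching} directly. The reason is that the graph $K_{2\nu+1}\sqcup\overline{K_{s-1}}$, which is extremal in Erd\H{o}s--Gallai, has isolated vertices and is therefore excluded here.

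\textbf{Step 1: structure from Tutte--Berge.} Fix a set $S$ attaining the minimum in Lemma~\ref{lem:tutte-berge}, and put $|S|=a$. Then $o(G-S)=N+a-2\nu=a+s$. Among all graphs with this barrier structure, $e(G)$ is maximized when:
\begin{itemize}
\item $S$ is a clique joined to everything;
\item every component of $G-S$ is complete;
\item there are no even components, since each can be merged into an odd component without lowering the matching bound and while gaining edges.
\end{itemize}
Adding these edges keeps $\nu$ unchanged, because the Tutte--Berge deficiency given by $S$ is preserved. It also creates no isolated vertices. So it suffices to bound graphs of the form $K_a\vee(K_{m_1}\sqcup\cdots\sqcup K_{m_{a+s}})$ with all $m_i$ odd and $\sum m_i=N-a$.

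\textbf{Step 2: optimize the component sizes.} By convexity of $\binom{m}{2}$, the edge count is maximized by making one component as large as possible and all others singletons. The large component then has order $N-a-(a+s-1)=2(\nu-a)+1$.

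The no-isolated-vertices condition matters only when $a=0$. In that case the singleton components would be isolated vertices, so each of them must instead belong to a component containing an edge. The cheapest such option is a $K_3$, or a $K_2$ attached elsewhere. Comparing these options, the $a=0$ configurations are always dominated by the $a=1$ configuration, since $s\ge2$. I expect this comparison to be the main obstacle, and I would check it by direct edge counting. For $a=0$ the graph is a disjoint union of odd cliques with total matching $\nu$ and at least $s$ components, and we compare its edge count with $\binom{2\nu}{2}+s$.

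For $a\ge1$, the count is
\[
f(a)=\binom a2+a(N-a)+\binom{2(\nu-a)+1}{2},
\]
which is convex in $a$ on $1\le a\le\nu$. Hence its maximum occurs at an endpoint:
\begin{itemize}
\item $a=1$ gives $f(1)=(N-1)+\binom{2\nu-1}{2}=\binom{2\nu}{2}+s$, attained by $K_1\vee(K_{2\nu-1}\sqcup\overline{K_s})$;
\item $a=\nu$ gives $\binom{\nu}{2}+\nu(N-\nu)$, attained by $K_\nu\vee\overline{K_{N-\nu}}$.
\end{itemize}

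\textbf{Step 3: equality.} For the equality characterization we trace back through each step:
\begin{itemize}
\item convexity in the component sizes is strict, so the sizes must be exactly one large component and singletons;
\item convexity of $f$ is strict, so only $a\in\{1,\nu\}$ is possible;
\item the Step 1 saturation must already hold, since any missing edge would strictly lower the count.
\end{itemize}
This yields exactly the two graphs named in the lemma, and they coincide when $\nu=1$. Finally, we verify the $a=0$ case strictly loses, which rules out any additional extremal graphs.
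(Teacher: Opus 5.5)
Your argument is the paper's own proof. Both take a Tutte--Berge barrier $S$ with $|S|=a$, saturate $S$ and the components of $G-S$, and use convexity in the component sizes to get $f(a)$. Both then use strict convexity of $f$ on $1\le a\le\nu$, with endpoint values $\binom{2\nu}{2}+s$ and $\binom{\nu}{2}+\nu(N-\nu)$, and treat $a=0$ separately.

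The one step you leave unverified is the $a=0$ comparison. It closes in one line, as in the paper. When $S=\varnothing$, the components of $G-S$ are those of $G$. After merging even components, $G$ has exactly $s$ odd components, each of order at least $3$ because there are no isolated vertices. So the ``$K_2$ attached elsewhere'' option does not arise and can be dropped. Then $3s\le N$ gives $\nu\ge s$, and convexity (with $s-1$ triangles and one clique of order $2\nu-2s+3$) gives $e(G)\le\binom{2\nu-2s+3}{2}+3(s-1)$. Finally,
\[
\binom{2\nu}{2}+s-\left(\binom{2\nu-2s+3}{2}+3(s-1)\right)=(2s-3)(2\nu-s)>0 .
\]
This is exactly where $s\ge 2$ enters. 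It shows the $a=0$ case loses strictly, which also completes your equality analysis.
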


	\begin{proof}
		By Lemma~\ref{lem:tutte-berge}, choose \(S\subseteq V\) such that
		\(o(G-S)-|S|=s\). Then \(G-S\) has
		\(|S|+s\) odd components. Suppose first that \(S\neq\emptyset\).
		Since \(N=2\nu+s\), after one vertex is assigned
		to each odd component, the number of remaining vertices is
		$N-|S|-(|S|+s)=2(\nu-|S|).$
		Thus \(1\le |S|\le\nu\).
		For fixed \(S\), the edge number is maximized by making \(S\) complete,
		joining it completely to \(V\setminus S\), and making the components of
		\(G-S\) complete.
		The \(|S|+s\) odd components must remain distinct, while
		any even component may be absorbed into an odd component.
		By convexity, the
		internal edge count of \(G-S\) is maximized when all remaining vertices above
		are placed in one odd component. Therefore
		\[
		e(G)\le
		\binom{|S|}{2}+|S|(N-|S|)
		+\binom{2(\nu-|S|)+1}{2}.
		\]
		The right-hand side is a strictly convex quadratic in \(|S|\), so its maximum
		on \(1\le |S|\le\nu\) is attained at an endpoint.
		Evaluating at
		\(|S|=1\) and \(|S|=\nu\), we get
		\[
		e(G)\le
		\max\left\{
		\binom{2\nu}{2}+s,\,
		\binom{\nu}{2}+\nu(N-\nu)
		\right\}.
		\]
		
		It remains to consider \(S=\emptyset\). Then \(G\) has \(s\) odd components.
		Since \(G\) has no isolated vertices, each odd component has order at least
		\(3\), this yields  \(\nu\ge s\).
		Again by convexity,
		\[
		e(G)\le \binom{2\nu-2s+3}{2}+3(s-1).
		\]
		But
		\[
		\binom{2\nu}{2}+s
		-\left(\binom{2\nu-2s+3}{2}+3(s-1)\right)
		=(2s-3)(2\nu-s)>0,
		\]
		and hence \(e(G)<\binom{2\nu}{2}+s\) in this case.
		
		Thus equality can occur only when \(S\neq\emptyset\).
		Strict convexity then
		forces \(|S|=1\) or \(|S|=\nu\). Equality in the preceding bounds also
		forces \(S\) to be complete and joined completely to \(V\setminus S\),
		and forces \(G-S\) to have exactly one nontrivial odd component, with all
		remaining odd components singletons.
		The two endpoint cases give
		$
		K_1\vee\bigl(K_{2\nu-1}\sqcup\overline{K_s}\bigr)$ and $
		K_\nu\vee\overline{K_{N-\nu}},$
		respectively. Conversely, these graphs have no isolated vertices, matching
		number \(\nu\), and exactly the corresponding two edge counts.
	\end{proof}
	
	\section{Odd Component Estimates and the Reduction Theorem}\label{sec:reduction}
	
	We next control the contribution of odd sets in a minimum-weight odd set cover.
	The
	key input is the following bound for a graph on an odd number of vertices,
	obtained from Lew's \(k^2\) theorem and the complement identity.
	We write \(\ind{\mathcal P}\) for the indicator of a statement \(\mathcal P\);
	that is, \(\ind{\mathcal P}=1\) if \(\mathcal P\) holds and
	\(\ind{\mathcal P}=0\) otherwise.
	\begin{lemma}\label{lem:one-packet}
		Let \(H\) be a graph on \(2q+1\) vertices, where \(q\ge1\).  Then, for every
		\(0\le k\le 2q+1\),
		\[
		\eps_k(H)\le kq+q\,\ind{k=2q}.
		\]
	\end{lemma}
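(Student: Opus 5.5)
The plan is a case split on \(k\) relative to \(q\) and \(2q\), with \(N=2q+1\). Small \(k\) is handled by Lemma~\ref{lem:k2} directly, large \(k\) by counting edges, and the middle range by passing to the complement via Lemma~\ref{lem:complement} and then applying Lemma~\ref{lem:k2} to \(\overline H\). There is no real obstacle beyond checking the quadratic inequality in the middle range.

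\emph{Small and endpoint values of \(k\).} For \(k=0\) both sides vanish. For \(1\le k\le q\), Lemma~\ref{lem:k2} gives \(\eps_k(H)\le k^2\le kq\). For \(k\in\{2q,2q+1\}\), the graph \(H\) has at most \(2q\) nonzero Laplacian eigenvalues, so \(\eps_k(H)=e(H)\le\binom{2q+1}{2}=q(2q+1)\). This equals \(kq+q\) when \(k=2q\), and equals \(kq\) when \(k=2q+1\). Hence the bound holds in these cases, including the extra term \(q\,\ind{k=2q}\) exactly where it is needed.

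\emph{Middle range \(q<k<2q\).} Put \(\ell=N-k-1=2q-k\), so that \(1\le \ell\le q-1\). By Lemma~\ref{lem:complement} and Lemma~\ref{lem:k2} applied to \(\overline H\),
\[
\eps_k(H)=\eps_\ell(\overline H)+(2q+1)k-\binom{2q+1}{2}\le \ell^2+(2q+1)(k-q)=\ell^2+(2q+1)(q-\ell).
\]
It remains to show that this is at most \(kq=(2q-\ell)q\). Expanding, the difference
\[
\ell^2+(2q+1)(q-\ell)-(2q-\ell)q
\]
simplifies to \(\ell^2-(q+1)\ell+q=(\ell-1)(\ell-q)\). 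This is nonpositive for \(1\le\ell\le q\), which gives \(\eps_k(H)\le kq\) throughout the middle range.

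The only point needing care is the bookkeeping of which range \(k=2q\) falls into. It must be treated through the edge count rather than the complement: there \(\ell=0\), and the complement bound no longer gives \(kq\). This is exactly why the indicator term appears in the statement.
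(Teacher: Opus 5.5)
Your proof is correct and takes essentially the same route as the paper. It uses the same case split on $k$: Lemma~\ref{lem:k2} for $1\le k\le q$, the complement identity with Lemma~\ref{lem:k2} applied to $\overline H$ in the middle range (your factorization $(\ell-1)(\ell-q)$ is the paper's $(\ell-1)(q-\ell)$ up to sign), and the edge count $e(H)\le\binom{2q+1}{2}$ for $k\in\{2q,2q+1\}$. One small slip: at $k=0$ the left side is $\eps_0(H)=-e(H)$, which need not vanish, but it is $\le 0$, so the bound still holds.
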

	
	\begin{proof}
		The case \(k=0\) follows from \(\eps_0(H)=-e(H)\le0\).  If
		\(1\le k\le q\), then Lemma~\ref{lem:k2} gives
		$\eps_k(H)\le k^2\le kq.$
		
		Now assume \(q<k\le2q-1\), and put \(\ell=2q-k\).
		Then
		\(1\le \ell\le q-1\).  Since \(|V(H)|=2q+1\), Lemmas~\ref{lem:k2}
		and~\ref{lem:complement} imply
		\[
		\eps_k(H)
		=
		\eps_{\ell}(\overline H)+(2q+1)k-\binom{2q+1}{2}
		\le
		\ell^2+(2q+1)k-(2q+1)q.
		\]
		Using \(k=2q-\ell\), we obtain
		\begin{equation}\label{eq:one-packet-gap}
			kq-\bigl[\ell^2+(2q+1)k-(2q+1)q\bigr]
			=
			(\ell-1)(q-\ell).
		\end{equation}
		The right-hand side of \eqref{eq:one-packet-gap} is nonnegative for \(1\le\ell\le q-1\), and therefore
		\(\eps_k(H)\le kq\) for \(q<k\le2q-1\).
		It remains to consider \(k=2q\) and \(k=2q+1\).  Since
		\[
		\eps_{2q}(H)=\eps_{2q+1}(H)=e(H)
		\le \binom{2q+1}{2}=(2q+1)q.
		\]
		This is \(kq+q\) when \(k=2q\), and \(kq\) when \(k=2q+1\).
		The proof is
		complete.
	\end{proof}
	
	To extend the bound from a single component to the entire subgraph $G_{\mathrm{odd}}$ consisting of multiple odd sets, we need to evaluate the eigenvalue sum for vertex-disjoint unions.
	\begin{lemma}
		\label{lem:disjoint-formula}
		Let \(G_1,\ldots,G_c\) be graphs on pairwise disjoint vertex sets, and let
		\(G=G_1\sqcup\cdots\sqcup G_c\).
		Then, for every \(0\le k\le |V|\),
		\[
		\eps_k(G)
		=
		\max\left\{
		\sum_{i=1}^c \eps_{k_i}(G_i):
		0\le k_i\le |V(G_i)|,\ \sum_{i=1}^c k_i=k
		\right\}.
		\]
	\end{lemma}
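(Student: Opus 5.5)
The plan is to use the block-diagonal structure of the Laplacian. Order the vertices of \(G\) so that \(L(G)=L(G_1)\oplus\cdots\oplus L(G_c)\). The Laplacian spectrum of \(G\), with multiplicities, is then the multiset union of the spectra of the \(L(G_i)\). Since the edge sets are disjoint, we also have \(e(G)=\sum_i e(G_i)\). The identity will follow by comparing the sum of the \(k\) largest elements of a multiset union with the best way of distributing \(k\) picks among the parts. The case \(k=0\) is immediate: both sides equal \(-e(G)\), because the only admissible choice is \(k_i=0\) for all \(i\).

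For the inequality ``\(\ge\)'', fix \(k_1,\ldots,k_c\) with \(0\le k_i\le |V(G_i)|\) and \(\sum k_i=k\). Choose orthonormal eigenvectors of \(L(G_i)\) for its \(k_i\) largest eigenvalues, and extend them by zero to \(\mathbb R^V\). Together these give \(k\) orthonormal vectors in \(\mathbb R^V\). Let \(P\) be the orthogonal projection onto their span. Then \(\operatorname{tr}(PL(G))=\sum_i\sum_{j\le k_i}\lambda_j(L(G_i))\), and Lemma~\ref{lem:kyfan} gives \(\sum_{j\le k}\lambda_j(L(G))\ge\sum_i\sum_{j\le k_i}\lambda_j(L(G_i))\). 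Subtracting \(e(G)=\sum_i e(G_i)\) yields \(\eps_k(G)\ge\sum_i\eps_{k_i}(G_i)\). Here \(\min\{k_i,|V(G_i)|\}=k_i\), so the definition of \(\eps_{k_i}\) matches.

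For the inequality ``\(\le\)'', look at the \(k\) largest eigenvalues of \(L(G)\), counted with multiplicity. Each one is an eigenvalue of some block. Assign each of them to a block, in such a way that the eigenvalues assigned to block \(i\) are exactly its \(k_i\) largest eigenvalues. This is possible because the spectrum is a multiset union: within each block, take the top elements, breaking ties consistently. The resulting \(k_i\) satisfy \(0\le k_i\le|V(G_i)|\) and \(\sum k_i=k\). With this assignment, \(\sum_{j\le k}\lambda_j(L(G))=\sum_i\sum_{j\le k_i}\lambda_j(L(G_i))\). Subtracting the edge counts gives equality for this choice of the \(k_i\), so the maximum is at least \(\eps_k(G)\). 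Combined with the previous paragraph, this proves the formula, and the maximum is attained.

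The only step needing care is the bookkeeping of ties and multiplicities in this selection argument. An equal eigenvalue appearing in several blocks may be split among them arbitrarily. The argument does not depend on how the split is made, because only the total sum matters. Nothing else is expected to be an obstacle.
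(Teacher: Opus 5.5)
Your proof is correct and follows the paper's route: the spectrum of \(L(G)\) is the multiset union of the spectra of the blocks, and \(e(G)=\sum_i e(G_i)\), so taking the \(k\) largest eigenvalues of \(L(G)\) amounts to taking the top \(k_i\) eigenvalues of each block. Your use of Lemma~\ref{lem:kyfan} for the ``\(\ge\)'' direction is valid but not needed, since any \(k\) eigenvalues of \(L(G)\), counted with multiplicity, sum to at most the \(k\) largest ones.
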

	
	\begin{proof}
		The Laplacian spectrum of \(G\) is the multiset union of the Laplacian
		spectra of \(G_1,\ldots,G_c\).
		Hence the sum of the \(k\) largest Laplacian
		eigenvalues of \(G\) is obtained by choosing, for each \(i\), \(k_i\) of the
		largest eigenvalues of \(G_i\), with \(\sum_i k_i=k\), so as to maximize the
		total sum.
		Since
		$e(G)=\sum_{i=1}^c e(G_i),$
		subtracting \(e(G)\) gives the formula, with the convention
		\(\eps_0(G_i)=-e(G_i)\).
	\end{proof}
	
	\begin{lemma}\label{lem:many-packets}
		Let \(G=G_1\sqcup\cdots\sqcup G_c\) be a vertex-disjoint union, where
		\(|V(G_i)|=2q_i+1\ge3\ (i=1,2,\ldots,c)\).
		Put \(q_*=\sum_{i=1}^c q_i\).
		If \(c=1\), then $\eps_k(G)\le kq_1$
		for every \(0\le k\le |V|\) with \(k\ne2q_1\).
		If \(c\ge2\), then
		$\eps_k(G)<kq_*$
		for every \(1\le k\le |V|\). The same conclusions hold after adding
		isolated vertices to \(G\).
	\end{lemma}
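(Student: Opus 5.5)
The plan is to combine Lemma~\ref{lem:disjoint-formula} with the single-packet bound of Lemma~\ref{lem:one-packet}. Isolated vertices contribute only zero eigenvalues, so they do not change \(\eps_k\) for \(k\le|V(G^+)|\). For larger \(k\), \(\eps_k\) equals \(e(G)\), which is at most \(|V|q_*\) by the argument below. So we may work with \(G=G_1\sqcup\cdots\sqcup G_c\) itself. Lemma~\ref{lem:disjoint-formula} writes \(\eps_k(G)=\sum_i\eps_{k_i}(G_i)\) for some split \(k=\sum_i k_i\) with \(0\le k_i\le 2q_i+1\). Lemma~\ref{lem:one-packet} then gives \(\eps_{k_i}(G_i)\le k_iq_i+q_i\ind{k_i=2q_i}\).

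\textbf{Case \(c=1\).} Here \(k_1=k\). If \(k\ne2q_1\), the indicator vanishes and \(\eps_k(G)\le kq_1\) directly.

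\textbf{Case \(c\ge2\).} Write \(q_{\max}=\max_i q_i\). Summing the packet bounds gives
\[
\eps_k(G)\le\sum_i k_iq_i+\sum_{i:\,k_i=2q_i}q_i .
\]
I want to show this is less than \(kq_*=\sum_i k_iq_*\). Since \(c\ge2\), we have \(q_*-q_i\ge q_{j}\ge1\) for any \(j\ne i\). Hence
\[
\sum_i k_i(q_*-q_i)\ \ge\ \sum_i k_i\bigl(q_*-q_i\bigr).
\]
For each \(i\) with \(k_i=2q_i\), that index alone contributes \(2q_i(q_*-q_i)\ge 2q_i>q_i\), which pays for the indicator term strictly. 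If no indicator is active, then \(k\ge1\) gives some \(k_i\ge1\), and its term \(k_i(q_*-q_i)\ge1>0\) yields strict inequality. In both situations \(\sum_i k_iq_*\) strictly exceeds the bound, so \(\eps_k(G)<kq_*\).

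The computation is short, and the main care needed is bookkeeping. First, the indices \(i\) with \(k_i=0\) contribute \(\eps_0(G_i)=-e(G_i)\le0\), which Lemma~\ref{lem:one-packet} already covers. Second, the strictness must survive when several packets carry an indicator, and it does because each indicator term \(q_i\) is dominated by its own \(2q_i(q_*-q_i)\). The isolated-vertex remark for \(k\) beyond the non-isolated order is the only other point to check. There \(\eps_k=e(G)\le\sum_i\binom{2q_i+1}{2}=\sum_i(2q_i+1)q_i\), and this is less than \(kq_*\) when \(c\ge2\), since \(k\ge\sum_i(2q_i+1)\) and \(q_*>q_i\). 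It is at most \(kq_1\) when \(c=1\).
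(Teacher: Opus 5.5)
Your argument is correct and is essentially the paper's: Lemma~\ref{lem:disjoint-formula} plus Lemma~\ref{lem:one-packet}, then the same regrouping of $kq_*-\bigl(\sum_i k_iq_i+\sum_{i\in I}q_i\bigr)$, with $I=\{i:k_i=2q_i\}$, into $\sum_{i\notin I}k_i(q_*-q_i)+\sum_{i\in I}q_i\bigl(2(q_*-q_i)-1\bigr)$, and the same case split on whether some indicator is active.

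The only deviation is how you handle added isolated vertices. You zero-pad for small $k$ and use a direct edge count $e(G)\le\sum_i(2q_i+1)q_i$ for large $k$, whereas the paper adds an extra block $k_0$. Your route works, but the split must be at $|V(G)|$ rather than at the non-isolated order, because your justification $k\ge\sum_i(2q_i+1)$ needs exactly that. Also, your displayed inequality $\sum_i k_i(q_*-q_i)\ge\sum_i k_i(q_*-q_i)$ is vacuous and can be dropped.
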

	
	\begin{proof}
		Assume first that no isolated vertices are added. By
		Lemma~\ref{lem:disjoint-formula}, there exist  integers \(k_i\), with
		\(0\le k_i\le 2q_i+1\) and \(\sum_i k_i=k\), such that
		$\eps_k(G)=\sum_{i=1}^c \eps_{k_i}(G_i).$
		By Lemma~\ref{lem:one-packet},
		\[
		\eps_k(G)\le
		\sum_{i=1}^c k_iq_i+\sum_{i\in I}q_i,
		\qquad
		I=\{1\le i\le c:k_i=2q_i\}.
		\]
		
		If \(c=1\), then the second sum is nonzero only when \(k=2q_1\), which is
		excluded. Hence \(\eps_k(G)\le kq_1\).
		If \(c\ge2\), then \(q_*-q_i\ge1\) for every \(1\le i\le c\). A direct calculation gives
		\[
		\begin{aligned}
			kq_*-
			\left(\sum_{i=1}^c k_iq_i+\sum_{i\in I}q_i\right)
			&=
			\sum_{i\notin I}k_i(q_*-q_i)
			+\sum_{i\in I}q_i\bigl(2(q_*-q_i)-1\bigr).
		\end{aligned}
		\]
		The right-hand side is positive: if some \(k_i>0\) with \(i\notin I\),
		then the first sum is positive;
		otherwise, since \(k\ge1\), some
		\(i\in I\) exists, and the second sum is positive.
		Thus
		$\eps_k(G)<kq_* .$
		
		Now let \(G'\) be obtained from \(G\) by adding isolated vertices.
		By
		Lemma~\ref{lem:disjoint-formula}, there are integers
		\(k_0,k_1,\ldots,k_c\) such that
		\[
		k_0+\sum_{i=1}^c k_i=k,
		\qquad
		0\le k_i\le 2q_i+1,
		\]
		and
		\[
		\eps_k(G')
		=
		\sum_{i=1}^c \eps_{k_i}(G_i).
		\]
		The preceding argument covers the \(c=1\) case.
		For \(c\ge2\), if \(k_0>0\), then substituting \(k=k_0+\sum_{i=1}^c k_i\) gives
		\[
		\begin{aligned}
			kq_*-
			\left(\sum_{i=1}^c k_iq_i+\sum_{i\in I}q_i\right)
			&=
			k_0q_*+
			\sum_{i\notin I}k_i(q_*-q_i)
			+\sum_{i\in I}q_i\bigl(2(q_*-q_i)-1\bigr)>0.
		\end{aligned}
		\]
		If \(k_0=0\), the previous strict argument applies directly. Hence
		\(\eps_k(G')<kq_*\) for every \(k\ge1\).
	\end{proof}

	\begin{theorem}\label{thm:reduction}
		Let \(G=(V,E)\) be a graph, let \(k\ge 1\), and let \(\mathscr C=\{v_1,\ldots,v_r;S_1,\ldots,S_t\}\) be a minimum-weight odd set cover of \(G\).
		After discarding odd sets of size \(1\), relabel the remaining sets so that \(|S_i|=2q_i+1\ge 3\) for \(1\le i\le c\).
		Unless \(c=1\) and \(|S_1|=k+1\), we have $\eps_k(G)\le k\nu.$
	\end{theorem}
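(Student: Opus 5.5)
We will split \(G\) along the decomposition \(G=G_{\mathrm{vc}}\cup G_{\mathrm{odd}}\) induced by \(\mathscr C\) and apply Lemma~\ref{lem:eps-subadditivity}. This gives
\[
\eps_k(G)\le \eps_k(G_{\mathrm{vc}})+\eps_k(G_{\mathrm{odd}}).
\]
Every edge of \(G_{\mathrm{vc}}\) meets \(\{v_1,\ldots,v_r\}\), so \(\tau(G_{\mathrm{vc}})\le r\). Lemma~\ref{lem:cover} then yields \(\eps_k(G_{\mathrm{vc}})\le kr\). (If \(k>|V|\), both sides are handled by the convention \(\eps_k=e\), and the same bound follows from \(e(G_{\mathrm{vc}})\le \) a sum of stars together with Lemma~\ref{lem:star-bound}. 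Alternatively, we split \(G_{\mathrm{vc}}\) into \(r\) edge-disjoint stars centred at the \(v_j\) and use Lemmas~\ref{lem:eps-subadditivity} and~\ref{lem:star-bound}. This second route works for every \(k\ge1\).)

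The graph \(G_{\mathrm{odd}}\) is the vertex-disjoint union of the \(G[S_i]\), \(1\le i\le c\), together with isolated vertices. Since the singleton odd sets carry no edges, \(\nu=w(\mathscr C)=r+q_*\) by Lemma~\ref{lem:edmonds}, where \(q_*=\sum_{i\le c}q_i\). It therefore suffices to show \(\eps_k(G_{\mathrm{odd}})\le kq_*\). We do this by cases.

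If \(c=0\), then \(G_{\mathrm{odd}}\) has no edges, so \(\eps_k(G_{\mathrm{odd}})=0\). If \(c\ge2\), Lemma~\ref{lem:many-packets} (in its version with isolated vertices added) gives \(\eps_k(G_{\mathrm{odd}})<kq_*\) for \(1\le k\le|V|\). If \(c=1\) and \(|S_1|\ne k+1\), that is, \(k\ne 2q_1\), the \(c=1\) clause of Lemma~\ref{lem:many-packets} gives \(\eps_k(G_{\mathrm{odd}})\le kq_1\).

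The main point needing care is the range \(k>|V|\), and \(k\) exceeding \(|S_i|\) in the packet lemma. For \(k\ge|V|\) we have \(\eps_k(G)=e(G)\). We can either apply the argument with \(k\) replaced by \(|V|\) and use monotonicity \(k\nu\ge|V|\nu\), or check directly: \(\eps_k\) of each piece equals its edge count, the star pieces contribute at most \(\deg\le |V|-1<k\) each, and a packet contributes at most \(\binom{2q_i+1}{2}=(2q_i+1)q_i\le kq_i\) when \(k\ge 2q_i+1\). We would state this short extension explicitly and then combine the three cases to conclude \(\eps_k(G)\le kr+kq_*=k\nu\), whenever we are not in the excluded case \(c=1\), \(|S_1|=k+1\).
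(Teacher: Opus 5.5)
Your proposal is correct and follows the paper's proof essentially step for step. You split $G=G_{\mathrm{vc}}\cup G_{\mathrm{odd}}$, bound $\eps_k(G_{\mathrm{vc}})\le kr$ by the vertex-cover lemma, bound $\eps_k(G_{\mathrm{odd}})\le kq_*$ by Lemma~\ref{lem:many-packets} (noting that the excluded case $c=1$, $|S_1|=k+1$ is exactly $k=2q_1$), and finish with subadditivity and Edmonds' formula $\nu=r+q_*$. Your extra treatment of $k>|V|$, either via $\eps_k=e$ directly or by reducing to $k=|V|$ where the exception cannot occur, is correct. It covers a range the paper passes over silently, since the auxiliary lemmas are only stated for $k\le|V|$.
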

	
	\begin{proof}
		Let \(G=G_{\mathrm{vc}}\cup G_{\mathrm{odd}}\) be the edge-disjoint decomposition induced by \(\mathscr C\).
		Since every edge of \(G_{\mathrm{vc}}\) meets \(\{v_1,\ldots,v_r\}\), we have \(\tau(G_{\mathrm{vc}})\le r\), and Lemma~\ref{lem:cover} yields
		\begin{equation}\label{eq:eps-vc-bound}
			\eps_k(G_{\mathrm{vc}})\le kr.
		\end{equation}
		
		The subgraph \(G_{\mathrm{odd}}\) is the vertex-disjoint union of \(G[S_1],\ldots,G[S_c]\) and isolated vertices. By assumption, we cannot have \(c=1\) and \(k=2q_1\).
		Thus, Lemma~\ref{lem:many-packets} ensures that
		\begin{equation}\label{eq:eps-odd-bound}
			\eps_k(G_{\mathrm{odd}}) \le k\sum_{i=1}^c q_i,
		\end{equation}
		where the bound trivially holds as \(0\) when \(c=0\).
		Using Lemma~\ref{lem:eps-subadditivity} and substituting \eqref{eq:eps-vc-bound} and \eqref{eq:eps-odd-bound}, we obtain
		\[
		\eps_k(G) \le \eps_k(G_{\mathrm{vc}}) + \eps_k(G_{\mathrm{odd}}) \le k\left(r + \sum_{i=1}^c q_i\right).
		\]
		Finally, since \(\mathscr C\) is a minimum-weight odd set cover, Lemma~\ref{lem:edmonds} implies \(\nu = w(\mathscr C) = r + \sum_{i=1}^c q_i\), which completes the proof.
	\end{proof}

	\section{Terminal One-Packet Absorption}\label{sec:terminal}
	
	We now isolate the dense subcase of the one-packet exception left by the
	reduction.
	In the proof of the main theorem, the remaining single-packet case
	is split according to the number of edges inside the packet.
	If \(H=G[S]\)
	satisfies \(e(H)\le k^2/2\), then \(H\) is supported on \(k+1\) vertices, so
	\(\eps_k(H)=e(H)\le k^2/2\), and the edges outside \(S\) are handled by the
	vertex-cover bound.
	Thus the only subcase requiring a separate local
	absorption argument is
	$e(G[S])>\frac{k^2}{2}.$
	The purpose of this section is to prove that absorption estimate.
	All local estimates below are made on a fixed finite ambient vertex set
	\(\Omega\).
	All Laplacian and projection matrices are indexed by \(\Omega\);
	graphs defined on smaller vertex sets are regarded as graphs on \(\Omega\) by
	adding isolated vertices.
	For \(x\in\R^\Omega\), write $
	\operatorname{supp}(x)=\{w\in\Omega:x_w\ne0\}.$
	For \(X\subseteq\Omega\), define
	\[
	\mathcal R_X
	=
	\left\{
	x\in\R^\Omega:
	\operatorname{supp}(x)\subseteq X,\
	\sum_{w\in X}x_w=0
	\right\}.
	\]
	
	When \(S\subseteq\Omega\) is fixed, set
	\[
	\mathcal U=\mathcal R_S,
	\qquad
	\mathcal W=\mathcal U^\perp .
	\]
	Then \(\dim\mathcal U=|S|-1\). We denote by
	\(\Pi_{\mathcal U}\) and \(\Pi_{\mathcal W}\) the orthogonal projections onto
	\(\mathcal U\) and \(\mathcal W\), respectively.
	Thus
	$\Pi_{\mathcal U}+\Pi_{\mathcal W}=I$ and 
	$x=\Pi_{\mathcal U}x+\Pi_{\mathcal W}x$ 
	for all $x\in\R^\Omega.$
	
	\begin{lemma}\label{lem:dense-gap}
		Let \(H\) be a graph on \(S\).
		If \(e(H)=2q^2+\Delta\) with
		\(\Delta>0\), then \(1\le\Delta\le q\), and
		$x^{\mathsf T}L(H)x\ge (q+\Delta)\|\Pi_{\mathcal U}x\|^2$ for all $x\in\mathbb R^\Omega.$
	\end{lemma}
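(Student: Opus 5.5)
The plan is to reduce the quadratic-form inequality to a lower bound on the algebraic connectivity of \(H\), and then get that bound from the complement identity together with Lemma~\ref{lem:AM}. Throughout, \(|S|=2q+1\); this is the terminal case \(|S|=k+1\) with \(k=2q\) left open by Theorem~\ref{thm:reduction}, so I take it as the standing convention of this section. The first claim is immediate: \(0<\Delta\) and \(e(H)\le\binom{2q+1}{2}=2q^2+q\) give \(1\le\Delta\le q\).

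\emph{Step 1: reduce to \(\mathcal U\).} The Laplacian \(L(H)\), viewed on \(\Omega\), vanishes on every vector supported outside \(S\) and on the indicator of \(S\). Together these span \(\mathcal W=\mathcal U^\perp\), so \(L(H)\Pi_{\mathcal W}=0\). By symmetry of \(L(H)\),
\[
x^{\mathsf T}L(H)x=(\Pi_{\mathcal U}x)^{\mathsf T}L(H)(\Pi_{\mathcal U}x).
\]
It therefore suffices to show \(y^{\mathsf T}L(H)y\ge(q+\Delta)\|y\|^2\) for all \(y\in\mathcal U\). Now \(\mathcal U\) is exactly the space of vectors on \(S\) orthogonal to the all-ones vector of \(S\), so by Rayleigh's principle this is equivalent to \(\lambda_{2q}(L(H))\ge q+\Delta\), where \(\lambda_{2q}\) is the second smallest eigenvalue of \(L(H)\) on the \(2q+1\) vertices of \(S\).

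\emph{Step 2: pass to the complement.} As in the proof of Lemma~\ref{lem:complement}, \(L(H)+L(\overline H)=(2q+1)I-J\) on \(\R^S\). Hence on \(\mathcal U\) the eigenvalues of \(L(H)\) are \(2q+1-\mu\), where \(\mu\) runs over the eigenvalues of \(L(\overline H)\) on \(\mathcal U\). In particular
\[
\lambda_{2q}(L(H))=2q+1-\lambda_1(L(\overline H)),
\]
so we need \(\lambda_1(L(\overline H))\le q+1-\Delta\).

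\emph{Step 3: bound \(\lambda_1(L(\overline H))\).} The complement has
\[
e(\overline H)=2q^2+q-e(H)=q-\Delta
\]
edges. If \(q=\Delta\), then \(\overline H\) is empty, so \(\lambda_1(L(\overline H))=0\le 1\). Otherwise, every connected component of \(\overline H\) with \(m\) vertices has at least \(m-1\) edges. So each component has at most \(q-\Delta+1\) vertices, and Lemma~\ref{lem:AM} gives \(\lambda_1(L(\overline H))\le q-\Delta+1\).

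Combining Steps 1–3 yields \(x^{\mathsf T}L(H)x\ge(q+\Delta)\|\Pi_{\mathcal U}x\|^2\). The only delicate point I expect is Step 1: one must check carefully that \(\mathcal W\) consists exactly of vectors constant on \(S\) (with arbitrary values off \(S\)), so that \(L(H)\) genuinely annihilates \(\mathcal W\). The remaining steps are routine counting.
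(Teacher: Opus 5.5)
Your proposal is correct and follows essentially the same route as the paper: you get $\Delta\le q$ from $e(H)\le\binom{2q+1}{2}$, apply Lemma~\ref{lem:AM} to $\overline H$ (whose components have at most $q-\Delta+1$ vertices), use the complement identity on $\mathcal U$, and use that $L(H)$ annihilates $\mathcal W$. The only difference is presentation: you phrase the bound on $\mathcal U$ as $\lambda_{2q}(L(H))=2q+1-\lambda_1(L(\overline H))$, whereas the paper works directly with the quadratic form $y^{\mathsf T}L(H)y$.
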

	
	\begin{proof}
		Since \(e(H)\le \binom{2q+1}{2}=2q^2+q\), we have \(1\le\Delta\le q\).
		Let
		\(\overline H\) be the complement of \(H\) on \(S\). Then \(e(\overline H)=q-\Delta\) and every component of \(\overline H\) has at
		most \(q-\Delta+1\) vertices.
		By Lemma~\ref{lem:AM},
		$\lambda_1(L(\overline H))\le q-\Delta+1.$
		For \(y\in\mathcal U\), the complement identity on \(S\) gives
		$L(H)+L(\overline H)=(2q+1)I-J,$
		and \(Jy=0\). Hence
		\[
		y^{\mathsf T}L(H)y
		=(2q+1)\|y\|^2-y^{\mathsf T}L(\overline H)y
		\ge (q+\Delta)\|y\|^2 .
		\]
		Now let \(x\in\mathbb R^\Omega\), and write \(x=y+z\), where
		\(y=\Pi_{\mathcal U}x\) and \(z=\Pi_{\mathcal W}x\). Since \(z\in\mathcal W\),
		its restriction to \(S\) is constant;
		as all edges of \(H\) lie inside \(S\),
		\(L(H)z=0\). Therefore
		\[
		x^{\mathsf T}L(H)x
		=y^{\mathsf T}L(H)y
		\ge(q+\Delta)\|y\|^2
		=(q+\Delta)\|\Pi_{\mathcal U}x\|^2 .
		\]
	\end{proof}

	The next estimate is the local absorption mechanism. It refines the usual star
	bound by keeping track of the \(\mathcal W\)-mass of the optimizing projection.
	The operator $L(T)-(2q+1)\Pi_{\mathcal W}$ penalizes precisely the part of the
	projection that is invisible to the dense packet on $S$. Thus the estimate below
	can be viewed as a tilted version of the elementary star bound: the usual error
	$e(T)$ is paid only after the projection has been charged by its
	$\mathcal W$-component. In the proof, the star is reduced to invariant blocks
	according to whether its center and leaves lie in $S$; each block has at most
	the amount of positive spectrum allowed by the number of star edges it contains.
	
	\begin{lemma}\label{lem:tilted-star}
		Let \(T\) be a star in \(\Omega\) with no edge contained in \(S\).
		Then, for
		every rank-\(2q\) orthogonal projection matrix \(P\) on \(\R^\Omega\),
		$\tr(P L(T))-e(T)
		\le (2q+1)\tr(P\Pi_{\mathcal W}).$
	\end{lemma}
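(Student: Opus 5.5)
The plan is to remove $P$ with Ky Fan and then bound a spectrum block by block. Put $d=e(T)$ and
\[
M=L(T)-(2q+1)\Pi_{\mathcal W}.
\]
The claim is $\tr(PM)\le d$ for every rank-$2q$ projection $P$. By Lemma~\ref{lem:kyfan}, $\tr(PM)$ is at most the sum of the $2q$ largest eigenvalues of $M$. That sum is at most the sum of the positive eigenvalues of $M$ (taking $2q\le|\Omega|$; otherwise there is no such $P$). So it suffices to prove
\[
\sum_{\mu\in\operatorname{Spec}(M),\,\mu>0}\mu\;\le\; d .
\]
I will split $\R^\Omega$ into $M$-invariant orthogonal blocks and count positive spectrum in each block.

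Let $c$ be the centre of $T$, $A$ the set of leaves in $S$, and $B$ the set of leaves outside $S$, with $a=|A|$ and $b=|B|$. Since $T$ has no edge inside $S$, either $c\in S$ and $A=\emptyset$, or $c\notin S$. Take the case $c\notin S$; the case $c\in S$ is handled by the same scheme with the roles of the blocks adjusted. Recall $\mathcal W=\mathcal U^\perp$ consists of the vectors constant on $S$, and $\Pi_{\mathcal W}$ acts as the identity on coordinates outside $S$. The blocks are:
\begin{enumerate}[label=\textup{(\alph*)}]
\item vectors supported on $A$ with zero sum: they lie in $\mathcal U$, $L(T)$ acts as the identity on them, and $\Pi_{\mathcal W}$ kills them, so this gives eigenvalue $1$ with multiplicity $a-1$;
\item vectors in $\mathcal U$ supported on $S\setminus A$ and orthogonal to $\one_{S\setminus A}$: eigenvalue $0$;
\item vectors supported on $B$ with zero sum: $L(T)$ gives $1$ and $\Pi_{\mathcal W}$ gives $1$, so the eigenvalue is $1-(2q+1)<0$;
\item vectors supported on $\Omega\setminus(S\cup B\cup\{c\})$: eigenvalue $-(2q+1)<0$;
\item the remaining block $\mathcal B=\operatorname{span}\{e_c,\one_B,\one_S,u_A\}$, where $u_A=\one_A-\tfrac{a}{2q+1}\one_S$ is the $\mathcal U$-part of $\one_A$.
\end{enumerate}
One checks these blocks are $M$-invariant and that together they exhaust $\R^\Omega$. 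Blocks (a)--(d) contribute at most $a-1$ positive spectrum. Hence it remains to show that the positive eigenvalues of $M|_{\mathcal B}$ sum to at most $b+1$ when $a\ge1$, and to at most $b$ when $a=0$.

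For the block $\mathcal B$, note that $e_c,\one_B,\one_S\in\mathcal W$ span a $3$-dimensional subspace $\mathcal B\cap\mathcal W$ on which $M$ is $L(T)-(2q+1)I$. By a Cauchy-interlacing (inertia) argument I expect $M|_{\mathcal B}$ to have at most one positive eigenvalue. In any case, the quadratic form $x^{\mathsf T}Mx$ on this $3$-dimensional $\mathcal W$-part is $L(T)$ restricted there minus $(2q+1)\|x\|^2$. The star Laplacian restricted to $\operatorname{span}\{e_c,\one_B,\one_A\}$ has largest eigenvalue $d+1=a+b+1$, and the penalty $-(2q+1)$ appears in every direction except $u_A$. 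I would then write $M|_{\mathcal B}$ as an explicit $4\times4$ matrix in an orthonormalized basis. Two facts should give the bound: first, $\tr(M|_{\mathcal B})$ equals the sum of the diagonal star degrees on these directions minus $3(2q+1)$; second, the sum of the negative eigenvalues is at most $-3(2q+1)+(\text{something small})$. Alternatively, I would directly bound the largest eigenvalue, via a Rayleigh quotient split into its $u_A$-component and its $\mathcal W$-component, by $1+b$. That split would rest on $\|u_A\|^2=a(2q+1-a)/(2q+1)$ together with the cross term $\langle L(T)u_A,\cdot\rangle$ only reaching $e_c$.

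The case $c\in S$ is easier. All leaves lie outside $S$, so they are fully penalized by $\Pi_{\mathcal W}$, and the only direction escaping the penalty is the $\mathcal U$-component of $e_c$, whose mass is $2q/(2q+1)$. A $2\times2$ or $3\times3$ computation on $\operatorname{span}\{e_c-\tfrac1{2q+1}\one_S,\one_S,\one_B\}$ should show that the positive spectrum is at most $d$. I expect the main obstacle to be this final small-block eigenvalue estimate, with constants sharp enough to allow $a$ up to $2q+1$. If the crude positive-part bound fails there, I would instead keep Ky Fan's restriction to exactly $2q$ eigenvalues and use $\dim\mathcal U=2q$ to argue that $P$ can capture only one dimension of $\mathcal B$.
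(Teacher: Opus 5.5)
\textbf{Gap: the block estimate is missing.} Your setup is the paper's. You use the Ky Fan reduction to the top $2q$ eigenvalues of $M=L(T)-m\Pi_{\mathcal W}$ (with $m=2q+1$) and then pass to the positive part. Your blocks (a)--(e) are exactly the paper's $\mathcal R_{Y_S}$, $\mathcal R_{S_0}$, $\mathcal R_{Y_0}$, the discarded outside vertices, and $\operatorname{span}\{\mathbf p_0,\mathbf s,\mathbf y_0,\mathbf c\}$, where $\mathbf p_0$ is proportional to your $u_A$. However, the one step that carries the content of the lemma is never proved: bounding the positive spectrum of $M|_{\mathcal B}$ by $b+1$. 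The route you sketch for it rests on a false expectation.

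\textbf{The block can have two positive eigenvalues.} Interlacing against the $3$-dimensional part $\mathcal B\cap\mathcal W$ only gives at most two, and two do occur. For $q=1$, $a=1$, $b=3$ the block has characteristic polynomial $(t+3)(t^3-6t+2)$, whose cubic factor has roots near $2.26$, $0.34$ and $-2.60$. More generally, whenever $0<a<m\le b$, the cubic factor $g$ of \eqref{eq:gabm} satisfies $g(0)=(m-a)(b-m+1)>0>-ab=g(1)$. So it has a root in $(0,1)$ and another in $(1,b+1)$.
\begin{itemize}
\item Bounding only the largest eigenvalue of the block by $b+1$ is therefore not enough.
\item Your fallback does not help. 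After Ky Fan the projection has disappeared, and the $2q$ largest eigenvalues of $M$ contain both positive block eigenvalues as soon as $a+1\le 2q$.
\item Your trace remark points the wrong way. An upper bound on the negative eigenvalues gives a lower bound on the positive part. The correct-direction bound (each eigenvalue $\ge -m$) is too weak when three eigenvalues are nonpositive.
\end{itemize}

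\textbf{What is needed.} You need a bound of $b+1$ on the \emph{sum} of the positive eigenvalues of the block. The paper gets it from the explicit matrix in the orthonormal basis $(\mathbf p_0,\mathbf s,\mathbf y_0,\mathbf c)$, that is, normalized $u_A$, $\one_S$, $\one_B$, $e_c$. Its characteristic polynomial is $(t+m)g(t)$, and the argument runs as follows.
\begin{itemize}
\item Vieta excludes three positive roots: a positive root sum forces $b>m-1$, while a positive root product forces $b<m-1$.
\item Since $g(1)<0<g(b+1)=b(m-a)(b+m+1)$, a root above $b+1$ would force a third positive root. So the largest root is at most $b+1$, which settles the one-positive-root case.
\item With two positive roots, the third satisfies $\theta_3\ge -m$ because $M\succeq -mI$. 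Vieta then gives $\theta_1+\theta_2=a+b-2m+2-\theta_3\le a+b-m+2\le b+1$, using $a\le m-1$.
\end{itemize}
Adding the $a-1$ unit eigenvalues from block (a) gives exactly $d$.

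\textbf{Degenerate cases still to do.} You also still have to carry out the cases you only announce:
\begin{itemize}
\item $a=m$, where $u_A=0$, $M=I$ on $\mathcal U$, and the remaining block has polynomial $(t+m)(t+m-1)(t-b-1)$;
\item $b=0$;
\item $c\in S$, where the block on $\operatorname{span}\{\Pi_{\mathcal U}e_c,\one_S,\one_B\}$ has polynomial $(t-d)(t+m)(t+m-1)$.
\end{itemize}
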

	
	\begin{proof}
		Set
		\[
		m=|S|=2q+1,\qquad d=e(T),\qquad
		M_T=L(T)-m\Pi_{\mathcal W}.
		\]
		By Lemma~\ref{lem:kyfan}, it suffices to prove
		\begin{equation}\label{eq:tilted-star-target}
			\sum_{j=1}^{m-1}\lambda_j(M_T)\le d .
		\end{equation}
		Vertices outside \(S\cup V(T)\)
		contribute only eigenvalues \(-m\), so they will be ignored.
		The case
		\(d=0\) is immediate from \(M_T=-m\Pi_{\mathcal W}\).
		
		Put
		$\mathbf s=m^{-1/2}\one_S.$
		We repeatedly use the following elementary fact: if a symmetric matrix
		preserves each summand in an orthogonal decomposition, then its eigenvalues
		are the union of the eigenvalues of the corresponding blocks.
		First suppose that the center \(v\) of \(T\) lies in \(S\). Let \(Y\) be the
		leaf set.
		Since \(T\) has no edge contained in \(S\), we have
		\(Y\cap S=\varnothing\) and \(|Y|=d\).
		Let
		\[
		\mathbf p=
		\frac{\Pi_{\mathcal U}\mathbf e_v}
		{\|\Pi_{\mathcal U}\mathbf e_v\|},
		\qquad
		\mathbf y=d^{-1/2}\one_Y .
		\]
	where \(\mathbf e_v\) denotes the standard basis vector indexed by \(v\).
		Then
		\[
		\R^{S\cup Y}
		=
		\{z\in\mathcal U:z_v=0\}
		\oplus
		\mathcal R_Y
		\oplus
		\operatorname{span}\{\mathbf p,\mathbf s,\mathbf y\}
		\]
		is an orthogonal \(M_T\)-invariant decomposition.
		On the first two summands,
		\(M_T=0\) and \(M_T=(1-m)I\), respectively. On the last summand, in the
		ordered orthonormal basis \((\mathbf p,\mathbf s,\mathbf y)\), the matrix of
		\(M_T\) is
		\[
		\begin{pmatrix}
			\frac{d(m-1)}m&
			\frac{d\sqrt{m-1}}m&
			-\sqrt{\frac{d(m-1)}m}
			\\[2mm]
			\frac{d\sqrt{m-1}}m&
			\frac dm-m&
			-\sqrt{\frac dm}
			\\[2mm]
			-\sqrt{\frac{d(m-1)}m}&
			-\sqrt{\frac dm}&
			1-m
		\end{pmatrix}.
		\]
		Its characteristic polynomial is $(t-d)(t+m)(t+m-1).$
		Thus the largest \(m-1\) eigenvalues have sum \(d\), proving
		\eqref{eq:tilted-star-target} in this case.
		Now suppose that the center \(v\notin S\). Let \(Y\) be the leaf set and put
		\[
		\mathbf c=\mathbf e_v,\qquad
		Y_S=Y\cap S,\qquad
		Y_0=Y\setminus S,\qquad
		a=|Y_S|,\qquad
		b=|Y_0|.
		\]
		Then \(d=a+b\).  We split the rest of the proof according to how many leaves of
		\(T\) lie in \(S\).
		\medskip
		\noindent\textbf{Case 1: \(a=0\).} In this case, no vertex of \(T\) lies in \(S\).
		Hence \(M_T=0\) on \(\mathcal U\), while
		on the star coordinates \(M_T=L(T)-mI\). The spectrum of \(M_T\) on these coordinates is
		\[
		\left\{\,b+1-m,\;(1-m)^{[b-1]},\;-m\,\right \}.
		\]
		Together with the \(m-1\) zero eigenvalues on \(\mathcal U\), this gives
		\[
		\sum_{j=1}^{m-1}\lambda_j(M_T)
		\le \max\{b+1-m,0\}\le b=d .
		\]
		
		\medskip
		\noindent\textbf{Case 2: \(a=m\).}  In this case, \(Y_S=S\), and \(M_T=I\) on \(\mathcal U\).
		If \(b=0\), then \(d=m\), and the remaining block is
		\(\operatorname{span}\{\mathbf s,\mathbf c\}\).
		In the ordered basis
		\((\mathbf s,\mathbf c)\), this block is
		\[
		\begin{pmatrix}
			1-m&-\sqrt m\\
			-\sqrt m&0
		\end{pmatrix},
		\]
		whose characteristic polynomial is \((t-1)(t+m)\).
		Hence the largest \(m-1\)
		eigenvalues have sum \(m-1\le m=d\).
		
		If \(b>0\), set
		$\mathbf y_0=b^{-1/2}\one_{Y_0}.$
		Then \(M_T=(1-m)I\) on \(\mathcal R_{Y_0}\).
		The remaining block is
		\(\operatorname{span}\{\mathbf s,\mathbf y_0,\mathbf c\}\), where, in the
		ordered basis \((\mathbf s,\mathbf y_0,\mathbf c)\), the matrix is
		\[
		\begin{pmatrix}
			1-m&0&-\sqrt m\\
			0&1-m&-\sqrt b\\
			-\sqrt m&-\sqrt b&b
		\end{pmatrix}.
		\]
		Its characteristic polynomial is
		$(t+m)(t+m-1)(t-b-1).$
		Therefore
		\[
		\sum_{j=1}^{m-1}\lambda_j(M_T)
		\le (b+1)+(m-2)< m+b=d .
		\]
		
		\medskip
		\noindent\textbf{Case 3: \(0<a<m\).}
		Put \(S_0=S\setminus Y_S\).
		The spaces
		$\mathcal R_{Y_S},
		\mathcal R_{S_0},$
		and when \(b>0\), also \(\mathcal R_{Y_0}\), are mutually orthogonal and
		\(M_T\)-invariant.
		On these spaces,
		\[
		M_T|_{\mathcal R_{Y_S}}=I,\qquad
		M_T|_{\mathcal R_{S_0}}=0,\qquad
		M_T|_{\mathcal R_{Y_0}}=(1-m)I \quad (b>0).
		\]
		The remaining direction in \(\mathcal U\) is
		\[
		\mathbf p_0
		=
		\sqrt{\frac{m-a}{am}}\,\one_{Y_S}
		-
		\sqrt{\frac{a}{(m-a)m}}\,\one_{S_0}.
		\]
		
		First assume \(b=0\).
		The remaining block is
		\(\operatorname{span}\{\mathbf p_0,\mathbf s,\mathbf c\}\). In the ordered
		basis \((\mathbf p_0,\mathbf s,\mathbf c)\), the matrix is
		\[
		\begin{pmatrix}
			\frac{m-a}{m}&
			\frac{\sqrt{a(m-a)}}m&
			-\sqrt{\frac{a(m-a)}m}
			\\[1mm]
			\frac{\sqrt{a(m-a)}}m&
			\frac am-m&
			-\frac a{\sqrt m}
			\\[1mm]
			-\sqrt{\frac{a(m-a)}m}&
			-\frac a{\sqrt m}&
			a-m
		\end{pmatrix}.
		\]
		Its characteristic polynomial is $
		(t+m)(t-1)(t+m-a).$
		Hence the positive eigenvalues have total sum at most
		$(a-1)+1=a=d.$
		Since the sum of the largest \(m-1\) eigenvalues is bounded above by the sum of
		all positive eigenvalues, this proves \eqref{eq:tilted-star-target}.
		It remains to consider \(b>0\). Set $
		\mathbf y_0=b^{-1/2}\one_{Y_0}.$
		The remaining block is
		$\operatorname{span}\{\mathbf p_0,\mathbf s,\mathbf y_0,\mathbf c\}.$
		In the ordered basis \((\mathbf p_0,\mathbf s,\mathbf y_0,\mathbf c)\), its
		matrix is
		\[
		\begin{pmatrix}
			\frac{m-a}{m}&
			\frac{\sqrt{a(m-a)}}m&
			0&
			-\sqrt{\frac{a(m-a)}m}
			\\[1mm]
			\frac{\sqrt{a(m-a)}}m&
			\frac am-m&
			0&
			-\frac a{\sqrt m}
			\\[2mm]
			0&
			0&
			1-m&
			-\sqrt b
			\\[1mm]
			-\sqrt{\frac{a(m-a)}m}&
			-\frac a{\sqrt m}&
			-\sqrt b&
			a+b-m
		\end{pmatrix}.
		\]
		Its characteristic polynomial is \((t+m)g(t)\), where
		\begin{equation}\label{eq:gabm}
			\begin{aligned}
				g(t)
				&=t^3+(2m-2-a-b)t^2  \\
				&\quad +(m^2-3m+1-am+2a-bm+b)t
				+(m-a)(b-m+1).
			\end{aligned}
		\end{equation}
		
		Suppose that all three roots
		\(\theta_1,\theta_2,\theta_3\) of \(g(t)\) are positive.
		By Vieta's
		formula,
		\[
		\theta_1+\theta_2+\theta_3=a+b-2m+2>0,
		\qquad
		\theta_1\theta_2\theta_3=-(m-a)(b-m+1)>0.
		\]
		The first identity gives \(b>2m-2-a\ge m-1\), while the second, since
		\(0<a<m\), gives \(b<m-1\), a contradiction.
		Hence \(g(t)\) has at most
		two positive roots.
		Moreover,
		\[
		g(1)=-ab<0,
		\qquad
		g(b+1)=b(m-a)(b+m+1)>0.
		\]
		Thus \(g(t)\) has a positive root in \((1,b+1)\).
		Since its leading
		coefficient is positive and it has at most two positive roots, its largest
		positive root is at most \(b+1\).
		The positive eigenvalues of this block are precisely the positive roots of
		\(g(t)\), since the remaining root of the block polynomial is \(-m\).
		We
		show that their sum is at most \(b+1\).
		
		If \(g(t)\) has only one positive root, then this root is at most \(b+1\)
		by the preceding paragraph.
		Now suppose that \(g(t)\) has two positive
		roots, and write its roots as
		$\theta_1\ge \theta_2>0\ge \theta_3.$
		Since
		\[
		z^{\mathsf T}M_Tz
		=
		z^{\mathsf T}L(T)z-m\|\Pi_{\mathcal W}z\|^2
		\ge -m\|z\|^2,
		\]
		every eigenvalue of \(M_T\) is at least \(-m\), and hence \(\theta_3\ge -m\).
		By Vieta's formula,
		$\theta_1+\theta_2+\theta_3=a+b-2m+2.$
		Thus
		$
		\theta_1+\theta_2
		\le a+b-m+2
		\le b+1,
		$
		because \(a\le m-1\). Hence, in both cases, the last block contributes at most
		\(b+1\) to the sum of positive eigenvalues.
		Adding the \(a-1\) eigenvalues equal to \(1\) on \(\mathcal R_{Y_S}\), we get
		\[
		\sum_{\lambda_j(M_T)>0}\lambda_j(M_T)
		\le (a-1)+(b+1)=a+b=d.
		\]
		Therefore
		\[
		\sum_{j=1}^{m-1}\lambda_j(M_T)
		\le
		\sum_{\lambda_j(M_T)>0}\lambda_j(M_T)
		\le d,
		\]
		which proves \eqref{eq:tilted-star-target}.
	\end{proof}
	
	We now combine the spectral gap from Lemma \ref{lem:dense-gap} and the projection estimate from Lemma \ref{lem:tilted-star} to show that a single star can be efficiently absorbed into a dense packet.
	
	\begin{lemma}\label{lem:one-star}
		Let \(q\ge1\), let \(S\subseteq\Omega\) have size \(2q+1\), and let \(H\) be a
		graph on \(S\), viewed on \(\Omega\) by adding isolated vertices.
		If
		\(e(H)>2q^2\) and \(T\) is a non-empty star on \(\Omega\) with no edge contained
		in \(S\), then
		$\eps_{2q}(H\cup T)<2q^2+2q.$
	\end{lemma}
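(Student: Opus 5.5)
Write \(e(H)=2q^2+\Delta\) with \(\Delta\ge1\), and let \(P\) be a rank-\(2q\) orthogonal projection on \(\R^\Omega\) attaining \(\sum_{j=1}^{2q}\lambda_j(L(H\cup T))=\tr(PL(H\cup T))\), as provided by Lemma~\ref{lem:kyfan}. Since \(H\) and \(T\) are edge-disjoint, \(L(H\cup T)=L(H)+L(T)\) and \(e(H\cup T)=e(H)+e(T)\). Hence
\[
\eps_{2q}(H\cup T)=\bigl[\tr(PL(H))-e(H)\bigr]+\bigl[\tr(PL(T))-e(T)\bigr].
\]
I will bound the two brackets separately. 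Lemma~\ref{lem:tilted-star} handles the second: it is at most \((2q+1)\tr(P\Pi_{\mathcal W})\). The task is to show that the first bracket is small enough to pay this \(\mathcal W\)-charge.

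For the first bracket, \(L(H)\) vanishes on \(\mathcal W\), since vectors in \(\mathcal W\) are constant on \(S\). It preserves \(\mathcal U\), and its eigenvalues there are \(\mu_1\ge\cdots\ge\mu_{2q}\), the nonzero-part spectrum of \(H\) on \(S\), with \(\sum\mu_i=2e(H)\). By Lemma~\ref{lem:dense-gap}, each \(\mu_i\ge q+\Delta\). Therefore \(L(H)\preceq \mu_1\Pi_{\mathcal U}\). A better bound uses \(L(H)=\sum_i\mu_i u_iu_i^{\mathsf T}\) with \(u_i\) an orthonormal basis of \(\mathcal U\):
\[
\tr(PL(H))=\sum_i\mu_i\|Pu_i\|^2,\qquad 0\le\|Pu_i\|^2\le1,\qquad \sum_i\|Pu_i\|^2=\tr(P\Pi_{\mathcal U})=2q-\tr(P\Pi_{\mathcal W}).
\]
Put \(w=\tr(P\Pi_{\mathcal W})\). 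The total deficit \(\sum_i(1-\|Pu_i\|^2)\) equals \(w\), and each unit of deficit costs at least \(\mu_{2q}\ge q+\Delta\). Hence
\[
\tr(PL(H))\le 2e(H)-(q+\Delta)w,
\]
and so \(\tr(PL(H))-e(H)\le e(H)-(q+\Delta)w\).

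Adding the two bounds gives
\[
\eps_{2q}(H\cup T)\le e(H)+(2q+1-q-\Delta)w=2q^2+\Delta+(q+1-\Delta)w.
\]
This is too weak when \(w\) is large, so I need a second, trivial bound. By Lemma~\ref{lem:star-bound}, \(\eps_{2q}(T)\le 2q\). By Lemma~\ref{lem:one-packet}, \(\eps_{2q}(H)\le 2q^2+q\). Subadditivity (Lemma~\ref{lem:eps-subadditivity}) then yields only \(2q^2+3q\), which is not enough. I would therefore sharpen the first-bracket estimate instead. Since the remaining \(2q-w\) units of \(P\)'s mass on \(\mathcal U\) are at most the top eigenvalues, the full bound is \(\tr(PL(H))\le\sum_{i\le 2q}\mu_i-(q+\Delta)w\). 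The target \(<2q^2+2q\) holds whenever \(\Delta+(q+1-\Delta)w<2q\).

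The main obstacle is the regime where \(w\) is close to \(1\) or larger. For this I would use a complementary estimate: \(\tr(P\Pi_{\mathcal W})\le\) something forced by the fact that \(T\)'s Laplacian has top eigenvalue \(d+1\) and eigenvalue \(1\) otherwise. Concretely, I would bound \(\tr(PL(T))\le d+1+(2q-1)\) on the part of \(P\) meeting \(\mathcal W\). Alternatively, I would exploit that \(\tr(PL(T))-e(T)\le 2q\) always and combine the two bounds by a convex-combination argument in \(w\). If that fails, the fallback is to return to the block computation of Lemma~\ref{lem:tilted-star}: replace \(m=2q+1\) by the smaller tilt \(q+\Delta\) in \(M_T=L(T)-(q+\Delta)\Pi_{\mathcal W}\) and recompute the sum of the top \(2q\) eigenvalues for each center position. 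Since \(H\) gives strictly less than full eigenvalue \(2q+1\) on \(\mathcal U\) unless \(H=K_S\), I would check strictness separately in the case \(H=K_{2q+1}\), where the inequality is strict because a nonempty star always forces \(w>0\) or a deficit.
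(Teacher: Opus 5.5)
Your two per-projection estimates are correct and coincide with the paper's. With $w=\tr(P\Pi_{\mathcal W})$, you obtain $\tr(PL(H))-e(H)\le e(H)-(q+\Delta)w$, which is the paper's $e(H)-\beta$ with $\beta\ge(q+\Delta)\alpha$. You also obtain $\tr(PL(T))-e(T)\le(2q+1)w$ from Lemma~\ref{lem:tilted-star}.

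The gap is that you never close the argument. You call the large-$w$ regime ``the main obstacle'' and then list speculative routes without carrying any of them out: a complementary estimate, a convex combination, recomputing the tilted-star blocks with tilt $q+\Delta$, and a separate check for $H=K_S$. The misstep is that you tried the trivial star bound only inside plain subadditivity, i.e.\ against $\eps_{2q}(H)\le 2q^2+q$, which indeed gives only $2q^2+3q$. Your ``sharpening'' paragraph adds nothing. Since $\sum_{i\le 2q}\mu_i=2e(H)$, it restates the bound you already had.

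The missing step is to pair the trivial star bound $\tr(PL(T))-e(T)\le\eps_{2q}(T)\le 2q$ (Lemma~\ref{lem:star-bound}) with your \emph{sharpened} $H$-bound, not with $\eps_{2q}(H)$. For your maximizing $P$ this gives $\eps_{2q}(H\cup T)\le 2q^2+2q+\Delta-(q+\Delta)w$. That is strictly below $2q^2+2q$ as soon as $w>\Delta/(q+\Delta)$, so large $w$ is the easy regime, not the hard one.

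For $w\le\Delta/(q+\Delta)$, note that $q+1-\Delta\ge1$ because $\Delta\le q$ by Lemma~\ref{lem:dense-gap}. Your tilted bound then gives $2q^2+\Delta+(q+1-\Delta)w\le 2q^2+\Delta(2q+1)/(q+\Delta)$. This is $<2q^2+2q$ exactly when $\Delta<2q^2$, which holds since $\Delta\le q<2q^2$. Strictness in both regimes comes for free, so no separate treatment of $H=K_S$ and no new block computation are needed.

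This two-case combination is precisely the paper's argument. The paper bounds the star term by $\min\{2q,(2q+1)\alpha\}$ and splits according to whether $\beta=\tr((I-P)L(H))$ exceeds $\Delta$, which is equivalent to your split in $w$.
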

	
	\begin{proof}
		Write \(e(H)=2q^2+\Delta\). Then \(1\le\Delta\le q\).
		Let
		\(\mathcal U,\mathcal W\) be the subspaces associated with \(S\), so that
		\(\dim\mathcal U=2q\).
		Let \(P\in\R^{\Omega\times\Omega}\) be a rank-\(2q\) orthogonal projection, and
		set
		\[
		\alpha=\tr(P\Pi_{\mathcal W}),
		\qquad
		\beta=\tr((I-P)L(H)).
		\]
		Since \(P\) has rank \(2q\), and \(\Pi_{\mathcal U}\) is the orthogonal
		projection onto the \(2q\)-dimensional space \(\mathcal U\), we have
		$\tr P=\tr\Pi_{\mathcal U}=2q.$
		Together with \(\Pi_{\mathcal U}+\Pi_{\mathcal W}=I\), this gives
		\begin{equation}\label{eq:one-star-projection-trace}
			\alpha
			=\tr(P\Pi_{\mathcal W})=\tr P-\tr(P\Pi_{\mathcal U})
			=\tr((I-P)\Pi_{\mathcal U}).
		\end{equation}
		Let \(r_1,\ldots,r_{|\Omega|-2q}\) be an orthonormal basis of
		\(\operatorname{im}(I-P)\). Since \(I-P=\sum_i r_i r_i^{\mathsf T}\) and
		\(\Pi_{\mathcal U}\) is an orthogonal projection,
		\begin{equation}\label{eq:one-star-basis-trace}
			\sum_i\|\Pi_{\mathcal U}r_i\|^2
			=
			\sum_i r_i^{\mathsf T}\Pi_{\mathcal U}r_i
			=
			\tr((I-P)\Pi_{\mathcal U}).
		\end{equation}
		Combining \eqref{eq:one-star-projection-trace}--\eqref{eq:one-star-basis-trace} with  Lemma~\ref{lem:dense-gap}, we get
		\[
		\begin{aligned}
			\beta=\sum_i r_i^{\mathsf T}L(H)r_i 
			\ge (q+\Delta)\sum_i\|\Pi_{\mathcal U}r_i\|^2 
			=(q+\Delta)\alpha .
		\end{aligned}
		\]
		Also, since \(\tr L(H)=2e(H)\), we have
		\begin{equation}\label{eq:one-star-H-part}
			\tr(P L(H))-e(H)
			=
			e(H)-\beta
			=
			2q^2+\Delta-\beta .
		\end{equation}
		By Lemmas~\ref{lem:star-bound} and~\ref{lem:tilted-star},
		\begin{equation}\label{eq:one-star-T-part}
			\tr(P L(T))-e(T)
			\le
			\min\{2q,(2q+1)\alpha\}.
		\end{equation}
		
		We record the only numerical estimate needed below:
		\begin{equation}\label{eq:one-star-surplus}
			\Delta-\beta+\min\{2q,(2q+1)\alpha\}<2q .
		\end{equation}
		Indeed, if \(\beta>\Delta\), this is immediate.
		If \(\beta\le\Delta\), then
		\(\alpha\le \beta/(q+\Delta)\), and hence
		\[
		\begin{aligned}
			\Delta-\beta+\min\{2q,(2q+1)\alpha\}\le
			\Delta-\beta+\frac{(2q+1)\beta}{q+\Delta}
			<2q,
		\end{aligned}
		\]
		where we use \(1\le\Delta\le q\).
		Combining \eqref{eq:one-star-H-part}-\eqref{eq:one-star-surplus}, for every rank-\(2q\) orthogonal projection \(P\)
		we get
		\[
		\begin{aligned}
			\tr(P L(H\cup T))-e(H\cup T)
			=
			\tr(P L(H))-e(H)+\tr(P L(T))-e(T) <
			2q^2+2q .
		\end{aligned}
		\]
		Taking the maximum over all rank-\(2q\) orthogonal projections \(P\) and using
		Lemma~\ref{lem:kyfan}, we obtain $
		\eps_{2q}(H\cup T)<2q^2+2q.$
		
	\end{proof}

	We now apply the one-star estimate to the dense terminal packet. The minimality
	of the odd set cover ensures that every singleton cover vertex contributes at
	least one edge outside the packet, so one of these stars can be absorbed
	strictly.
	
	\begin{theorem}\label{thm:terminal}
		Let \(q\ge 1\), and let \(G=(V,E)\) have at least \(2q+2\) non-isolated
		vertices.
		Suppose that
		$\mathscr C=\{v_1,\ldots,v_r;S\}$
		is a minimum-weight odd set cover, with singleton odd sets left implicit,
		where \(r\ge 1\), \(|S|=2q+1\), and \(S\) is the only odd set of size greater
		than \(1\).
		If \(e(G[S])>2q^2\), then
		$\eps_{2q}(G)< 2q\nu.$
	\end{theorem}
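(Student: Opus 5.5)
We decompose $G$ along the cover and absorb one star into the dense packet. By Lemma~\ref{lem:edmonds}, $\nu=w(\mathscr C)=r+q$. So it suffices to show
\[
\eps_{2q}(G)<2q(r+q)=2q^2+2q+2q(r-1).
\]

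\textbf{Step 1: decompose $G$ into stars.} Orient the edges of $G_{\mathrm{vc}}$ as follows: every edge not inside $S$ meets $\{v_1,\ldots,v_r\}$, so assign it to one such endpoint, say the one of smallest index. This splits $G_{\mathrm{vc}}$ into edge-disjoint stars $T_1,\ldots,T_r$, where $T_j$ is centered at $v_j$. No $T_j$ has an edge inside $S$. We may also assume the edges of $G[S]$ that meet some $v_j$ are kept in the packet $H=G[S]$, since the cover is allowed to place them in $G_{\mathrm{odd}}$.

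\textbf{Step 2: every star is non-empty.} This is the one place minimality is used. If some $T_j$ were empty, every edge assigned to $v_j$ would be covered otherwise. Deleting $v_j$ from the cover would then give an odd set cover of weight $\nu-1$, contradicting Lemma~\ref{lem:edmonds}. We must be careful that the smallest-index assignment does not empty a star whose edges are all shared with earlier centers. To avoid this, we choose the assignment so that each $v_j$ receives at least one edge. Such an assignment exists by minimality: if every edge at $v_j$ outside $S$ were also incident to another cover vertex, removing $v_j$ would still cover everything, which is impossible. We first give each $v_j$ one private edge, which exists by this argument, and then distribute the remaining edges arbitrarily. In particular $T_1$ is a non-empty star.

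\textbf{Step 3: apply the bounds and compare.} By Lemma~\ref{lem:eps-subadditivity},
\[
\eps_{2q}(G)\le \eps_{2q}(H\cup T_1)+\sum_{j=2}^r\eps_{2q}(T_j),
\]
with all graphs viewed on $\Omega=V$. Lemma~\ref{lem:one-star} applies because $e(H)>2q^2$ and $T_1$ is a non-empty star with no edge inside $S$, giving $\eps_{2q}(H\cup T_1)<2q^2+2q$. Lemma~\ref{lem:star-bound} gives $\eps_{2q}(T_j)\le 2q$ for $j\ge2$. Summing,
\[
\eps_{2q}(G)<2q^2+2q+2q(r-1)=2q\nu.
\]

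The hypothesis of at least $2q+2$ non-isolated vertices ensures $2q\le |V|$, so Lemma~\ref{lem:eps-subadditivity} applies with $k=2q$. The main obstacle is Step 2. The subtle point is an edge $v_iv_j$ between two cover vertices, or an edge from $v_j$ into $S$. If all of $v_j$'s outside edges run to other cover vertices, then $v_j$ is redundant and minimality is contradicted. So a private edge exists, which settles Step 2.
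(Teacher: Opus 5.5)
Your proposal is correct and is essentially the paper's proof. You use minimality of the cover to give each $v_j$ a private edge not contained in $S$, so every star $T_j$ is nonempty. You then absorb $T_1$ into the dense packet via Lemma~\ref{lem:one-star}, bound the remaining stars by Lemma~\ref{lem:star-bound}, and conclude with Lemma~\ref{lem:eps-subadditivity} and $\nu=r+q$.
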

	\begin{proof}
		Put \(H=G[S]\). Since \(\mathscr C\) has minimum weight, no singleton \(v_i\)
		is redundant.
		Thus, for each \(i\), there is an edge in \(E\setminus E(H)\)
		covered by \(v_i\) and by no other singleton \(v_j\).
		Choose one such edge for
		each \(v_i\). Assign every remaining edge of \(E\setminus E(H)\) to exactly
		one of its covering endpoints among \(v_1,\ldots,v_r\), and let \(T_i\) be the
		star formed by the edges assigned to \(v_i\).
		Then \(T_1,\ldots,T_r\) are
		edge-disjoint nonempty stars, \(T_i\) is centered at \(v_i\), and no edge of any
		\(T_i\) is contained in \(S\).
		By Lemma~\ref{lem:one-star}, $
		\eps_{2q}(H\cup T_1)<2q^2+2q,$
		and, for \(2\le i\le r\), Lemma~\ref{lem:star-bound} gives
		$\eps_{2q}(T_i)\le 2q.$
		Since $
		G=(H\cup T_1)\cup T_2\cup\cdots\cup T_r$
		is an edge-disjoint union, Lemma~\ref{lem:eps-subadditivity} yields
		\begin{equation}\label{eq:terminal-eps-bound}
			\begin{aligned}[b]
				\eps_{2q}(G)
				&\le \eps_{2q}(H\cup T_1)+\sum_{i=2}^r\eps_{2q}(T_i) \\
				&< (2q^2+2q)+(r-1)2q = 2q(q+r).
			\end{aligned}
		\end{equation}
		On the other hand,
		\begin{equation}\label{eq:terminal-cover-weight}
			\nu(G)=w(\mathscr C)
			=
			r+\frac{|S|-1}{2}
			=
			r+q.
		\end{equation}
		Combining \eqref{eq:terminal-eps-bound} and \eqref{eq:terminal-cover-weight},
		we obtain $\eps_{2q}(G)<2q\nu(G).$
		
	\end{proof}

	\section{Proof of the main results}\label{sec:proof-main}
	
We now prove Theorem~\ref{thm:main} and track equality. The inequality follows from the reduction theorem except for the terminal one-packet configuration, which was handled in Section~\ref{sec:terminal}. The remaining issue is to identify when equality can persist through the auxiliary estimates. We first record the required equality information for Brouwer's bound and for the star estimates.	
	\begin{lemma}[{\upshape \cite{KothariTudoseBrouwer}}]\label{lem:ceq-brouwer}
		For every finite simple graph \(H\) and every integer \(k\ge1\),
		$\eps_k(H)\le \binom{k+1}{2}.$
	\end{lemma}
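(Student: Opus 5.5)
The statement is Brouwer's inequality, now a theorem of Kothari and Tudose \cite{KothariTudoseBrouwer}. I will not reprove it from scratch. I will quote their theorem for the range in which it is formulated and treat the remaining range of \(k\) directly, exactly as in the proof of Lemma~\ref{lem:k2}. The only point to check is that our definition of \(\eps_k\), which caps the summation index at \(\min\{k,|V(H)|\}\), matches their statement for \(k\le |V(H)|\) and extends harmlessly beyond it.

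\textbf{Step 1: the range \(1\le k\le |V(H)|\).} Here \(\eps_k(H)=\sum_{j=1}^k\lambda_j(L(H))-e(H)\) is precisely the quantity in Brouwer's conjecture as stated in Section~\ref{sec:introduction}. The bound \(\eps_k(H)\le\binom{k+1}{2}\) is then the content of \cite{KothariTudoseBrouwer}.

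\textbf{Step 2: the range \(k>|V(H)|\).} Set \(N=|V(H)|\). The sum runs over all \(N\) Laplacian eigenvalues, whose total is \(\tr L(H)=2e(H)\). Hence \(\eps_k(H)=2e(H)-e(H)=e(H)\). Since \(H\) is simple,
\[
\eps_k(H)=e(H)\le\binom{N}{2}<\binom{k+1}{2},
\]
where the last inequality uses \(N<k+1\). If \(N=0\), then \(\eps_k(H)=0\) and the claim is trivial.

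There is no real obstacle beyond correctly invoking the cited theorem. The one thing to watch is indexing conventions: Kothari and Tudose state the result for \(1\le k\le |V(H)|\), and Step 2 supplies the remaining values of \(k\). I expect that only the inequality itself will be used later, as an input to the equality analysis of Theorem~\ref{thm:main}. So no information about equality cases in Brouwer's bound needs to be extracted here.
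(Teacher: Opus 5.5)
Your proposal is correct and matches the paper, which gives no argument of its own for this lemma and simply cites Kothari and Tudose. Your Step~2, which handles $k>|V(H)|$ via $\eps_k(H)=e(H)\le\binom{|V(H)|}{2}<\binom{k+1}{2}$, is the extension to $k>|V(H)|$ that the paper leaves implicit here and carries out in exactly the same way in the proof of Lemma~\ref{lem:k2}.
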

	
	\begin{lemma}\label{lem:ceq-eps-one}
		For every graph \(H\),
		$\eps_1(H)\le 1.$
		Equality holds if and only if \(H\) is a nonempty star, possibly with
		isolated vertices added.
	\end{lemma}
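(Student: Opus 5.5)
The proof is a short, direct argument on the largest Laplacian eigenvalue. Here \(\eps_1(H)=\lambda_1(L(H))-e(H)\), so the statement says \(\lambda_1(L(H))\le e(H)+1\), with equality exactly for stars. We may assume \(H\) has at least one edge; otherwise \(\eps_1(H)=-0=0<1\), and the empty graph is not a nonempty star. Isolated vertices affect neither side, so we pass to \(H^+\).

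\textbf{Step 1: the inequality.} Take any edge \(uv\). Decompose \(H\) edge-disjointly into \(H\) itself, or, more cleanly, use the standard bound
\[
\lambda_1(L(H))\le\max_{uv\in E}\bigl(\deg u+\deg v\bigr).
\]
This follows from \(L=BB^{\mathsf T}\) with \(B\) the oriented incidence matrix. Then \(B^{\mathsf T}B\) has the same nonzero spectrum and row sums in absolute value at most \(\deg u+\deg v\). For an edge \(uv\), the edges incident with \(u\) or \(v\) number \(\deg u+\deg v-1\le e(H)\). Hence \(\lambda_1\le e(H)+1\), which gives \(\eps_1(H)\le1\). Alternatively, Lemma~\ref{lem:ceq-brouwer} with \(k=1\) gives the inequality at once. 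The incidence bound is still what we need for the equality analysis.

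\textbf{Step 2: the equality case.} Equality forces \(\deg u+\deg v-1=e(H)\) for some edge \(uv\), so every edge of \(H\) meets \(u\) or \(v\). The edges then form a \emph{double star} with centers \(u,v\), possibly plus triangles through common neighbours. More precisely, every edge meets \(\{u,v\}\), and by the count above there are no further edges.

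If both \(u\) and \(v\) have a neighbour other than each other, I would rule out equality directly. In that case \(\lambda_1<\deg u+\deg v\). The reason is that equality in the row-sum (Perron/Gershgorin) bound for the line-graph-type matrix \(|B|^{\mathsf T}|B|\) requires all row sums \(\deg x+\deg y\) to be equal across a connected component. It also requires the signless-versus-signed comparison to be tight, which forces bipartiteness. A triangle \(uvw\) breaks bipartiteness. A genuine double star has leaves with row sum \(\deg u+1<\deg u+\deg v\), so the row sums are not constant. Either way the inequality is strict.

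This leaves the case where \(u\) or \(v\) is a leaf, so \(H^+\) is a star. The converse is the computation in Lemma~\ref{lem:star-bound}: \(\lambda_1=e+1\), so \(\eps_1=1\).

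\textbf{Main obstacle.} The delicate point is the strictness claim in Step 2. I would handle it by passing to the signless Laplacian \(Q=D+A\). This works because \(\lambda_1(L)\le\lambda_1(Q)\), and \(\lambda_1(Q)\le\max(\deg u+\deg v)\) with equality only if \(H\) is regular or semiregular bipartite. A connected graph in which every edge meets \(\{u,v\}\) and which is semiregular bipartite must be a star. If that route becomes messy, a fallback is an explicit computation of the spectrum of the double star \(S_{a,b}\) and of the triangle-augmented variants, showing \(\lambda_1<a+b+2\).
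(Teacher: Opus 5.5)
Your proof is correct, but it takes a different route from the paper.

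\textbf{The paper's route.} It bounds $\lambda_1(L(H))$ by the order $h$ of a largest component (Lemma~\ref{lem:AM}) and then uses $h\le e(H_0)+1\le e(H)+1$. In the equality case, $H_0$ is the unique nontrivial component and is a tree with $\lambda_1(L(H_0))=h$. By the complement identity, $\overline{H_0}$ is then disconnected. A tree containing the complete bipartite graph between the two sides of that disconnection must be a star.

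\textbf{Your route.} You use the Anderson--Morley edge bound $\lambda_1\le\max_{xy\in E}(\deg x+\deg y)$ together with the count $\deg u+\deg v-1\le e(H)$. Equality then immediately confines $H^+$ to a connected graph all of whose edges meet $\{u,v\}$. The equality analysis becomes the question of when the Anderson--Morley bound is tight. Your dichotomy there is exhaustive:
\begin{itemize}
\item if $u$ and $v$ have a common neighbour, $H^+$ contains a triangle and $\lambda_1(L)<\lambda_1(Q)$;
\item otherwise $H^+$ is a double star with leaves at both centres, and the row sums $\deg x+\deg y$ of the irreducible matrix $|B|^{\mathsf T}|B|$ are not constant.
\end{itemize}
The bipartiteness clause is genuinely needed. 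For $K_3$ every row sum equals $4=e(K_3)+1$, so the row-sum criterion alone does not exclude it. In your ``main obstacle'' paragraph you only mention the semiregular bipartite alternative. The regular alternative, namely $K_2$ or $K_3$, is handled by the same $L$ versus $Q$ comparison, which you do state earlier.

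\textbf{Trade-off.} The paper's argument stays within its own toolkit (Lemma~\ref{lem:AM} and Lemma~\ref{lem:complement}), and its equality step is purely combinatorial. Yours localizes the extremal structure to the neighbourhood of a single edge. In exchange, it imports two external facts that you would need to cite or prove:
\begin{itemize}
\item the Perron--Frobenius equality case: an irreducible nonnegative matrix has spectral radius equal to its maximal row sum only if all row sums are equal (irreducibility holds here because $H^+$, and hence its line graph, is connected);
\item the fact that $\lambda_1(L)=\lambda_1(Q)$ for a connected graph forces bipartiteness.
\end{itemize}
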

	
	\begin{proof}
		If \(H\) has no edges, then \(\eps_1(H)=0\). Assume that \(H\) has an edge.
		Let \(H_0\) be a largest nontrivial component of \(H\), and put
		\(h=|V(H_0)|\).
		By Lemma~\ref{lem:AM}, applied componentwise,
		$\lambda_1(L(H))\le h.$
		Since \(H_0\) is connected, \(e(H_0)\ge h-1\). Hence
		\begin{equation}\label{eq:eps-one-bound}
			\eps_1(H)
			=
			\lambda_1(L(H))-e(H)
			\le
			h-e(H)
			\le
			h-e(H_0)
			\le
			1.
		\end{equation}
		
		Suppose equality holds. Then all inequalities in
		\eqref{eq:eps-one-bound} are equalities.
		Thus
		$e(H)=e(H_0)=h-1,$
		so \(H_0\) is the only nontrivial component of \(H\), and \(H_0\) is a tree.
		Also \(\lambda_1(L(H_0))=h\).
		By the eigenvalue correspondence in the proof of
		Lemma~\ref{lem:complement}, this gives a zero eigenvalue of
		\(L(\overline{H_0})\) on \(\one^\perp\). Hence
		\(\overline{H_0}\) is disconnected.
		Let \(A,B\) be a nonempty partition of \(V(H_0)\) with no edges of
		\(\overline{H_0}\) between \(A\) and \(B\).
		Then all edges between \(A\) and
		\(B\) belong to \(H_0\). Since \(H_0\) is a tree, the complete bipartite
		graph between \(A\) and \(B\) is acyclic.
		Hence \(\min\{|A|,|B|\}=1\).
		These cross edges already contribute \(h-1\) edges, and \(e(H_0)=h-1\).
		Therefore \(H_0\) is a star.
		Conversely, if \(H\) is \(K_{1,h-1}\), possibly with isolated vertices
		added, then $\eps_1(H)=1.$
	\end{proof}
Building on these extremal properties for $k=1$, we can now sharply characterize the equality cases for a single odd packet.
	
	\begin{lemma}
		\label{lem:ceq-odd-packet-equality}
		Let \(H\) be a graph on a fixed set \(S\) of size \(2q+1\), where
		\(q\ge 1\).
		Let \(k\ge 1\) and \(k\ne 2q\). Then
		$\eps_k(H)\le kq.$
		Moreover, equality holds only in the following cases:
		\begin{enumerate}[label=\textup{(\roman*)}]
			\item \(k=2q+1\), and \(H=K_S\), where \(K_S\) denotes  \(K_{|S|}\) on the vertex set \(S\).
			\item \(k=2q-1\), and \(\overline H\) is a nonempty star, possibly with
			isolated vertices added.
		\end{enumerate}
	\end{lemma}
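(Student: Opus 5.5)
The inequality \(\eps_k(H)\le kq\) for \(k\ne 2q\) is already contained in Lemma~\ref{lem:one-packet}, so the work lies in the equality analysis. I will revisit the case split of that proof, \(1\le k\le q\), \(q<k\le 2q-1\) and \(k=2q+1\), and in each case locate which estimates must be tight.

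\emph{Case \(1\le k\le q\).} The bound used is \(\eps_k(H)\le k^2\le kq\), with equality only if \(k=q\). To get strictness I will invoke Lemma~\ref{lem:ceq-brouwer}: \(\eps_k(H)\le\binom{k+1}{2}\), and \(\binom{k+1}{2}<kq\) unless \(k=1=q\). The leftover case \(k=q=1\) is \(k=2q-1\), so it belongs to the next case. There, \(|S|=3\) and \(\eps_1(H)\le 1\), with equality exactly when \(H\) is a star, by Lemma~\ref{lem:ceq-eps-one}. On three vertices, \(H\) is a star precisely when it is \(K_{1,2}\) or \(K_2\) plus an isolated vertex, which is exactly when \(\overline H\) is a nonempty star. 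This agrees with (ii).

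\emph{Case \(q<k\le 2q-1\).} Set \(\ell=2q-k\in[1,q-1]\). The complement identity gives \(\eps_k(H)=\eps_\ell(\overline H)+(2q+1)k-(2q+1)q\). Rather than bounding \(\eps_\ell(\overline H)\) by \(\ell^2\), I will bound it by \(\binom{\ell+1}{2}\) using Lemma~\ref{lem:ceq-brouwer}. The gap in \eqref{eq:one-packet-gap} then becomes
\[
(\ell-1)(q-\ell)+\ell^2-\binom{\ell+1}{2}=(\ell-1)(q-\ell)+\tfrac{\ell(\ell-1)}{2},
\]
which is strictly positive when \(\ell\ge2\). So equality forces \(\ell=1\), that is \(k=2q-1\), together with \(\eps_1(\overline H)=1\). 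By Lemma~\ref{lem:ceq-eps-one} this happens exactly when \(\overline H\) is a nonempty star on \(S\), possibly with isolated vertices added, which gives (ii).

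\emph{Case \(k=2q+1\).} Here \(\eps_k(H)=e(H)\le\binom{2q+1}{2}=kq\), with equality if and only if \(H=K_S\), which gives (i).

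I see no substantial obstacle. The one step needing care is the use of Brouwer's bound (Kothari--Tudose) in place of Lew's \(k^2\) bound, since that is what makes the equality strict, together with checking that the small case \(q=1\) fits the list. The statement only claims "only in the following cases", but the converse is also easy: both families attain equality by the computations above.
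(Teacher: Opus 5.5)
There is one real omission. The lemma covers every $k\ge 1$ with $k\ne 2q$, including $k>2q+1$. Your opening claim that the inequality is ``already contained in Lemma~\ref{lem:one-packet}'' does not hold there, because that lemma is stated only for $0\le k\le 2q+1$. Your case split ($1\le k\le q$, $q<k\le 2q-1$, $k=2q+1$) also never reaches $k>2q+1$. So in that range you have proved neither the inequality nor the absence of further equality cases. The fix is one line, and the paper includes it. For $k\ge 2q$ all nonzero Laplacian eigenvalues of $H$ are counted, so for $k>2q+1$ we get $\eps_k(H)=e(H)\le\binom{2q+1}{2}=(2q+1)q<kq$. The inequality is therefore strict there and no new equality cases arise.

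On the range $1\le k\le 2q+1$, $k\ne 2q$, your argument is correct and essentially the paper's. The one difference is this. The paper applies Lemma~\ref{lem:ceq-brouwer} directly to $H$ for all $1\le k\le 2q-2$, using that $\binom{k+1}{2}<kq$ holds exactly when $k\le 2q-2$. It reserves the complement identity and Lemma~\ref{lem:ceq-eps-one} for $k=2q-1$ alone. That avoids your detour through $\eps_\ell(\overline H)$ for $q<k\le 2q-2$, and it makes your separate check of $k=q=1$ unnecessary. Your detour is still valid, since the gap $(\ell-1)(q-\ell)+\ell(\ell-1)/2$ is indeed positive for $2\le \ell\le q-1$.
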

	
	\begin{proof}
		First let \(1\le k\le 2q-2\).
		By Lemma~\ref{lem:ceq-brouwer},
		$\eps_k(H)\le \binom{k+1}{2}<kq.$
		If \(k>2q+1\), then all nonzero Laplacian eigenvalues of \(H\) are included.
		Thus
		\[
		\eps_k(H)=e(H)\le \binom{2q+1}{2}<kq.
		\]
		
		If \(k=2q+1\), then again \(\eps_k(H)=e(H)\), and hence
		$\eps_k(H)\le \binom{2q+1}{2}=kq.$
		Equality holds if and only if \(H=K_S\).
		
		It remains to consider \(k=2q-1\).
		By Lemma~\ref{lem:complement} and  Lemma~\ref{lem:ceq-eps-one},
		\begin{equation*}
			\begin{aligned}[b]
				\eps_{2q-1}(H)=
				\eps_1(\overline H)+(2q+1)(2q-1)-\binom{2q+1}{2}\le (2q-1)q .
			\end{aligned}
		\end{equation*}
		Equality holds if and only if \(\eps_1(\overline H)=1\), equivalently,
		\(\overline H\) is a nonempty star, possibly with isolated vertices added.
	\end{proof}

	\begin{lemma}
		\label{lem:ceq-common-maximizer}
		Let \(G_1,\ldots,G_s\) be spanning graphs on a common vertex set \(\Omega\),
		with pairwise disjoint edge sets, and put \(G=G_1\cup\cdots\cup G_s\).
		If
		$\eps_k(G)=\sum_{i=1}^s\eps_k(G_i),$
		then there exists a rank-\(k\) orthogonal projection \(P\) on \(\R^\Omega\)
		such that $\tr(P L(G_i))-e(G_i)=\eps_k(G_i)$ for every $i.$
	\end{lemma}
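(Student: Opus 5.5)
Choose a rank-\(k\) orthogonal projection \(P\) attaining the Ky Fan maximum for \(L(G)\), and show that this single \(P\) is simultaneously optimal for every summand \(L(G_i)\). By Lemma~\ref{lem:kyfan} there is a rank-\(k\) orthogonal projection \(P\) on \(\R^\Omega\) with
\[
\sum_{j=1}^k\lambda_j(L(G))=\tr(PL(G)).
\]
Since the edge sets are pairwise disjoint and the \(G_i\) are spanning on \(\Omega\), we have \(L(G)=\sum_{i=1}^s L(G_i)\) and \(e(G)=\sum_{i=1}^s e(G_i)\). Therefore
\[
\eps_k(G)=\tr(PL(G))-e(G)=\sum_{i=1}^s\bigl(\tr(PL(G_i))-e(G_i)\bigr).
\]

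Next, apply Lemma~\ref{lem:kyfan} to each \(L(G_i)\). Since \(P\) is an admissible rank-\(k\) projection, each summand satisfies
\[
\tr(PL(G_i))-e(G_i)\le \eps_k(G_i).
\]
The hypothesis \(\eps_k(G)=\sum_i\eps_k(G_i)\) says that the sum of these \(s\) terms equals the sum of their upper bounds. A sum of termwise inequalities can be an equality only if every term is an equality. Hence \(\tr(PL(G_i))-e(G_i)=\eps_k(G_i)\) for every \(i\), as required.

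This is the argument behind Lemma~\ref{lem:eps-subadditivity}, run backwards, so no step is a real obstacle. The one point needing care is that \(\eps_k\) must be defined on \(\Omega\) with \(k\le|\Omega|\), so that the same projection space serves all \(G_i\). This holds because every \(G_i\) is spanning on \(\Omega\), and the vertex count only affects isolated vertices and hence only zero eigenvalues.
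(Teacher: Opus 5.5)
Your argument is correct and is exactly the paper's proof. You take a Ky Fan maximizer \(P\) for \(L(G)\), split \(L(G)=\sum_i L(G_i)\) and \(e(G)=\sum_i e(G_i)\), bound each term by \(\eps_k(G_i)\), and conclude termwise equality from the hypothesis.
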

	
	\begin{proof}
		By Lemma~\ref{lem:kyfan}, let \(P\) be a rank-\(k\) orthogonal projection
		attaining \(\eps_k(G)\).
		Since the edge sets are pairwise disjoint,
		\[
		L(G)=\sum_{i=1}^s L(G_i),
		\qquad
		e(G)=\sum_{i=1}^s e(G_i).
		\]
		Hence
		\begin{equation}\label{eq:common-maximizer-chain}
			\begin{aligned}[b]
				\eps_k(G)=
				\sum_{i=1}^s\bigl(\tr(P L(G_i))-e(G_i)\bigr)
				\le
				\sum_{i=1}^s\eps_k(G_i).
			\end{aligned}
		\end{equation}
		Since 	$\eps_k(G)=\sum_{i=1}^s\eps_k(G_i)$ and 
		$\tr(P L(G_i))-e(G_i)\le \eps_k(G_i)$ for every $i,$
		each term in \eqref{eq:common-maximizer-chain} must attain equality.
		Therefore
		$\tr(P L(G_i))-e(G_i)=\eps_k(G_i)$ for every $i.$
	\end{proof}
	To apply Lemma \ref{lem:ceq-common-maximizer} to stars, we introduce specific vectors associated with their eigenspaces. 
	For a non-empty star \(T\) with center \(v\) and leaf set \(Y\), set
	$z_T=|Y|\mathbf e_v-\sum_{y\in Y}\mathbf e_y.$
	
	\begin{lemma}
		\label{lem:ceq-star-maximizer}
		Let \(T\) be a nonempty star on \(\Omega\), with center \(v\) and leaf set
		\(Y\), and let \(k\ge 1\).
		If \(\eps_k(T)=k\), then \(|Y|\ge k\).
		Moreover, if a rank-\(k\) orthogonal projection \(P\) satisfies
		$\tr(P L(T))-e(T)=k,$
		then $z_T\in\operatorname{im}P\subseteq\mathcal R_{\{v\}\cup Y}.$
	\end{lemma}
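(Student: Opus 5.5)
The plan is to work directly with the explicit Laplacian spectrum of the star and then use the equality case of Lemma~\ref{lem:kyfan}. Put \(d=|Y|=e(T)\ge1\). The nonzero Laplacian eigenvalues of \(T\) on \(\Omega\) are \(d+1\), with eigenvector \(z_T\), and \(1\) with multiplicity \(d-1\), with eigenspace \(\mathcal R_Y\). The eigenvalue \(0\) has eigenspace spanned by \(\one_{\{v\}\cup Y}\) together with all \(\mathbf e_w\) for \(w\notin\{v\}\cup Y\).

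\emph{First claim.} For \(k\ge 1\), the sum of the \(k\) largest eigenvalues is
\[
(d+1)+\min\{k-1,d-1\}.
\]
Hence \(\eps_k(T)=1+\min\{k-1,d-1\}=\min\{k,d\}\). So \(\eps_k(T)=k\) forces \(d\ge k\), i.e.\ \(|Y|\ge k\). This is a one-line computation.

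\emph{Second claim.} Suppose \(P\) has rank \(k\) and \(\tr(PL(T))-e(T)=k\). Then \(\tr(PL(T))=\sum_{j\le k}\lambda_j(L(T))\), so \(P\) is a maximizer, and by the first part \(d\ge k\). Apply the equality characterization in Lemma~\ref{lem:kyfan}; the case split is on \(\lambda_k(L(T))\).

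\begin{enumerate}[label=\textup{(\alph*)}]
\item If \(k=1\), then \(\lambda_1=d+1\) and \(\operatorname{im}P\) must lie in \(\ker(L(T)-(d+1)I)=\operatorname{span}\{z_T\}\). Since the rank is \(1\), \(\operatorname{im}P=\operatorname{span}\{z_T\}\), and \(z_T\) is supported on \(\{v\}\cup Y\) with coordinate sum \(0\).
\item If \(2\le k\le d\), then \(\lambda_k=1\), and this value is strictly larger than \(0\). Lemma~\ref{lem:kyfan} then gives
\[
\operatorname{span}\{z_T\}\subseteq \operatorname{im}P\subseteq \operatorname{span}\{z_T\}\oplus\mathcal R_Y .
\]
\end{enumerate}

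In both cases it remains to check that \(\operatorname{span}\{z_T\}\oplus\mathcal R_Y\subseteq\mathcal R_{\{v\}\cup Y}\). This holds because \(z_T\) and every vector of \(\mathcal R_Y\) are supported in \(\{v\}\cup Y\) and have zero coordinate sum. (In fact, dimension count shows equality when \(k\le d\) is not forced, but only the inclusion is needed.)

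The only delicate point is making sure the strict inequality \(\lambda_k>\lambda_{k+1}\) or the boundary is handled correctly. The eigenvalue \(0\) must be excluded from \(\operatorname{im}P\), and this uses \(\lambda_k\ge 1>0\), which needs \(k\le d\). That is exactly why the first assertion \(|Y|\ge k\) is proved first. There is also a subcase \(d+1=1\), which is impossible since \(d\ge1\), so \(z_T\) always lies in a strictly top eigenspace and is forced into \(\operatorname{im}P\).
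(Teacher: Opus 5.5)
Your proof is correct and follows essentially the same route as the paper. Both compute the star's spectrum to get \(\eps_k(T)=\min\{k,|Y|\}\), then apply the equality case of Lemma~\ref{lem:kyfan} to obtain \(\operatorname{span}(z_T)\subseteq\operatorname{im}P\subseteq\operatorname{span}(z_T)\oplus\mathcal R_Y\subseteq\mathcal R_{\{v\}\cup Y}\). Your separate treatment of \(k=1\) is slightly more careful than the paper. The paper asserts \(\lambda_k(L(T))=1\) whenever \(|Y|\ge k\), which is false for \(k=1\), although the conclusion still holds there, as you show.
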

	
	\begin{proof}
		The nonzero Laplacian eigenvalues of \(T\) are
		\[
		|Y|+1,\qquad 1^{[|Y|-1]},
		\]
		with eigenspaces \(\operatorname{span}(z_T)\) and \(\mathcal R_Y\).
		Hence
		$\eps_k(T)=\min\{k,|Y|\}.$
		Thus \(\eps_k(T)=k\) gives \(|Y|\ge k\).
		
		Let \(P\) be a rank-\(k\) orthogonal projection satisfying
		$\tr(P L(T))-e(T)=k.$
		Then \(\tr(P L(T))=\sum_{j=1}^k\lambda_j(L(T))\).
		Since \(|Y|\ge k\), the \(k\)-th largest
		Laplacian eigenvalue of \(T\) is \(1\). By Lemma~\ref{lem:kyfan},
		\[
		\operatorname{span}(z_T)
		\subseteq
		\operatorname{im}P
		\subseteq
		\operatorname{span}(z_T)\oplus\mathcal R_Y
		=
		\mathcal R_{\{v\}\cup Y}.
		\]
		Therefore $z_T\in\operatorname{im}P\subseteq\mathcal R_{\{v\}\cup Y}.$
	\end{proof}

	\begin{lemma}
		\label{lem:ceq-two-stars-strict}
		Let \(T_1,\ldots,T_r\) be edge-disjoint stars on a common vertex set, with
		distinct prescribed centers, and put \(T=T_1\cup\cdots\cup T_r\).
		Then
		$\eps_k(T)\le rk$
		for every \(k\ge 1\). Equality holds if and only if \(r=1\) and \(T_1\) is a
		nonempty star with at least \(k\) leaves.
	\end{lemma}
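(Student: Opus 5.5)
The inequality comes directly from Lemma~\ref{lem:eps-subadditivity} combined with Lemma~\ref{lem:star-bound}. Since \(T=T_1\cup\cdots\cup T_r\) is an edge-disjoint union, we get \(\eps_k(T)\le\sum_i\eps_k(T_i)\le rk\). When a \(T_i\) is empty we have \(\eps_k(T_i)=0\), so the bound still holds. For the converse direction of the equality statement, suppose \(r=1\) and \(T_1\) has \(|Y|\ge k\) leaves. The computation in the proof of Lemma~\ref{lem:ceq-star-maximizer} gives \(\eps_k(T_1)=\min\{k,|Y|\}=k\), so equality holds.

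For the forward direction, assume \(\eps_k(T)=rk\). Equality must then hold in both steps of the chain. So every \(T_i\) satisfies \(\eps_k(T_i)=k\), and in particular every \(T_i\) is nonempty with at least \(k\) leaves by Lemma~\ref{lem:ceq-star-maximizer}. Also \(\eps_k(T)=\sum_i\eps_k(T_i)\). By Lemma~\ref{lem:ceq-common-maximizer} there is a single rank-\(k\) projection \(P\) with \(\tr(PL(T_i))-e(T_i)=k\) for all \(i\). Lemma~\ref{lem:ceq-star-maximizer} then gives \(z_{T_i}\in\operatorname{im}P\subseteq\mathcal R_{\{v_i\}\cup Y_i}\) for every \(i\).

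It remains to rule out \(r\ge2\). Take \(i\ne j\), so \(z_{T_j}\in\operatorname{im}P\subseteq\mathcal R_{\{v_i\}\cup Y_i}\). Hence \(\operatorname{supp}(z_{T_j})=\{v_j\}\cup Y_j\) is contained in \(\{v_i\}\cup Y_i\), and symmetrically the reverse containment holds. So the two stars have the same vertex set \(X\), and their centers differ. Then \(v_j\in Y_i\) and \(v_i\in Y_j\), which means the edge \(v_iv_j\) lies in both \(T_i\) and \(T_j\). This contradicts edge-disjointness, so \(r=1\).

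The only potential subtlety is the use of the support of \(z_{T_j}\). Each coordinate of \(z_{T_j}\) on \(\{v_j\}\cup Y_j\) is nonzero (the center entry is \(|Y_j|\ge k\ge1\) and each leaf entry is \(-1\)), and every vector of \(\mathcal R_X\) is supported in \(X\), so the containment of supports is valid. I do not expect any real obstacle; the argument is bookkeeping of the equality cases in the two cited lemmas.
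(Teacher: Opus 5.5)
Your proposal is correct and follows essentially the same route as the paper: you get the inequality from Lemmas~\ref{lem:eps-subadditivity} and~\ref{lem:star-bound}, and for $r\ge2$ you use a common maximizing projection (Lemma~\ref{lem:ceq-common-maximizer}) with the support containment of Lemma~\ref{lem:ceq-star-maximizer} to force $V(T_i)=V(T_j)$, so that the edge $v_iv_j$ lies in both stars. The $r=1$ case reduces to $\eps_k(T_1)=\min\{k,|Y|\}$, exactly as in the paper.
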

	
	\begin{proof}
		By Lemma~\ref{lem:eps-subadditivity} and Lemma~\ref{lem:star-bound},
		\[
		\eps_k(T)\le \sum_{i=1}^r \eps_k(T_i)\le rk.
		\]
		Suppose equality holds.
		Then
		$\eps_k(T)=\sum_{i=1}^r \eps_k(T_i)$ and $
		\eps_k(T_i)=k$ for every $i.$
		In particular, each \(T_i\) is nonempty.
		If \(r\ge 2\), Lemma~\ref{lem:ceq-common-maximizer}
		gives a rank-\(k\) orthogonal projection \(P\) maximizing \(T_i\) and \(T_j\)
		for any distinct \(i,j\).
		Write \(v_i,v_j\) for the centers and \(Y_i,Y_j\) for the leaf sets. By
		Lemma~\ref{lem:ceq-star-maximizer},
		\[
		z_{T_i}\in\operatorname{im}P\subseteq \mathcal R_{\{v_j\}\cup Y_j},
		\qquad
		z_{T_j}\in\operatorname{im}P\subseteq \mathcal R_{\{v_i\}\cup Y_i}.
		\]
		The vector \(z_{T_i}\) is nonzero at every vertex of \(T_i\).
		Since
		\(z_{T_i}\in \mathcal R_{\{v_j\}\cup Y_j}\), all these vertices must lie in
		\(\{v_j\}\cup Y_j=V(T_j)\). Hence \(V(T_i)\subseteq V(T_j)\). Similarly,
		\(V(T_j)\subseteq V(T_i)\). Hence \(V(T_i)=V(T_j)\).
		The centers are
		distinct, so the edge \(v_iv_j\) belongs to both stars, contradicting
		edge-disjointness. Thus \(r=1\).
		
		When \(r=1\), equality is exactly \(\eps_k(T_1)=k\).
		By
		Lemma~\ref{lem:ceq-star-maximizer}, this holds if and only if \(T_1\) has at
		least \(k\) leaves.
	\end{proof}
	
	\begin{lemma}
		\label{lem:ceq-complete-packet-star}
		Let \(S\subseteq\Omega\) have size \(k\ge 2\), and let \(T\) be a nonempty
		star with no edge contained in \(S\).
		If $
		\eps_k(K_S\cup T)=\eps_k(K_S)+k,$
		then the center of \(T\) lies outside \(S\), and every vertex of \(S\) is a
		leaf of \(T\).
	\end{lemma}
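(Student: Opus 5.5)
Since the edge sets of \(K_S\) and \(T\) are disjoint, Lemma~\ref{lem:eps-subadditivity} and Lemma~\ref{lem:star-bound} give \(\eps_k(K_S\cup T)\le\eps_k(K_S)+\eps_k(T)\le \eps_k(K_S)+k\). The hypothesis therefore forces equality in both steps: \(\eps_k(T)=k\), and \(\eps_k(K_S\cup T)=\eps_k(K_S)+\eps_k(T)\). By Lemma~\ref{lem:ceq-common-maximizer} there is a single rank-\(k\) orthogonal projection \(P\) on \(\R^\Omega\) that maximizes both pieces. First, \(\tr(PL(K_S))-e(K_S)=\eps_k(K_S)\). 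Second, \(\tr(PL(T))-e(T)=k\). Lemma~\ref{lem:ceq-star-maximizer} applied to the second gives \(z_T\in\operatorname{im}P\subseteq\mathcal R_{\{v\}\cup Y}\), where \(v\) is the center and \(Y\) the leaf set of \(T\).

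Next, I pin down \(\operatorname{im}P\) using the first condition. The spectrum of \(L(K_S)\) on \(\Omega\) is \(k^{[k-1]}\) on \(\mathcal R_S\), and \(0\) elsewhere. Since \(\dim\mathcal R_S=k-1<k\), the \(k\)-th largest eigenvalue is \(0\). Lemma~\ref{lem:kyfan} then says that \(P\) is maximizing if and only if \(\mathcal R_S\subseteq\operatorname{im}P\). As \(\operatorname{rank}P=k\), this means \(\operatorname{im}P=\mathcal R_S\oplus\operatorname{span}\{u\}\) for some unit vector \(u\perp\mathcal R_S\).

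Combining the two conditions, we get \(\mathcal R_S\subseteq\operatorname{im}P\subseteq\mathcal R_{\{v\}\cup Y}\). Since \(k\ge2\), \(\mathcal R_S\) contains vectors that are nonzero at any prescribed vertex of \(S\); for example \(\mathbf e_a-\mathbf e_b\) with \(a,b\in S\). Every such vector must have support inside \(\{v\}\cup Y\), so \(S\subseteq V(T)=\{v\}\cup Y\).

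It remains to rule out \(v\in S\). If \(v\in S\), then every leaf \(y\) in \(S\) gives an edge \(vy\) contained in \(S\), which is forbidden. Hence \(Y\cap S=\varnothing\), and so \(S\subseteq\{v\}\), contradicting \(|S|=k\ge2\). Therefore \(v\notin S\), and \(S\subseteq Y\): every vertex of \(S\) is a leaf, as claimed.

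I expect the main obstacle to be the equality characterization of Lemma~\ref{lem:kyfan} applied to \(L(K_S)\). The key point is that the eigenvalue \(k\) has multiplicity exactly \(k-1\), so the strictly-larger-eigenspace \(\mathcal R_S\) is forced into \(\operatorname{im}P\). Once that is in place, the rest is support bookkeeping. No use of \(z_T\in\operatorname{im}P\) is needed beyond the inclusion \(\operatorname{im}P\subseteq\mathcal R_{\{v\}\cup Y}\).
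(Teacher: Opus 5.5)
Your proposal is correct and follows essentially the same route as the paper: subadditivity with the star bound forces a common maximizing projection, the Ky Fan equality condition together with Lemma~\ref{lem:ceq-star-maximizer} gives $\mathcal R_S\subseteq\operatorname{im}P\subseteq\mathcal R_{\{v\}\cup Y}$, and then the same $\mathbf e_a-\mathbf e_b$ support argument and the no-edge-in-$S$ condition finish. Your explicit observation that $\lambda_k(L(K_S))=0$, so that $\mathcal R_S$ is exactly the eigenspace for eigenvalues strictly above $\lambda_k$, only spells out what the paper leaves implicit.
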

	\begin{proof}
		By Lemmas~\ref{lem:eps-subadditivity} and~\ref{lem:star-bound},
		\[
		\eps_k(K_S\cup T)
		\le
		\eps_k(K_S)+\eps_k(T)
		\le
		\eps_k(K_S)+k.
		\]
		Hence equality forces \(\eps_k(T)=k\) and $\eps_k(K_S\cup T)
		=\eps_k(K_S)+\eps_k(T)$.
		By Lemma~\ref{lem:ceq-common-maximizer}, there is a
		rank-\(k\) orthogonal projection \(P\) such that \(P\) maximizes both
		\(K_S\) and \(T\).
		Write \(v\) for the center of \(T\) and \(Y\) for its leaf set.
		For \(K_S\), the eigenspace with positive eigenvalue \(k\) is
		\[
		\mathcal U
		=
		\left\{
		x\in\R^\Omega:
		\operatorname{supp}(x)\subseteq S,\ 
		\sum_{w\in S}x_w=0
		\right\}.
		\]
		Since \(P\) maximizes \(K_S\), Lemma~\ref{lem:kyfan} gives
		$\mathcal U\subseteq\operatorname{im}P.$
		Since \(P\) maximizes \(T\), Lemma~\ref{lem:ceq-star-maximizer} gives
		$\operatorname{im}P\subseteq\mathcal R_{\{v\}\cup Y}.$
		Thus
		\begin{equation}\label{eq:complete-packet-star-containment}
			\mathcal U\subseteq\mathcal R_{\{v\}\cup Y}.
		\end{equation}
		
		We claim that \(S\subseteq \{v\}\cup Y\). If not, choose
		\(u\in S\setminus(\{v\}\cup Y)\) and \(w\in S\setminus\{u\}\).
		Then
		\(\mathbf e_u-\mathbf e_w\in\mathcal U\), but
		\(\mathbf e_u-\mathbf e_w\notin\mathcal R_{\{v\}\cup Y}\), contradicting
		\eqref{eq:complete-packet-star-containment}. Hence \(S\subseteq V(T)\).
		If \(v\in S\), then some vertex of \(S\setminus\{v\}\) is a leaf of \(T\),
		because \(k\ge 2\).
		This gives an edge of \(T\) contained in \(S\), contrary
		to the assumption. Therefore \(v\notin S\).
		Since \(S\subseteq\{v\}\cup Y\),
		every vertex of \(S\) lies in \(Y\).
	\end{proof}
	
	\begin{lemma}
		\label{lem:ceq-codim-two-strict}
		Let \(S\subseteq\Omega\) have size \(2q+1\), where \(q\ge 1\).
		Let \(H\)
		be a graph on \(S\) such that \(\overline H\) is a nonempty star, possibly
		with isolated vertices added.
		If \(T\) is a nonempty star with no edge
		contained in \(S\), then
		$\eps_{2q-1}(H\cup T)<\eps_{2q-1}(H)+2q-1.$
	\end{lemma}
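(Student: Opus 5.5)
The strategy is to argue by contradiction and show that equality would force a common maximizing projection whose image is incompatible with $T$. Throughout put $k=2q-1\ge1$.

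\emph{Reduction to a common maximizer.} By Lemmas~\ref{lem:eps-subadditivity} and~\ref{lem:star-bound},
\[
\eps_{k}(H\cup T)\le \eps_{k}(H)+\eps_{k}(T)\le \eps_{k}(H)+k .
\]
Suppose, for contradiction, that equality holds throughout. Then
\[
\eps_k(T)=k
\qquad\text{and}\qquad
\eps_k(H\cup T)=\eps_k(H)+\eps_k(T).
\]
Lemma~\ref{lem:ceq-common-maximizer} then gives a rank-$k$ orthogonal projection $P$ on $\R^\Omega$ that simultaneously maximizes $\tr(PL(H))-e(H)$ and $\tr(PL(T))-e(T)$. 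Since $P$ maximizes for $T$, Lemma~\ref{lem:ceq-star-maximizer} gives $z_T\in\operatorname{im}P$.

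\emph{Locating $\operatorname{im}P$ via the spectrum of $H$.} Let $\overline H$ be the star on $S$ with center $u$ and $t$ leaves, where $1\le t\le 2q$. By the eigenvalue correspondence in the proof of Lemma~\ref{lem:complement}, the Laplacian eigenvalues of $H$ on $\mathcal U=\mathcal R_S$ are
\[
(2q+1)^{[2q-t]},\qquad (2q)^{[t-1]},\qquad 2q-t .
\]
The value $2q-t$ is attained on $\Pi_{\mathcal U}z_{\overline H}$. Every vector orthogonal to $\mathcal U$ is annihilated by $L(H)$, since it is constant on $S$ and all edges of $H$ lie inside $S$. Consequently
\[
\lambda_{k}(L(H))=\lambda_{2q-1}(L(H))\ge 2q>0 .
\]
Lemma~\ref{lem:kyfan} therefore places $\operatorname{im}P$ inside the sum of the eigenspaces of $L(H)$ with eigenvalue at least $\lambda_k(L(H))>0$. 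Each such eigenvector lies in $\mathcal U$, because the eigenvalue $0$ accounts for all of $\mathcal U^\perp$. Hence
\[
\operatorname{im}P\subseteq\mathcal U .
\]
In fact $\operatorname{im}P=\mathcal U\cap z_{\overline H}^{\perp}$, but only the inclusion is needed.

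\emph{Contradiction.} Combining the two facts gives $z_T\in\mathcal U$, so $\operatorname{supp}(z_T)\subseteq S$. The vector $z_T$ is nonzero at the center and at every leaf of $T$, so $\operatorname{supp}(z_T)=V(T)$. Thus $V(T)\subseteq S$. Because $T$ is nonempty, any edge of $T$ is then contained in $S$, contradicting the hypothesis. Hence the inequality is strict.

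The only step needing care is the spectral bookkeeping in the second step. It relies on the positivity of $\lambda_{2q-1}(L(H))$ and on the fact that all positive-eigenvalue eigenvectors of $L(H)$ lie in $\mathcal U$. Once $\operatorname{im}P\subseteq\mathcal U$ is established, the conclusion is immediate.
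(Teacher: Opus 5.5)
Your proof is correct and follows the paper's argument: subadditivity plus the star bound, a common maximizing projection from Lemma~\ref{lem:ceq-common-maximizer}, $z_T\in\operatorname{im}P$ from Lemma~\ref{lem:ceq-star-maximizer}, and a contradiction because $\operatorname{im}P$ is supported on $S$ while $z_T$ is not. The only difference is that the paper identifies the unique maximizing subspace $\mathcal U\cap\operatorname{span}(z_{\overline H})^\perp$. You instead note that the weaker inclusion $\operatorname{im}P\subseteq\mathcal U$, which follows from $\lambda_{2q-1}(L(H))\ge 2q>0$, already suffices.
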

	
	\begin{proof}
		Write \(v'\) for the center of the star component of \(\overline H\), write
		\(Y'\) for its leaf set, and put \(d=|Y'|\).
		Set
		$z_{\overline H}=d\mathbf e_{v'}-\sum_{y\in Y'}\mathbf e_y.$
		On
		\[
		\mathcal U
		=
		\left\{
		x\in\R^\Omega:
		\operatorname{supp}(x)\subseteq S,\ 
		\sum_{w\in S}x_w=0
		\right\},
		\]
		the  identity $L(H)+L(\overline H)=L(K_S)=(2q+1)I-J$ gives
		\begin{equation}\label{eq:codim-two-complement}
			L(H)x=(2q+1)x-L(\overline H)x .
		\end{equation}
		The vector \(z_{\overline H}\) is the top eigenvector of
		\(L(\overline H)\) on \(\mathcal U\), with eigenvalue \(d+1\).
		All other
		eigenvalues of \(L(\overline H)\) on \(\mathcal U\) are at most \(1\).
		Hence
		the unique maximizing subspace for \(\eps_{2q-1}(H)\) is
		\begin{equation}\label{eq:codim-two-max-subspace}
			\mathcal M_H
			=
			\mathcal U\cap\operatorname{span}(z_{\overline H})^\perp .
		\end{equation}
		Thus every rank-\((2q-1)\) projection maximizing \(H\) has image
		\(\mathcal M_H\).
		In particular, this image is supported on \(S\).
		By Lemmas~\ref{lem:eps-subadditivity} and~\ref{lem:star-bound},
		\[
		\eps_{2q-1}(H\cup T)
		\le
		\eps_{2q-1}(H)+\eps_{2q-1}(T)
		\le
		\eps_{2q-1}(H)+2q-1.
		\]
		It remains to rule out equality.
		Suppose that
		$\eps_{2q-1}(H\cup T)=\eps_{2q-1}(H)+2q-1.$
		Then $\eps_{2q-1}(H\cup T)
		=
		\eps_{2q-1}(H)+\eps_{2q-1}(T)$ and
		$\eps_{2q-1}(T)=2q-1.$
		Lemma~\ref{lem:ceq-common-maximizer} gives a rank-\((2q-1)\) orthogonal
		projection \(P\) maximizing both \(H\) and \(T\).
		The uniqueness of the maximizing
		subspace for \(H\) and Lemma~\ref{lem:ceq-star-maximizer} give
		\begin{equation}\label{eq:codim-two-contradiction}
			z_T\in\operatorname{im}P=\mathcal M_H.
		\end{equation}
		But \(T\) has no edge contained in \(S\), so \(z_T\) has a nonzero coordinate
		outside \(S\). Hence \(z_T\notin\mathcal M_H\), contradicting \eqref{eq:codim-two-contradiction}.
	\end{proof}
	
	\begin{proof}[\textup{\textbf{Proof of Theorem~\ref{thm:main}}}]
		Deleting isolated vertices does not change \(e(G)\), \(n\), \(\nu\), or
		\(\eps_k(G)\).
		Thus we may assume that \(G\) has no isolated vertices. Then
		\(|V|=n\) and \(k\le n-2\).
		Let $\mathscr C=\{v_1,\ldots,v_r;S_1,\ldots,S_c\}$
		be a minimum-weight odd set cover, with singleton odd sets left implicit, and
		write \(|S_i|=2q_i+1\ge 3\).
		If \(c\ne 1\), or if \(c=1\) and
		\(k\ne 2q_1\), Theorem~\ref{thm:reduction} gives the desired inequality.
		
		It remains to consider \(c=1\) and \(k=2q_1\).
		Put
		\[
		q=q_1,\qquad S=S_1,\qquad H=G[S].
		\]
		Then \(|S|=k+1=2q+1\). If \(e(H)\le kq\), then \(H\) is supported on
		\(k+1\) vertices, so \(\eps_k(H)=e(H)\le kq\).
		All edges outside \(H\) meet
		\(\{v_1,\ldots,v_r\}\), and hence form a graph with vertex cover of size at
		most \(r\).
		By Lemmas~\ref{lem:eps-subadditivity},~\ref{lem:cover}, and \ref{lem:edmonds},
		$\eps_k(G)\le kq+kr=k(q+r)=k\nu.$
		
		If \(e(H)>kq=k^2/2\), then \(r\ge 1\); otherwise all edges, and hence all
		vertices, would lie in \(S\), contradicting \(k\le n-2\).
		Thus
		Theorem~\ref{thm:terminal} gives
		$\eps_k(G)=\eps_{2q}(G)<2q\nu=k\nu.$
		This proves the inequality.
		
		We now characterize equality. Since isolated vertices are irrelevant, it is
		enough to classify the no-isolated core.
		Assume
		$\eps_k(G)=k\nu.$
		
		Let $\mathscr C=\{v_1,\ldots,v_r;S_1,\ldots,S_c\}$
		be a minimum-weight odd set cover, with singleton odd sets discarded, and
		write
		\[
		|S_i|=2q_i+1\ge 3\ (1\le i\le c),
		\qquad
		q_*=\sum_{i=1}^c q_i.
		\]
		Use the decomposition
		$G=G_{\mathrm{odd}}\cup G_{\mathrm{vc}},$
		and decompose \(G_{\mathrm{vc}}\) into edge-disjoint stars centered at
		\(v_1,\ldots,v_r\). By Lemma~\ref{lem:edmonds},
		$\nu=q_*+r.$ 
		
		We first show  \(c=1\).
		Suppose that  \(c\ge 2\). Then Lemma~\ref{lem:many-packets} gives
		$\eps_k(G_{\mathrm{odd}})<kq_*,$
		while Lemma~\ref{lem:cover} gives \(\eps_k(G_{\mathrm{vc}})\le kr\). Hence
		\[
		\eps_k(G)
		\le
		\eps_k(G_{\mathrm{odd}})+\eps_k(G_{\mathrm{vc}})
		<
		k(q_*+r)
		=
		k\nu,
		\]
		a contradiction.

		If \(c=0\), then \(G=G_{\mathrm{vc}}\) and \(\nu=r\).
		By
		Lemma~\ref{lem:ceq-two-stars-strict}, equality in \(\eps_k(G)\le kr\)
		forces \(r=1\), and \(G\) is a nonempty star with at least \(k\) leaves.
		Since \(G\) has no isolated vertices, $G\cong K_{1,n-1}.$
		
		It remains to consider \(c=1\). Write
		\[
		S=S_1,\qquad |S|=2q+1,\qquad H=G[S],
		\]
		so \(\nu=q+r\).
		
		First suppose \(k=2q\).
		Since \(|S|=k+1\le n-1\) and \(G\) has no isolated
		vertices, \(G_{\mathrm{vc}}\) is nonempty. If \(e(H)<kq\), then
		\[
		\eps_k(G)
		\le
		\eps_k(H)+\eps_k(G_{\mathrm{vc}})
		<
		kq+kr
		=
		k\nu,
		\]
		a contradiction.
		If \(e(H)>kq\), Theorem~\ref{thm:terminal} gives
		\(\eps_k(G)<k\nu\), again a contradiction. If  \(e(H)=kq\), then \(H\) is connected;
		otherwise
		$e(H)\le \binom{2q}{2}<2q^2=kq.$
		Hence \(L(H)\) has exactly \(k\) positive eigenvalues, and its positive
		eigenspace is
		\[
		\mathcal U
		=
		\left\{
		x\in\R^\Omega:
		\operatorname{supp}(x)\subseteq S,\ 
		\sum_{w\in S}x_w=0
		\right\}.
		\]
		Equality in
		\[
		\eps_k(G)
		\le
		\eps_k(H)+\eps_k(G_{\mathrm{vc}})
		\le
		kq+kr
		=
		k\nu
		\]
		forces equality throughout. In particular,
		$\eps_k(G_{\mathrm{vc}})=kr.$
		Since \(G_{\mathrm{vc}}\) is the union of the prescribed stars centered at
		\(v_1,\ldots,v_r\), Lemma~\ref{lem:ceq-two-stars-strict} gives \(r=1\).
		Thus \(G_{\mathrm{vc}}\) itself is a nonempty prescribed star \(T\).
		Lemma~\ref{lem:ceq-common-maximizer} gives a rank-\(k\) projection \(P\)
		maximizing both \(H\) and \(T\).
		Lemmas~\ref{lem:kyfan} and \ref{lem:ceq-star-maximizer} give
		\begin{equation}\label{eq:critical-packet-star-contradiction}
			z_T\in\operatorname{im}P=\mathcal U.
		\end{equation}
		But \(T\) has no edge contained in \(S\), so
		\(z_T\) has a nonzero coordinate outside \(S\), contradicting \eqref{eq:critical-packet-star-contradiction}.
		Therefore equality is impossible when \(k=2q\).
		
		If \(k>2q+1\), then
		$\eps_k(H)=e(H)\le \binom{2q+1}{2}<kq,$
		so equality is impossible. Thus \(1\le k\le 2q+1\) and \(k\ne 2q\).
		Equality in
		\begin{equation}\label{eq:main-one-packet-chain}
			\eps_k(G)\le
			\eps_k(H)+\eps_k(G_{\mathrm{vc}})
			\le
			kq+kr
			=
			k\nu
		\end{equation}
		forces equality throughout. In particular, \(\eps_k(H)=kq\). By
		Lemma~\ref{lem:ceq-odd-packet-equality}, there are two cases.
		
		First let \(k=2q+1=|S|\) and \(H=K_S\).
		Equality in
		\eqref{eq:main-one-packet-chain} gives \(\eps_k(G_{\mathrm{vc}})=kr\), so
		Lemma~\ref{lem:ceq-two-stars-strict} gives \(r=1\). Thus
		\(G_{\mathrm{vc}}=T\) is a nonempty star.
		Also
		$\eps_k(K_S\cup T)=\eps_k(K_S)+k.$
		Since \(T\) has no edge contained in \(S\), Lemma~\ref{lem:ceq-complete-packet-star}
		implies that the center of \(T\) lies outside \(S\), and every vertex of
		\(S\) is a leaf.
		Since \(G\) has no isolated vertices, every vertex outside
		\(S\) other than the center is also a leaf.
		Hence
		$G\cong
		K_1\vee\bigl(K_k\cup\overline{K_{n-k-1}}\bigr)
		=
		F_{k,n-k-1}.$
		
		Second let \(k=2q-1=|S|-2\), and let \(\overline H\) be a nonempty star,
		possibly with isolated vertices added.
		If \(G_{\mathrm{vc}}\) is nonempty,
		choose a nonempty prescribed star \(T\). By Lemma~\ref{lem:ceq-codim-two-strict},
		$\eps_k(H\cup T)<\eps_k(H)+k.$
		Adding the remaining stars by Lemmas~\ref{lem:eps-subadditivity}
		and~\ref{lem:star-bound}, we get
		\[
		\eps_k(G)<\eps_k(H)+rk=k(q+r)=k\nu,
		\]
		a contradiction.
		Hence \(G_{\mathrm{vc}}\) is empty. Then \(S\) covers all
		edges, so minimality gives \(r=0\). Since \(G\) has no isolated vertices,
		\(V=S\).
		Thus $G=H, n=|S|=k+2,$
		and \(\overline G\) is a nonempty star with isolated vertices.
		The deleted
		star has \(t\) edges with \(1\le t\le n-2\), so
		$G\cong K_n-E(K_{1,t})= D_{n,t}.$
		This proves necessity.
		
		Conversely, we check the listed graphs.
		For \(G=K_{1,n-1}\),
		$\operatorname{Spec}_L(G)=\left\{n,1^{[n-2]},0\right\}.$
		Thus, for \(1\le k\le n-2\),
		$\eps_k(G)=(n+k-1)-(n-1)=k=k\nu.$
		
		For $G=F_{k,n-k-1}$  with \(k\) odd, we have
		\[
		\operatorname{Spec}_L(G)=\left\{n,(k+1)^{[k-1]},1^{[s]},0\right\},
		\qquad
		e(G)=(n-1)+\binom{k}{2}, \qquad \nu=\frac{k+1}{2}.
		\]
		Hence
		$
		\eps_k(G)
		=
		n+(k-1)(k+1)-(n-1)-\binom{k}{2}
		=
		k\nu.$
		
		Finally, let $
		G=D_{n,t},$ where $
		n=2r+1,$ and $
		1\le t\le n-2,$
		and let \(k=n-2\). Then  \(\nu=r\) and  \(\eps_1(\overline G)=1\).
		By
		Lemma~\ref{lem:complement},
		$
		\eps_{n-2}(G)
		=
		1+n(n-2)-\binom n2
		=
		(n-2)\nu.$
		Thus every listed graph attains equality.
	\end{proof}

	\begin{remark}\textup{
			The listed families are not disjoint: \(F_{1,n-2}\cong K_{1,n-1}\), and
			\(F_{k,1}\cong D_{k+2,k}\) for odd \(k\);
			in particular,
			\(K_{1,2}\cong F_{1,1}\cong D_{3,1}\).}
	\end{remark}

	\begin{proof}[\textup{\textbf{Proof of Theorem~\ref{thm:endpoint-extension}}}]
		Deleting or adding isolated vertices does not change $e(G)$, $n(G)$, $\nu(G)$, or $\eps_k(G)$.
		Hence we may assume without loss of generality that $G$ has no isolated vertices.
		Then $G=G^+$, $n=|V|$, and we have
		\begin{equation}\label{eq:endpoint-eps}
			\eps_k(G)=e(G) \qquad (k\ge n-1).
		\end{equation}
		
		We first record the consequence of Lemma~\ref{lem:erdos-gallai-matching} that governs the endpoint range.
		Since $n\ge 2\nu$, the case $n=2\nu$ trivially yields $e(G)\le\binom{2\nu}{2}<n\nu$. If $n\ge 2\nu+1$, Lemma~\ref{lem:erdos-gallai-matching} gives
		\begin{equation}\label{eq:endpoint-eg-bound}
			e(G)\le
			\max\left\{
			\binom{2\nu+1}{2},
			\binom{\nu}{2}+\nu(n-\nu)
			\right\}
			\le n\nu.
		\end{equation}
		Moreover, equality in \eqref{eq:endpoint-eg-bound} occurs if and only if $n=2\nu+1$ and $G\cong K_{2\nu+1}$.
		
		First suppose $k\ge n$.
		By \eqref{eq:endpoint-eps} and \eqref{eq:endpoint-eg-bound}, we obtain
		$\eps_k(G)=e(G)\le n\nu\le k\nu.$
		Since $\nu\ge 1$, equality holds if and only if $k=n$ and $e(G)=n\nu$.
		By the preceding observation, this is equivalent to $k=n$ and $G\cong K_{2\nu+1}$, which proves the assertion for $k\ge n$.
		Now suppose $k=n-1$. The inequality \eqref{eq:endpoint-bound} reduces to $e(G)\le (n-1)\nu$.
		If this inequality fails, the upper bound in \eqref{eq:endpoint-eg-bound} must strictly exceed $(n-1)\nu$.
		Because the second  term is always at most $(n-1)\nu$, the first term must be the strict maximum, which forces $n=2\nu+1$.
		Consequently, $e(G)>2\nu^2=\binom{2\nu+1}{2}-\nu$, meaning $G\cong K_{2\nu+1}-F$ for some $F\subseteq E(K_{2\nu+1})$ with $|F|<\nu$. 
		
		Conversely, let $G\cong K_{2r+1}-F$ with $|F|<r$.
		If $r=1$, then $G\cong K_3$ and the inequality fails. If $r\ge 2$, assume for contradiction that $\nu(G)\le r-1$.
		Applying Lemma~\ref{lem:erdos-gallai-matching} with matching number $r-1$ gives
		\[
		e(G)\le
		\max\left\{
		\binom{2r-1}{2},
		\binom{r-1}{2}+(r-1)(r+2)
		\right\}<2r^2,
		\]
		which contradicts $e(G)=\binom{2r+1}{2}-|F|>2r^2$. Thus $\nu(G)=r$, yielding $\eps_{n-1}(G)=e(G)>2r^2=(n-1)\nu(G)$. This completely characterizes the failure cases.
		It remains to classify equality at $k=n-1$, corresponding to $e(G)=(n-1)\nu$. If $\nu=1$, every two edges intersect.
		Since $G$ lacks isolated vertices, $G$ is either a star or $K_3$.
		The latter satisfies $e(K_3)>2\nu(K_3)$, so equality uniquely identifies the stars.
		
	Now	assume $\nu\ge 2$. If $n=2\nu$, then $e(G)=(2\nu-1)\nu=\binom{2\nu}{2}$, forcing $G\cong K_{2\nu}$.
 If $n=2\nu+1$, then $e(G)=2\nu^2=\binom{2\nu+1}{2}-\nu$, forcing $G\cong K_{2\nu+1}-F$ with $|F|=\nu$.
 If $n\ge 2\nu+2$, write $n=2\nu+s$ with $s\ge2$. Lemma~\ref{lem:no-isolated-matching-extremal} gives
			\[
			e(G)\le
			\max\left\{
			\binom{2\nu}{2}+s,\,
			\binom{\nu}{2}+\nu(n-\nu)
			\right\}.
			\]
			Since $\nu\ge2$,
			$\binom{2\nu}{2}+s< (n-1)\nu$
			and $\binom{\nu}{2}+\nu(n-\nu)<(n-1)\nu.$
			Thus equality is impossible for $n\ge 2\nu+2$.
		
		Conversely, stars and the graphs $K_{2r}$ clearly satisfy $e(G)=(n-1)\nu(G)$.
		Finally, let $H=K_{2r+1}-F$ with $r\ge 2$ and $|F|=r$. Then $e(H)=2r^2$. If $\nu(H)\le r-1$, Lemma~\ref{lem:erdos-gallai-matching} yields $e(H)<2r^2$, a contradiction.
		Hence $\nu(H)=r$, confirming $e(H)=2r^2=(|V(H)|-1)\nu(H)$. Restoring the deleted isolated vertices yields the stated classification.
	\end{proof}

\end{document}